\numberwithin{equation}{section}
\newtheorem{theorem}{Theorem}[section]
\newtheorem{proposition}{Proposition}[section]
\newtheorem{lemma}{Lemma}[section]
\newtheorem{corollary}{Corollary}[section]
\newtheorem{definition}{Definition}[section]
\newtheorem{assumption}{Assumption}
\crefname{equation}{equation}{equations}
\crefname{assumption}{Assumption}{Assumptions}
\newcommand{\obs}{\mathbf{X}_n}
\newcommand{\Inner}[1]{\langle #1 \rangle}
\newcommand{\kset}{\mathcal{V}}
\newcommand{\klm}{v}
\newcommand{\basis}{\varphi}
\newcommand{\cbasis}{\theta}
\newcommand{\domain}{[0,1]}
\newcommand{\dclass}{\Sigma} 
\newcommand{\LP}[1]{\mathbb{L}_{#1}}
\newcommand{\smooth}{s}
\newcommand{\minsmooth}{\tilde{\smooth}}
\newcommand{\zygmund}{B_{\infty,\infty}^{\smooth}}
\renewcommand{\L}{L}
\newcommand{\dens}{p}                       
\newcommand{\pmes}{P}                       
\newcommand{\empmes}{\mathbb{P}}            
\newcommand{\refrate}{\varepsilon_n}                
\newcommand{\natrate}{\mathcal{E}} 
\newcommand{\minimaxrate}{\varepsilon_{n}^*(\smooth)} 
\newcommand{\sasrate}{\tilde{\varepsilon}_n^{*}(\smooth)}
\newcommand{\He}{\mathsf{H}}
\newcommand{\KL}{\mathsf{KL}}
\newcommand{\VKL}{\mathsf{V}}
\newcommand{\1}{\bm{1}}
\newcommand{\rSPF}{\SPF}
\newcommand{\rSPFC}{\SPFC}
\newcommand{\PF}{\bm{F}_I}
\newcommand{\PFalt}{\bm{F}_I'}
\newcommand{\PFC}{\bar{\bm{F}}_I}
\newcommand{\SPF}{S_{\PF}}
\newcommand{\SPFC}{S_{\PFC}}
\newcommand{\SPFalt}{S_{\PF'}}
\newcommand{\iid}{\overset{iid}{\sim}}
\newcommand{\ind}{\overset{ind}{\sim}}
\newcommand{\EE}{\mathbb{E}}
\newcommand{\PP}{\mathbb{P}}
\global\long\def\Reals{\mathbb{R}}
\global\long\def\Nats{\mathbb{N}}
\global\long\def\Ints{\mathbb{Z}}
\global\long\def\NNInts{\Ints_{+}}
\global\long\def\NNReals{\Reals_{+}}
\global\long\def\intd{\mathrm{d}} 
\newcommand{\Ind}{\boldsymbol{1}}
\newcommand{\supp}{\mathrm{supp}\,}
\newcommand{\lspan}{\mathrm{span}}
\newcommand{\consistencyset}{\mathscr{C}_{*}}
\newcommand{\sliceset}{\mathscr{S}}
\newcommand{\psliceset}{\mathcal{A}}
\newcommand{\LKL}{\tilde{\mathcal{A}}}
\newcommand{\covlog}{\mathsf{C}}
\newcommand{\mustar}{\mu_{\star}}
\providecommand\given{} 
\newcommand\SetSymbol[1][]{
  \nonscript\,#1:\nonscript\,\mathopen{}\allowbreak}
\DeclarePairedDelimiterX\Set[1]{\lbrace}{\rbrace}%
{ \renewcommand\given{\SetSymbol[]} #1 }
\begin{document}

\defaultbibliography{supnorm}

\begin{frontmatter}
  \title{Adaptive Bayesian density estimation in sup-norm}%
  \runtitle{Adaptive Bayesian density estimation in sup-norm}%

  \begin{aug}
    \author[A]{\fnms{Zacharie} \snm{Naulet}\ead[label=e1]{zacharie.naulet@universite-paris-saclay.fr}}%



    \address[A]{Universit{\'e} Paris-Saclay\\
      Laboratoire de math{\'e}matiques d'Orsay\\
      91405, Orsay, France\\
      \printead{e1} }
  \end{aug}

  \begin{abstract}
    We investigate the problem of deriving adaptive posterior rates of contraction
on $\LP{\infty}$ balls in density estimation. Although it is known that
log-density priors can achieve optimal rates when the true density is
sufficiently smooth, adaptive rates were still to be proven. Here we establish
that the so-called \textit{spike-and-slab} prior can achieve adaptive and
optimal posterior contraction rates. Along the way, we prove a generic
$\LP{\infty}$ contraction result for log-density priors with independent wavelet
coefficients.  Interestingly, our approach is different from previous works on
$\LP{\infty}$ contraction and is reminiscent of the classical test-based approach
used in Bayesian nonparametrics. Moreover, we require no lower bound on the
smoothness of the true density, albeit the rates are deteriorated by an extra
$\log(n)$ factor in the case of low smoothness.

%
  \end{abstract}

  \begin{keyword}[class=MSC]
    \kwd[Primary ]{62G07}%
    \kwd[; secondary ]{62G20}
  \end{keyword}

  \begin{keyword}
    \kwd{Bayesian density estimation}%
    \kwd{supremum norm}%
    \kwd{adaptation}%
  \end{keyword}
\end{frontmatter}


\begin{bibunit}[imsart-nameyear]
  \startcontents[main]%
  \section{Introduction}
\label{sec:introduction}

\par We consider the problem of estimating a density $\dens$ with respect to
Lebesgue measure on $[0,1]$ given $n$ independent and identically distributed
samples $\obs \coloneqq (X_1,\dots,X_n)$ from the corresponding distribution
$\pmes$. We adopt the Bayesian paradigm and put a joint distribution on
the log-density and the observations.%

\smallskip%
\par Over the decades, there has been a growing interest for the understanding
of the frequentist behaviour of posterior distributions initiated by the seminal
papers of \citet{schwartz-1965,barron-1999,ghosal-ghosh-vaart-2000-conver}. In
particular \citet{ghosal-ghosh-vaart-2000-conver} states generic sufficient
conditions for obtaining rates of concentration of the posterior distribution
near the true model in some distance. The approach relies on the well-known
existence of exponentially powerful test functions. The existence of such tests
depends on the distance considered, and is guaranteed for the $\LP{1}$ or
Hellinger distance between densities, and also for the $\LP{2}$ metric under
supplementary assumptions. It is, however, now well understood that the test
approach fails to give optimal rates when the risk is measured with respect to
the $\LP{\infty}$ distance, see
\citet{castillo2014bayesian,hoffmann2015adaptive,yoo2017adaptive}.

\clearpage%

\smallskip%
\par The failure of the classical approach for $\LP{\infty}$ rates is unfortunate
because one has in general a better intuition of the shape of $\LP{\infty}$ balls
rather than Hellinger balls, making the $\LP{\infty}$ risk a more natural
distance for evaluating performance of estimators. From a frequentist point of
view, density estimation in sup-norm is now well understood. Minimax lower
bounds can be found in \citet{khas1979lower} while upper bounds can be found for
instance in \citet{ibragimov1980estimate,goldenshluger2014adaptive}.

\smallskip%
\par For Bayesian procedures, concentration on $\LP{\infty}$ balls is much less
understood. For the non-adaptive case, the first result goes back to
\citet{gine2011rates} where optimal rates are obtained in white-noise regression
using conjugacy arguments. In the same paper, the authors obtained (possibly
adaptive) rates for density estimation in sup-norm using a testing approach, but
failed to achieve optimality. Using conjugacy arguments, \citet{yoo2016supremum}
also obtain non-adaptive but optimal rates for estimating a regression
function. \Citet{scricciolo-2014-adapt-bayes} adapts the techniques of
\citet{gine2011rates} to obtain optimal rates when the true density is
analytic. The first non-adaptive optimal result in density estimation for non
ultra-smooth densities is to be credited to \citet{castillo2014bayesian}, where
the author uses techniques based on semi-parametric Bernstein--von Mises
theorems. His approach, however, requires a minimal smoothness to be
applicable. Recently, \citet{castillo2017polya} obtained non-adaptive but
optimal rates for density estimation in sup-norm using P{\'o}lya trees prior,
with no lower bound required on the smoothness.

\smallskip%
\par The existence of adaptive and optimal results is, to our knowledge, even
more limited. The first successful result is in \citet{hoffmann2015adaptive}
where the authors get adaptive optimal rates in $\LP{\infty}$ norm for
white-noise regression using a \textit{spike-and-slab} prior. They also obtain
adaptive and optimal rates in density estimation, though their result is rather
an existence result as their abstract sieve prior is not computable. More
recently, \citet{yoo2017adaptive} obtained adaptive optimal rates in
$\LP{\infty}$ norm for estimating a regression function, using a white-noise
approximation of the likelihood to adapt the techniques developed in
\citet{hoffmann2015adaptive}. Since the first version of the current paper,
\citet{castillo2019spike} have introduced spike-and-slab P{\'o}lya trees and
built upon the results of \citet{castillo2017polya} to obtain adaptive
contraction rates, though the arguments we use are different.

\smallskip%
\par In density estimation, it is not obvious to proceed as
\citet{yoo2017adaptive} and reduce the problem to white-noise regression,
although it is known those models are equivalent (in the Le Cam sense) under
certain assumptions. Here, instead, we propose a different approach. We obtain
in \cref{sec:general-approach} a general contraction result for log-density
priors with independent wavelet coefficients. This result is the building block
of our main \cref{thm:3} about the \textit{spike-and-slab} log-density
prior. The posterior \textit{spike-and-slab} is known to be the Bayesian
analogue of \textit{hard thresholding} \citep{haerdle-2000-wavel}, as already
noticed by \citet{hoffmann2015adaptive,yoo2017adaptive}. As such, it constitutes
the prototypical example of model for which we expect adaptive and optimal
$\LP{\infty}$ contraction. Unlike \citet{hoffmann2015adaptive,yoo2017adaptive},
however, the present paper does not exploit the thresholding property of the
spike-and-slab posterior to establish the posterior support of the wavelet
coefficients, but uses a more classical approach.


\smallskip%
\par In the case of the spike-and-slab prior, we show that our method can be
applied to obtain \textit{minimax} optimal and adaptive posterior
contraction. More precisely, we show that if
$\L \coloneqq \log \dens \in \zygmund[0,1]$, where $\zygmund[0,1]$ denotes the
Hölder-Zygmund space with smoothness $\smooth > 1/2$ (see \cref{def:1}), then
there exists $M > 0$ such that as $n \to \infty$,
\begin{equation}
  \label{eq:26}
  \EE_{\L}\Pi(\L'\,:\, \|\L' - \L\|_{\infty} \leq
  M\minimaxrate \mid \obs) = 1 + o(1),
\end{equation}
where $\minimaxrate$ is the minimax rate over bounded balls in $\zygmund[0,1]$
under $\LP{\infty}$ loss \citep{donoho1996density}
\begin{equation}
  \label{eq:31}
  \minimaxrate%
  \coloneqq%
  \Big( \frac{\log n}{n} \Big)^{\frac{\smooth}{2\smooth + 1}}.
\end{equation}
Interestingly, our method can be applied to obtain adaptive rates in the region
$0 < \smooth \leq 1/2$, which to the best of our knowledge is the first result
of this type in the Bayesian literature for methods not relying on conjugacy
arguments. The rates we obtain in this region are, however, slightly
deteriorated by a factor $\log(n)$.

\smallskip%
\par In contrast with previous results in $\LP{\infty}$ loss, the approach used
in this paper is somewhat less specific and uses the same kind of arguments as
for the \textit{master theorem of Bayesian nonparametrics}
\citep{ghosal-ghosh-vaart-2000-conver,ghosal2007b,ghosal2007}. In particular, it
relies on the existence of suitable test functions and proving the prior
positivity of some neighborhoods, in apparent contradiction with the folk wisdom
that no test for the $\LP{\infty}$ loss has enough power to obtain optimal rates
\citet{hoffmann2015adaptive,yoo2017adaptive}. This contradiction is only
apparent, as here we require to test only very specific kind of alternatives,
and exponentially consistent tests are not needed. Although hard to generalize,
we believe the present paper shows that the traditional approach of
\citep{ghosal-ghosh-vaart-2000-conver,ghosal2007b,ghosal2007} is more powerful
than we believed, giving hope for the existence of general contraction results
in strong norms.

\smallskip%
This article comes with a supplementary material, which contain additional
proofs and various classical results about the spike-and-slab log-density
prior. We adopt the convention that every section, subsection, theorem, etc. of
the supplemental has label prefixed by S and is cited in cyan. References to the
main document are cited in blue with no prefix.

\section{Exponentiated random wavelet series}
\label{sec:expon-rand-wavel}

\subsection{Log-density priors}
\label{sec:prior-definition}

We use the $S$-regular, orthogonal, boundary corrected wavelets of
\citet{cohen1993wavelet}, referred to as the CDV basis. We denote this basis by
$\Set{\basis_{j,k} \given (j,k) \in \kset}$, where $\kset \subseteq \NNInts^2$, and refer
to \citet{cohen1993wavelet,gine2016mathematical,castillo2014bayesian} for
details. Each index $(j,k)$ is a pair where
$j \geq 0$ is the wavelet level and $k$ the location index. The CDV basis is an
orthogonal basis for $\LP{2}\domain$ equipped with the Lebesgue measure. We
endow $\LP{2}\domain$ with the inner product
$\Inner{f,g} \coloneqq \int_{\domain} f g$. If $f \in \zygmund\domain$ for some
$\smooth > 0$, then the wavelet series
$\sum_{(j,k) \in \kset}\Inner{f,\basis_{j,k}}\basis_{j,k}$ converges uniformly to
$f$. Conversely, for a given $(\cbasis_{j,k})_{(j,k)\in \kset}$, the series
$\sum_{(j,k)\in \kset}\cbasis_{j,k}\basis_{j,k}$ converges uniformly if $\bm\cbasis$
is in\begin{equation}
  \Theta \subseteq%
  \Set*{ \bm\cbasis \in \Reals^{\kset} \given%
    \textstyle \sup_{x\in \domain}\sum_{(j,k)\in \kset} |\cbasis_{j,k}||\basis_{j,k}(x)| < \infty }.
\end{equation}
Thus, we shall consider prior distributions over the space $\Theta$. Such prior
distribution induces a prior distribution on the space of densities on $\domain$
(with respect to the Lebesgue measure) through the mapping
$\bm\cbasis \mapsto p_{\bm\cbasis}$ such that
\begin{equation}
  \label{eq:10}
  p_{\bm\cbasis}(x)%
  \coloneqq%
  \frac{\exp\{\sum_{(j,k)\in\kset}\cbasis_{j,k}\basis_{j,k}(x)
    \}}{\int_{\domain}
    \exp\{\sum_{(j,k)\in\kset}\cbasis_{j,k}\basis_{j,k}\} },\qquad%
  x \in \domain.
\end{equation}

The coefficients $\bm\cbasis$ in \cref{eq:10} are immediately seen to be the
basis coefficients of an unnormalized version of the log-density
$\log \dens_{\bm\cbasis}$. By defining the log-normalizing constant
\begin{equation}
  \label{eq:5}
  \xi^{\bm\cbasis} \coloneqq -\log \Big(\int_{\domain}\exp\big\{
  \textstyle\sum_{(j,k)\in\kset}\cbasis_{j,k}\basis_{j,k}\big\}\Big)
  \in \Reals,
\end{equation}
we can rewrite the log-density $\L_{\bm\cbasis} \coloneqq \log
\dens_{\bm\cbasis}$ as
\begin{equation}
  \label{eq:45}
  \L_{\bm\cbasis}%
  = \xi^{\bm\cbasis} + \sum_{(j,k) \in \kset}\cbasis_{j,k}\basis_{j,k}%
  =\sum_{(j,k)\in \kset}\big( \cbasis_{j,k} +
  \Inner{\xi^{\bm\cbasis},\basis_{j,k}})\basis_{j,k}.
\end{equation}

\subsection{Spike-and-Slab log-density priors}
\label{sec:spike-slab-log}

To obtain adaptive and optimal rates of contraction, we consider the so-called
\textit{spike-and-slab} prior distribution over $\Theta$
\citep{mitchell1988bayesian}. For some weights
$(\omega_1,\omega_2,\dots) \in [0,1]^{\Nats}$%
\begin{equation}
  \cbasis_{j,k} \ind%
  \begin{cases}
    (1 - \omega_j)\delta_0 + \omega_j Q_j(\cdot) &\mathrm{if}\ 0
    \leq j \leq \frac{\log(n)}{\log(2)},\\
    \delta_0 &\mathrm{if}\ j > \frac{\log(n)}{\log(2)}.
  \end{cases}
\end{equation}
Here $\delta_0$ is the point mass at zero and $Q_j$ are probability
distributions on $\Reals$. We assume $Q_j$ have densities $q_j$ such that for
some $0 < \smooth_0 < 1/2$ and for some density $f$, we have
$q_j(x) \coloneqq 2^{j(\smooth_0 + 1/2)}f(2^{j(\smooth_0 + 1/2)}x)$ for every
$j \geq 0$. We write $F$ the probability distribution with density $f$. We
further assume that there are $a_1,b_1,b_2,\mustar,x_0 > 0$ such that
\begin{gather}
  \label{eq:38}
  a_1e^{-j b_1} \leq \omega_j \leq \min\{1/2,\,2^{-j(1 + \mustar)}\},\\
  \label{eq:39}
  \sup_{x > 1\vee x_0}e^{b_2x}\big(1 - F[-\log(x),\log(x)]\big) \leq 1.
\end{gather}
In order to ensure that the prior puts enough mass on neighborhoods of the true
log-density $\L$, we also assume that for all $G > 0$ there is $g > 0$ such that
\begin{equation}
  \label{eq:40}
  \inf_{x\in [-G,G]} f(x) \geq g.
\end{equation}
We note that the assumptions of \cref{eq:38,eq:40} are classical in the
literature for rates of contraction in supremum loss. The \cref{eq:39} is
however very strong, but guarantees that \textit{a priori}
$\L_{\bm\cbasis}$ has wavelet coefficients of reasonable magnitude,
which guarantees that the posterior concentrates on nice neighborhoods of $\L$,
see in particular \cref{supp-sec:sm:uniform-consistency}. As an example of
distribution $F$ that satisfy the requirements of \cref{eq:39,eq:40}, one could
take the distribution of the random variable $\log(Z)$ where $Z$ has an
inverse-Gaussian distribution (or any distribution on $\NNReals$ with an
exponential behaviour both near $0$ and $\infty$).

\subsection{Adaptation and optimality under supremum loss}
\label{sec:rates-contr-spike}

This paper considers adaptation over bounded Hölder-Zygmund balls, which we
define below. First, we give a precise definition for the Hölder-Zygmund spaces
of smoothness.

\begin{definition}[Hölder-Zygmund spaces]
  \label{def:1}
  For any $\smooth > 0$, the Hölder-Zygmund space $\zygmund\domain$ is the space of
  uniformly continuous functions $f : \domain \to \Reals$ such that
  $\|f\|_{\infty,\infty,\smooth} < \infty$, where $\|f\|_{\infty,\infty,\smooth}
  \coloneqq \sup_{(j,k) \in \kset}2^{j(s+1/2)}|\Inner{f,\basis_{j,k}}|$.
\end{definition}

We are now in position to define the bounded Hölder-Zygmund ball of
log-densities with radius $R > 0$ and smoothness $\smooth > 0$
\begin{equation}
  \label{eq:90}
  \dclass(R,\smooth)%
  \coloneqq \Set*{\L \in \zygmund\domain \given \|\L\|_{\infty,\infty,\smooth} \leq
    R,\, \textstyle \int_{\domain} \exp(\L) = 1}.
\end{equation}

We prove in the supplemental that spike-and-slab log-density priors satisfying
\cref{eq:38,eq:39,eq:40} achieve adaptive and nearly optimal posterior contraction rates
$\minimaxrate$ over $\dclass(R,\smooth)$ under Hellinger loss. In particular,
the next theorem is proven in \cref{supp-sec:sm:hell-contr}.
\begin{theorem}
  \label{thm:hellinger-adapt}
  Let $\Pi$ be the spike-and-slab log-density prior satisfying
  \cref{eq:38,eq:39,eq:40} and let $\He(P,Q)$ denote the Hellinger distance
  between probability distributions $P$ and $Q$. Then for all
  $0 < s_0 \leq \smooth \leq S$ and for all $R > 0$ there exists a constant $M > 0$
  such that
  \begin{equation}
    \lim_{n\to \infty}\sup_{\L\in \dclass(R,\smooth)}
    \EE_{\L}\Pi(\bm\cbasis\,:\, \He(P_{\bm\cbasis},P_{\L}) >
    M\minimaxrate \mid \obs) = 0.
  \end{equation}
\end{theorem}

The main theorem of the paper establishes that spike-and-slab log-density priors
can achieve the optimal posterior contraction rates if we consider the
$\LP{\infty}$ loss. The rate is optimal when $\smooth > 1/2$, with a slight
deterioration in the region of small smoothness.
\begin{theorem}
  \label{thm:3}
  Let $\Pi$ be the spike-and-slab log-density prior satisfying
  \cref{eq:38,eq:39,eq:40}. Also let $\sasrate \coloneqq \minimaxrate$ if
  $\smooth > 1/2$ or $\sasrate \coloneqq \log(n) \minimaxrate$ if
  $0 < \smooth \leq 1/2$. Then for all $0 < s_0 \leq  \smooth \leq S$ and for all $R > 0$
  there exists a constant $M > 0$ such that
  \begin{equation}
    \lim_{n\to \infty}\sup_{\L\in \dclass(R,\smooth)}
    \EE_{\L}\Pi(\bm\cbasis\,:\, \|\L_{\bm\cbasis} - \L\|_{\infty} >
    M\sasrate \mid \obs) = 0.
  \end{equation}
\end{theorem}

We emphasize that the \cref{thm:3} also entails posterior concentration of
$\|p_{\bm\cbasis} - p_{\L}\|_{\infty}$ at rate $\sasrate$, and thus because
$\L \in \Sigma(R,\smooth)$ implies that $e^{-R} \leq p_{\L} \leq e^R$. Hence,
$\|p_{\bm\cbasis} - p_{\L}\|_{\infty}$ and $\|\L_{\bm\cbasis} - \L\|_{\infty}$ are
equivalent distances when the latter is small enough. This assumption that $\L$
belongs to a Hölder ball of smoothness is stronger than the classical
frequentist assumption that only $p_{\L}$ does. In particular, we see that it
entails that $p_{\L}$ is bounded from below, which is of great help in the
proofs. Of course, this begs the question of what can be said when $p_{\L}$ is
smooth but not bounded from below, which is outside the scope of this paper. We
note that assuming smoothness on $\L$ rather than $p_{\L}$ is classical in the
Bayesian community \citep[see][]{castillo2014bayesian}.

\smallskip%
The rest of the paper is organized as follows. In the
\cref{sec:general-approach} we establish the main notations and give the main
ideas behind the proof of the \cref{thm:3}. In particular, we give guidelines
for the proof and state a central contraction result in \cref{thm:dr:1} which is
at the core of the proof of \cref{thm:3}. Then, in \cref{sec:discussion} we
discuss the main implications of our results. Finally, proofs are given in
\cref{sec:proof-crefthm:2,sec:proof-thm:dr:1,sec:remaining-proofs}, respectively
for the \cref{thm:3}, the \cref{thm:dr:1}, and for auxiliary results. Many
secondary proofs are deferred to the \cref{supp-sec:missing-proofs-main} of
the supplemental.

\section{Heuristic and main ideas behind the proof of
  \texorpdfstring{\cref{thm:3}}{Theorem \ref{thm:3}}}
\label{sec:general-approach}

\subsection{Notations}
\label{sec:notations-1}

\par We let $\Nats \coloneqq \Set{1,2,\dots}$ denote the set of natural numbers,
and we let $\NNInts \coloneqq \Set{0,1,\dots}$ denote the set of positive
integers. The symbols $\lesssim$ and $\gtrsim$ are used to denote inequalities
up to generic constants. If $a \lesssim b$ and $b \lesssim a$, we write
$a \asymp b$. For two sequences $(a_n)_{n\in \NNInts}$ and
$(b_n)_{n\in \NNInts}$, the notation $a_n = o(b_n)$ means
$\limsup_{n\rightarrow \infty} |a_n / b_n| = 0$, and $a_n = O(b_n)$ means
$\limsup_{n \rightarrow \infty}|a_n/b_n| = C$ for some $C \geq 0$.  For
$a,b \in \Reals$, we let $a \wedge b$ denote the minimum of $a$ and $b$, and
$a \vee b$ stands for the maximum. 
%

\smallskip%
\par Densities are understood with respect to the Lebesgue measure.  Lower-case
notations $p,q,\dots$ are used to denote densities, while upper-case $P,Q,\dots$
denote the corresponding distributions. Given a log-density $\L$ on $\domain$,
we write $\dens_{\L} \coloneqq \exp\{\L\}$ the corresponding density and
$P_{\L}$ the corresponding distribution. When $\L = \L_{\bm\cbasis}$ for some
$\cbasis \in \Theta$, we abbreviate $P_{\bm\cbasis}$ for $P_{\L_{\bm\cbasis}}$,
etc.

\smallskip%
We see $\obs = (X_1,\dots,X_n)$ as the beginning of an infinite sequence
$\mathbf{X}_{\infty} = (X_1,X_2,\dots)$ defined on a measurable space
$(\Omega,\mathcal{A})$ and such that under $L$, the variables $X_1,X_2,\dots$
are independent and identically distributed (iid) with distribution $P_{\L}$. We
write $\PP_{\L}$ the distribution of $\mathbf{X}_{\infty}$, and we write
indistinctly $\EE_{\L}$ the expectation under $\PP_{\L}$ or under $P_{\L}$. We
write $\PP_n = n^{-1}\sum_{i=1}^n\delta_{X_i}$ the empirical measure of $\obs$.

\smallskip%
We use the standard definitions for the $\LP{p}$ spaces of (equivalence classes)
of functions with finite $\|\cdot\|_p$ norm, with
$\|f\|_p^p \coloneqq \int |f|^p$ if $1\leq p < \infty$, and
$\|f\|_{\infty} \coloneqq \mathrm{ess\,sup}_x|f(x)|$. We will also make use of
the \textit{Hellinger} distance between two probability distributions $P$ and
$Q$ having respective densities $p$ and $q$, defined as
$\He(P,Q) \coloneqq \frac{1}{\sqrt{2}}\|\sqrt{p} - \sqrt{q}\|_2$.

\subsection{Change of parameterization}
\label{sec:change-param}

For some integer $J_0$ to be chosen sufficiently large, we define
$B_0 \coloneqq \Set{(j,k) \in \kset \given j \leq J_0}$. The indices in $B_0$
corresponds to small scales wavelets and will require special cares. To ease the
proof, it is convenient to relabel the wavelets with indices not in $B_0$. We
let $\psi : \Nats \to \kset \backslash B_0$ be the bijection corresponding to the
lexicographical reordering of the index set $\kset \backslash B_0$; i.e. writing
$\psi(m) = (j,k)$ and $\psi(m') = (j',k')$
\begin{align}
  \label{eq:77}
  m \leq m' \iff (j < j')\ \mathrm{or}\ (j = j'\ \mathrm{and}\ k \leq k').
\end{align}
For all $m \geq 1$ we write $J_m \coloneqq \psi(m)_1$ the scale-index of
the wavelet $\basis_{\psi(m)}$. By construction $J_1 = J_0 + 1$ and
$J_1 \leq J_2 \leq \dots$. For proofs, it is also convenient to define
$B_m \coloneqq \Set{\psi(m)}$ for all $m \geq 1$.

\smallskip%
Given this re-indexing of the wavelets, we are now in position to
define a change or parameterization which is convenient for proofs. We pick an
arbitrary reference log-density $\L \in \dclass(R,\smooth)$, and we
establish the concentration under $L$ by taking care that the results are
uniform over $\dclass(R,\smooth)$. Given $\L$, we let
$\cbasis_{j,k}^{\L} \coloneqq \Inner{\L,\basis_{j,k}}$, and we define 
\begin{equation}
  \label{eq:98}
  F_{m}^{\bm\cbasis} \coloneqq%
  \sum_{(j,k)\in B_m}(\cbasis_{j,k} - \cbasis_{j,k}^{\L})(\basis_{j,k} - \EE_{\L}[\basis_{j,k}]).
\end{equation}
Clearly $\L_{\bm\cbasis} -\L$ can be written uniquely in term of the
$(F_{m}^{\bm\cbasis})_{m\geq 0}$, so that we might as well consider
$(F_{m}^{\bm\cbasis})_{m\geq 0}$ as the parameter of the model. For each $m \geq 0$ we
will write
$\mathcal{F}_{m} \coloneqq \lspan\Set{\basis_{j,k} - \EE_{\L}[\basis_{j,k}] \given (j,k) \in B_m }$,
and $\mathcal{F} \coloneqq \mathcal{F}_1 \times \mathcal{F}_2 \times \dots$ the infinite
cartesian product of these spaces. We also let
$\mathcal{F}_0 \coloneqq \Set{\bm{F} \in \mathcal{F} \given \EE_{\L}[\exp\{\sum_{m\geq 0}F_m \}] <\infty }$
denote the subset of proper parameters. Then, we now parameterize the model by
$\bm{F} \in \mathcal{F}_0$. Using the constraint that
$\EE_{\L}[\exp\{\L_{\bm{F}} -\L\}] = 1$, we determine that the log-likehood of
the model $\bm{F} \in \mathcal{F}_0$ is given by
\begin{equation}
  \label{eq:78}
  \L_{\bm{F}} - \L = \sum_{m\geq 0}F_{m} - \log
  \EE_{\L}\big[\exp\{\textstyle\sum_{m\geq 0}F_{m} \}\big].
\end{equation}

\subsection{Guidelines for the proof of \texorpdfstring{\cref{thm:3}}{Theorem \ref{thm:3}} and intermediate
  contraction results}
\label{sec:heuristic}

Here we present the main ideas behind the proof of \cref{thm:3} and the main
intermediate results that are used in the proof. First of all, it is convenient
to assume that the posterior concentrates on nice neighborhoods of $\L$. We will
prove that the posterior concentrates on the set%
\begin{equation}
  \label{eq:42}
  \consistencyset%
  \coloneqq%
  \Set*{ \bm{F} \in \mathcal{F} \given%
    \textstyle\sup_{x\in \domain}\sum_{m\geq 0} |F_{m}(x)|  \leq \delta },
\end{equation}
where $0 < \delta \leq 1$ is a constant to be chosen sufficiently small. Once it has
been shown the posterior is concentrated on $\consistencyset$, the analysis of
the log-likehood difference $\L_{\bm{F}} - \L$ is easier. Posterior contraction
on $\consistencyset$ can be obtained by the classical machinery \textit{à la
  Ghosal and van der Vaart}
\citep{ghosal-ghosh-vaart-2000-conver,ghosal2007,ghosal2007b} and is essentially
a corollary of the \cref{thm:hellinger-adapt}. It can be done using similar
arguments as those already found in \citet{castillo2014bayesian,rivoirard2012},
as we do in \cref{supp-sec:sm:proof-lem:consistency} to prove the following
lemma.%

\begin{lemma}
  \label{lem:consistencyset}
  Let $\Pi$ be the prior described in \cref{sec:spike-slab-log}. Then for all
  $0 < s_0 \leq  \smooth \leq S$, all $R > 0$, and all $\delta > 0$,
  \begin{equation}
    \label{eq:138}
    \lim_{n\to \infty}\sup_{\L \in \dclass(R,\smooth)}\EE_{\L}\Pi(\consistencyset^{c} \mid \obs) = 0.
  \end{equation}
\end{lemma}

To obtain $\LP{\infty}$ rates, the goal is to relate the distance
$\|\L_{\bm{F}} - \L\|_{\infty}$ to the parameter $\bm{F}$. In particular, we shall
seek to relate $\|\L_{\bm{F}} - \L\|_{\infty}$ to $\Set{\|F_m\|_2 \given m\geq 0}$,
which is motivated by the fact that $\Set{\|F_m\|_2 \given m\geq 0}$ essentially
drives the behaviour of the posterior distributions. The following lemma serves
this purpose.

\begin{lemma}
  \label{lem:distance-bound}
  Let $\bm{F} \in \consistencyset$. Then, there exists a universal constant
  $C > 0$ such that for all choice of $J_0$ we have
  \begin{equation}
    \label{eq:139}
    \|\L_{\bm{F}} - \L\|_{\infty}%
    \leq C\sum_{j \geq J_{0}}2^{j/2} \sup_{m\,:\, J_{m}=j} \|F_m\|_2.
  \end{equation}
\end{lemma}

In view of \cref{lem:consistencyset,lem:distance-bound}, to prove the
\cref{thm:3} it is enough to prove that the posterior concentrates on a set
where the rhs in \cref{lem:distance-bound} is smaller than a multiple constant
of $\sasrate$. Our stretagy is to build a partition of $\mathcal{F}$, where on
each part we have a fine control of $\Set{\|F_m\|_2 \given m \geq 0}$. We build the
partition $(\sliceset_I)_{I \subseteq \NNInts}$, such that for every $I \subseteq \NNInts$,
\begin{equation}
  \label{eq:13}
  \sliceset_I \coloneqq%
  \Set*{ \bm{F} \in  \mathcal{F} \given%
      m \in I \implies%
      \|F_m\|_2 > H_I(m),\quad %
          m \notin I \implies %
    \|F_m\|_2 \leq H_I(m)
  },%
\end{equation}
where we choose $H_m(I)$ as follows. We let $\Gamma,\gamma > 0$ be constants to be
determined, and we define the optimal truncation level $j_n \equiv j_n(\smooth)$ as the only integer
satisfying%
\begin{equation}
  \label{eq:44}
  \begin{cases}
    \gamma 2^{-(j_n+1)(\smooth + 1/2)} < \Gamma \sqrt{\log(n)/n} \leq \gamma 2^{-j_n(\smooth + 1/2)} &\mathrm{if}\ \smooth > 1/2,\\
    \gamma 2^{-(j_n+1)(\smooth + 1/2)} < \Gamma 2^{-j_n/2}\minimaxrate \leq \gamma 2^{-j_n(\smooth + 1/2)} &\mathrm{if}\ 0 < \smooth \leq 1/2.
  \end{cases}%
\end{equation}
Then, for $\xi > 1$ also to be chosen accordingly,
\begin{align}
  \label{eq:88}
  H_I(m)%
  &\coloneqq%
    \begin{cases}
      \Gamma \xi^{-\1_{0\in I}\1_{m\ne 0}}\rho_m &\mathrm{if}\ J_m \leq j_n,\\
      \gamma 2^{-J_m(\smooth + 1/2) } &\mathrm{if}\ J_m > j_n\\
    \end{cases},\qquad%
  \rho_m%
  \coloneqq%
  \begin{cases}
    \sqrt{\log(n)/n} &\mathrm{if}\ \smooth > 1/2,\\
    2^{-J_m/2}\minimaxrate &\mathrm{if}\ 0 < \smooth \leq 1/2.\\
  \end{cases}
\end{align}
At this point, it might look obscure why the definition of $H_I(m)$ differs
according to whether $\smooth > 1/2$ or not, and also according to whether
$0 \in I$ or not. The subtle reason of this choice will be found when proving the
\cref{thm:dr:1}. In fact, we will require to control some covariance terms
involving $F_m$ and $F_{m'}$ (see also \cref{sec:contr-vari-covar}). The control
of these covariance terms can get tricky, and this particular choice of $H_I(m)$
permits to obtain the desired control.

Since $H_I(m)$ is function of $I$,
it is not immediate that $(\sliceset_I)_{I\subseteq \Nats}$ is a proper partition. We
establish this fact in the next lemma, together with a useful property of this
partition.
\begin{lemma}
  \label{lem:partition}
  The collection $(\sliceset_I)_{I \subseteq \NNInts}$
  is a partition of $\mathcal{F}$ and
  \begin{equation}
    \label{eq:55}
    \bm{F} \in \Big( \bigcup_{I \ne \varnothing}\sliceset_I \Big )^{c} \implies%
    \|F_m\|_2 \leq
    \begin{cases}
      \Gamma \rho_m &\mathrm{if}\ J_m \leq j_n,\\
       \gamma 2^{-J_m(\smooth + 1/2)} &\mathrm{if}\ J_m > j_n.
    \end{cases}
  \end{equation}
\end{lemma}
The previous lemma is one of the key result. In conjunction with
\cref{lem:consistencyset,lem:distance-bound}, it implies the following corollary
which is the starting point of the proof of \cref{thm:3}.

\begin{corollary}
  \label{cor:strategybound}
  For all choice of $J_0,\Gamma,\gamma,\xi$ and for all $(R,\smooth)$ there exists $M> 0$
  such that the following bound is true.%
  \begin{equation}
    \label{eq:G3}
    \EE_{\L}\Pi(\bm\cbasis\,:\, \|\L_{\bm\cbasis} - \L\|_{\infty} >
    M\sasrate \mid \obs)%
    \leq%
    \sum_{\substack{I \subseteq \NNInts\\I\ne \varnothing}}\EE_{\L}\Pi(\consistencyset \cap \sliceset_I \mid
    \obs).
  \end{equation}
\end{corollary}

Our strategy is then to bound each of the terms
$\EE_{\L}\Pi(\consistencyset \cap \sliceset_I \mid \obs)$ for $I \ne \varnothing$; which
is done in the \cref{thm:dr:1} below. Interestingly, the technique is
reminiscent to the classical testing approach of
\citep{ghosal-ghosh-vaart-2000-conver,ghosal2007b,ghosal2007} with extra cares.
Large parts of the proofs of \cref{thm:3,thm:dr:1} rely on the fine tuning of
the constants $\delta,J_0,\Gamma,\gamma,\xi$, as well as the relation between those constants, and
also on taking $n$ sufficiently large. Since the proofs are quite long, it can
be challenging to keep track all along of the constraints those constants must
satisfy. To facilitate the understanding of the theorems and their proof, we
summarize in the next assumption how $\delta,J_0,\Gamma,\gamma,\xi$ and $n$ must be taken at the
end of the day for the theorems to hold true.
\begin{assumption}
  \label{ass:1}
  We assume that there are constants $K_0,K_1,K_2,K_3,K_4 > 0$ eventually large and
  eventually depending on $(R,\smooth)$ but solely on $(R,\smooth)$, such that
  \begin{enumerate}
    \item $J_0 \geq K_0\max\{1,\, \log(1/\delta),\, \log(\gamma)\}$;
    \item
          $\log(n) \geq K_1\max\{2^{J_0},\, \delta^{-2},\, \log(\Gamma),\, \log(\gamma),\, \log(\xi) \}$;
    \item  $\xi \geq K_2 \delta^{-1}\max\{1,\gamma\}2^{J_0}$;%
    \item $\gamma \geq K_3\max\{1,\,\delta^{-1}\}$;%
    \item $\Gamma \geq \max\{\gamma,\, K_42^{J_0},\, K_4\xi\}$.
  \end{enumerate}
  The constant $\delta$ will be taken as small as needed.
\end{assumption}

Finally, bounding $\EE_{\L}\Pi(\consistencyset \cap \sliceset_I \mid \obs)$ rely on
splitting the parameter $\bm{F}$ into two parts $\bm{F} = (\PF,\PFC)$ where
$\PF \coloneqq (F_m)_{m\in I}$ and $\PFC \coloneqq (F_m)_{m\notin I}$. The following
functions will also be used:
\begin{equation}
  \label{eq:50}
  \SPF
  \coloneqq \sum_{m \in I} F_{m},\
  \mathrm{and},\ %
  \SPFC \coloneqq \sum_{m \notin I} F_{m}.
\end{equation}

\begin{theorem}
  \label{thm:dr:1}
  Suppose \cref{ass:1} is satisfied with constants $K_0,K_1,K_2 > 0$
  sufficiently large and let $\Pi$ be a prior such that $\PF$ and $\PFC$ are
  independent for all $I \subseteq \NNInts$. Then, there are constants
  $c_0,c_1,c_2,\delta_0 > 0$ such that for all $0 < \delta \leq \delta_0$, for all
  $\frac{512\cdot c_0\delta}{1 + 512\cdot c_0\delta} < \alpha \leq 1/2$, and for all $I \subseteq \NNInts$,
  \begin{equation}
    \label{eq:dr:20}
    \EE_{\L}\Pi(\consistencyset \cap \sliceset_I  \mid \obs)%
    \leq \Bigg\{ \frac{3\exp\big\{-c_2n\natrate_I^2 + c_12^{J_0}|I|\}^{\alpha}}{(1- e^{-c_2 n\natrate_I^2})^{2\alpha}}%
      \Big\{\frac{\Pi(\psliceset_I)}{\Pi(\LKL_I)} \Big\}^{1 - \alpha}
    \Bigg\}^{\frac{1}{1+2\delta}},
  \end{equation}
  where
  $\natrate_I \coloneqq \inf\Set{\EE_{\L}[\SPF^2]^{1/2} \given \bm{F} \in \sliceset_I}$,
  $\mathcal{A}_I \coloneqq \Set{\PF \given \|F_m\|_2 > H_I(m) }$,
  and
  $\LKL_I \coloneqq \Set{\PF \given \EE_{\L}[\SPF^2] \leq \delta^2 \natrate_I^2,\, \|\SPF\|_{\infty} \leq
    \delta}$; provided that $\Pi(\LKL_I) > 0$ and $\natrate_I > 0$ for all $I \subseteq \NNInts$.
\end{theorem}

We note that the fact that $\natrate_I > 0$ and $\Pi(\LKL_I) > 0$ for the
spike-and-slab prior are consequences of \cref{lem:2,pro:1} that will be
established later. Also, we point out that in the whole paper we make the abuse
of notations of writing $\Pi$ to denote the prior on $\bm\cbasis$,
$\L_{\bm\cbasis}$, $\L_{\bm{F}}$, $\bm{F}$, as well as for the restricted
parameters $\PF$ or $\PFC$. The proof of the \cref{thm:3} consists on
specializing the bound of \cref{thm:dr:1} to the spike-and-slab prior and using
it in conjunction with \cref{cor:strategybound} to conclude.

\section{Discussion}
\label{sec:discussion}

\paragraph*{The master theorem of Bayesian nonparametrics}

The current state-of-the-art method in calculating posterior contraction rates
is the master theorem developed by
\citet{ghosal-ghosh-vaart-2000-conver,ghosal2007,shen2001rates}. This theorem
relies on two main ingredients:
\begin{itemize}
  \item The existence of tests for the hypothesis $H_0 : \L' = \L$ against the
  alternative $H_1 : \|\L' - \L\|_{\infty} > M\minimaxrate$, with Type I and
  Type II errors decreasing as $\exp\{- Kn \minimaxrate^2\}$;
  \item The prior puts enough mass on certain Kullback-Leibler neighborhoods of
  $\L$.
\end{itemize}
In the context of $\LP{\infty}$ contraction, it is known that the master theorem
yields suboptimal contraction rates
\citep{gine2011rates,hoffmann2015adaptive,yoo2017adaptive}. The issue is
discussed thoroughly in \citep{hoffmann2015adaptive,yoo2017adaptive}: no test
has enough power to obtain the optimal rate of contraction in $\LP{\infty}$. In
particular, the Type II error has to decay polynomially in $n$, unless we
deteriorate the rate. It is known that not all the alternative $H_1$ has to be
tested -- only a suitable sieve -- but this does not help either to get optimal
rates, the root of the problem being deeper.

\smallskip%
The arguments in \citet{hoffmann2015adaptive,yoo2017adaptive} are strong, and it
is natural to ask what is wrong in the current paper such that the tests we use
in the proofs of \cref{thm:3,thm:dr:1} permit optimal contraction rates. This
indeed relies on the nature of the alternative we test. We are not constructing
tests for $H_1 : \|\L' - \L\|_{\infty} > M \minimaxrate$, but instead for each
$I \subseteq \Nats$, we build a test for
$H_1 : \L' \in \Set{\L_{\bm{F}} \given \bm{F} \in \sliceset_I}$. Those
tests (see the proof of \cref{lem:dr:step2}) have Type I and Type II errors
decreasing as $\exp\{-K n \natrate_I^2 \}$, which is typically
polynomial in $n$ when $|I|$ is small, and thus not in contradiction with the
arguments of the aforementioned papers. We remark that $|I|$ small corresponds
exactly to those log-densities $\L'$ that can be far from $\L$ in $\LP{\infty}$
but close in $\LP{2}$, and thus hard to separate. When $|I|$ gets large, however,
the powers of the tests increase, which is what saves us.

\smallskip%
The main drawback of the method is getting a sharp enough bound on the
denominator of the Bayes rule, which seems hard to do beyond the scope of
independent wavelet coefficients, or at least having a nice structure. Anyhow,
we believe the approach of the current paper shows that the master theorem of
Bayesian nonparametrics can be still useful for $\LP{\infty}$ contraction, giving
some hope toward a general $\LP{\infty}$ contraction result of the same flavour.

\paragraph*{Suboptimality when $0 < \smooth \leq 1/2$}

\par The rates of \cref{thm:3} are slightly suboptimal in the region
$0 < \smooth \leq 1/2$. The problem is indeed not inherent to the spike-and-slab
prior, and as such not surprising as it is known density estimation on the
interval behave very differently when $0 < \smooth \leq 1/2$ or $\smooth > 1/2$,
see for instance \citet{brown1998asymptotic}. Our troubles come from the
impossibility of taking $\rho_m = \sqrt{\log(n)/n}$ when $\smooth \leq 1/2$ and we
have instead to take a much larger threshold $\rho_m = 2^{-J_m/2} \minimaxrate$.
The reasons for this impossibility are to be found in controlling some
covariance terms when decomposing the likelihood, see
\cref{sec:contr-vari-covar}. In fact, this exhibits a major difference on the
strength of the result we prove here: in the case $\smooth > 1/2$ the control of
the posterior is much tighter. In particular, we prove that every wavelet
coefficients of $\L_{\bm\cbasis}$ at level $j \leq j_n$ is within
$\sqrt{\log(n)/n}$ distance of the coefficients of $\L$ if $\smooth > 1/2$,
while we are only able to get a distance of $2^{-j/2}\minimaxrate$ otherwise
(which is much larger when $j$ is small).

\smallskip%
\par To the best of our knowledge, no method based on asymptotic expansions of
the log-likelihood succeeded before in getting posterior $\LP{\infty}$ rates
when $\smooth \leq 1/2$. Thereby, the strategy developed here shed new light on
our understanding of $\LP{\infty}$ contraction. In view of the recent result of
\citet{castillo2019spike}, however, methods based on conjugacy arguments are
able to obtain adaptivity and optimality over all $0 < \smooth \leq 1$, with no
extra $\log(n)$ factor. This shows that we don't really understand yet enough
the behaviour of the log-likelihood when $0 < \smooth \leq 1/2$, which should be
investigated in a near future by the author.

\paragraph*{Estimation of the derivatives}

The spike-and-slab prior of \cref{sec:spike-slab-log} also achieves optimal
contraction rates for estimating the derivatives of the density. We remark that
if $\L$ has derivatives $\L^{(r)}$, $r\geq 1$ integer, then
$\smooth \geq 1 > 1/2$. Then, in this case, investigation of the proof of
\cref{thm:3} shows that the posterior contracts on the set
$\Set{\L' \given |\Inner{\L' - \L,\basis_{j,k}}| \lesssim \sqrt{\log(n)/n}\1_{j\leq j_n} + 2^{-j(\smooth + 1/2)}\1_{j> j_n}\ \forall (j,k)\in \kset }$.
Then, it
is a classical result that this implies for all $1\leq r \leq \smooth \leq S$
with $r$ integer,
\begin{equation}
  \sup_{\L\in \dclass(R,\smooth)}
  \EE_{\L}\Pi(\bm\cbasis\,:\, \|\L_{\bm\cbasis}^{(r)} - \L^{(r)}\|_{\infty} >
  M\minimaxrate^{\frac{\smooth - r}{\smooth}} \mid \obs) = o(1).
\end{equation}



\section{Proof of the \texorpdfstring{\cref{thm:3}}{Theorem \ref{thm:3}}}
\label{sec:proof-crefthm:2}

As explained in \cref{sec:heuristic}, the proof of \cref{thm:3} consists on
plugging the bound of \cref{thm:dr:1} into the bound of
\cref{cor:strategybound}. The first step is to obtain an upper estimate on
$\Pi(\mathcal{A}_I)/\Pi(\LKL_I)$. The next lemma is proved in \cref{supp-sec:proofs-lem:2-lem:3}.

\begin{lemma}
  \label{lem:2}
  Suppose \cref{ass:1} is satisfied with constants $K_0,K_1,K_2,K_3 > 0$
  sufficiently large. Then, there are universal constants $\nu_1,\nu_2 > 0$ such
  that for all $I \subseteq \NNInts$,%
  \begin{equation}
    \label{eq:151}
    \frac{\Pi(\mathcal{A}_I )}{\Pi(\LKL_I)}%
    \le %
    \nu_1 \exp\Big\{ \nu_2\log(n)\sum_{m\in I}2^{J_0\1_{m=0}}\1_{J_m \leq j_n}%
    - (1+\mustar)\log(2)\sum_{m\in I}J_m\1_{J_m>j_n}%
    \Big\}.
  \end{equation}
  Furthermore, if $I \cap \Set{m \given J_m > \log(n)/\log(2)} \ne \varnothing$, then
  $\Pi(\mathcal{A}_I) / \Pi(\LKL_I) = 0$.
\end{lemma}

Then, we can  plug the bound of \cref{lem:2} into the
\cref{thm:dr:1} and fine-tune $\alpha$ in function of $I$ to obtain a clean bound
on $\EE_{\L}\Pi(\consistencyset \cap \sliceset_I \mid \obs)$. We do so in the following
lemma, also proved in \cref{supp-sec:proofs-lem:2-lem:3}.

\begin{lemma}
  \label{lem:3}
  Suppose \cref{ass:1} is satisfied with constants $K_0,K_1,K_2,K_3,K_4 > 0$
  sufficiently large and $\delta >0$ is taken sufficiently small. Then, there are
  universal constants $\nu_3,\nu_4 > 0$ such that for all $I \subseteq \NNInts$,
  \begin{equation}
    \label{eq:129}
    \EE_{\L}\Pi(\consistencyset \cap \sliceset_I \mid \obs)%
    \leq
    \nu_4 \prod_{\substack{m\in I\\J_m \leq j_n}}n^{-\nu_3K_4^2}%
    \prod_{\substack{m\in I\\J_m> j_n}}2^{-J_m(1 + \mustar/2)}.
  \end{equation}
\end{lemma}

Now we are in position to use the bound established in \cref{lem:3} with the
inequality of \cref{cor:strategybound} to finish the proof. Define for
simplicity $g_m \coloneqq \nu_3 K_4^2\log(n)$ if $0 \leq J_m \leq j_n$ and
$g_m \coloneqq J_m(1 + \mustar/2)\log(2)$ if $J_m > j_n$. Then,%
\begin{align}
  \sum_{\substack{I \subseteq \NNInts\\I\ne \varnothing} }\EE_{\L}\Pi(\consistencyset \cap \sliceset_I\mid
  \obs)
  &\leq \nu_4 \sum_{\mathbf{b} \in \Set{0,1}^{\NNInts}}%
    \Ind\Set{\textstyle\sum_mb_m\geq 1\displaystyle }
    \prod_{m\in \NNInts}e^{-g_m b_m}\\
  &\leq \nu_4 \sum_{m'\in \NNInts}%
    \sum_{\mathbf{b} \in \Set{0,1}^{\NNInts}}b_{m'}\prod_{m\in \NNInts}e^{-g_m b_m}\\%
  &= \nu_4 \sum_{m'\in \NNInts}%
    \sum_{\mathbf{b} \in \Set{0,1}^{\NNInts}}e^{-g_{m'}b_{m'}}b_{m'}\prod_{\substack{m\in \NNInts\\m\ne m'}}e^{-g_m b_m}\\
  &= \nu_4  \sum_{m' \in \NNInts}e^{-g_{m'}}\prod_{\substack{m\in\NNInts\\m\ne m'}}(1 + e^{-g_m}).
\end{align}
Hence we get the bound,
\begin{equation}
  \label{eq:72}
  \sum_{\substack{I \subseteq \NNInts\\I\ne \varnothing} }\EE_{\L}\Pi(\consistencyset \cap \sliceset_I\mid
  \obs)
  \leq \nu_4\exp\Big(\sum_{m\in \NNInts}e^{-g_m} \Big)
  \sum_{m \in \NNInts}e^{-g_m}.
\end{equation}
The previous display is $o(1)$ whenever
$\sum_{m\in \NNInts}e^{-g_m} = o(1)$, which we prove now. Indeed,
\begin{align}
  \sum_{m \in \NNInts}e^{-g_m}%
  &= \sum_{m\in \NNInts}e^{-g_m}\1_{J_m\leq j_n} + \sum_{m\in \NNInts}e^{-g_m}\1_{J_m > j_n}\\
  &\lesssim n^{-\nu_3K_4^2 }\sum_{m\in \NNInts}\1_{J_m\leq j_n} + \sum_{m\in \NNInts}2^{-j(1 + \mustar/2)}\1_{J_m > j_n}\\
  &\lesssim n^{-\nu_3K_4^2}\sum_{j= 0}^{j_n}2^j + \sum_{j > j_n}2^{-j(1 + \mustar/2)}2^j\\
  &\lesssim n^{- \nu_3K_4^2}2^{j_n} + 2^{-j_n \mustar / 2},
\end{align}
where the third line follows as there are no more than $\lesssim 2^j$ wavelets at level
$j$. Now, we remark that $2^{j_n} \asymp (n / \log(n))^{2/(2\smooth + 1)}$.
Hence, if $K_4$ is taken large enough, $\sum_{m\in \NNInts}e^{-g_m}= o(1)$, as
claimed.

\section{Proof of the \texorpdfstring{\cref{thm:dr:1}}{Theorem \ref{thm:dr:1}}}
\label{sec:proof-thm:dr:1}

\subsection{Main ideas}
\label{sec:main-ideas}

We already know that
$\sliceset_I \subseteq \Set{\PF \given \EE_{\L}[\SPF^2] \geq \natrate_I^2 }$ by
construction. We obtain a finer result by further slicing the set $\sliceset_I$.
For $y \geq 1$ integer, we let
\begin{align}
  \label{eq:4}
  \sliceset_I^y%
  &\coloneqq
    \sliceset_I \cap \Set{\bm{F}\given y \natrate_I^2 \leq \EE_{\L}[\SPF^2] < (y+1)\natrate_I^2}.
\end{align}
Similarly, we define we define
$\mathcal{A}_I^y \coloneqq \mathcal{A}_I \cap \Set{\PF \given y \natrate_I^2 \leq \EE_{\L}[\SPF^2] < (1+y)\natrate_I^2}$.
Clearly $(\mathcal{A}_I^y)_{y\geq 1}$ is a partition of $\mathcal{A}_I$. The first
lemma establishes a first bound on the posterior mass of $\sliceset_I^y$.

\begin{lemma}
  \label{lem:dr:step1}
  Suppose \cref{ass:1} is satisfied with constants $K_0,K_1,K_2 > 0$
  sufficiently large. Also suppose $\Pi$ is such that $\PF$ and $\PFC$ are
  independent for all $I\subseteq \Nats$, and $\Pi(\LKL_I) > 0$ for all $I\subseteq \NNInts$. Then, there are universal constants
  $c_0,\delta_0 > 0$ such that for all $t > 0$ there is an event $\Omega_{t}$ with
  $\PP_{\L}^n(\Omega_t^c) \leq e^{-t}$ and if $\obs \in \Omega_t$, for all $0 < \delta \leq \delta_0$, for
  all $I \subseteq \NNInts$, for all $y \geq 1$,
  \begin{equation}
    \label{eq:48}
    \Pi(\consistencyset \cap \sliceset_I^y \mid \obs )%
    \leq 2e^{2\delta t + c_0\delta yn\natrate_I^2} \int_{\mathcal{A}_I^y \cap \Set*{\|\SPF\|_{\infty} \leq \eta }} %
    \prod_{i=1}^n \frac{q_{\PF}(X_i)}{\dens_{\L}(X_i)}\,
     \frac{\Pi(\intd \PF)}{\Pi(\LKL_I)},
  \end{equation}
  where $q_{\PF}$ is a probability density on $[0,1]$ whose exact expression is
  known but deferred to the proof of the lemma for convenience.%
\end{lemma}
The last lemma is the key result of the proof. Interestingly, the classical
approach to concentration rates \textit{à la}
\citet{ghosal-ghosh-vaart-2000-conver} consists on establishing a similar
relation, but with $q_{\PF}$ replaced by $\exp\{\L_{\bm{F}}\}$ and $\LKL_I$
replaced by a \textit{Kullback-Leibler} neighborhood of $\dens_{\L}$. We use the
estimate of \cref{lem:dr:step1} to bound
$\EE_{\L}\Pi(\consistencyset \cap \sliceset_I^y \mid \obs)$ using the standard testing
approach \textit{à la} \citet{ghosal-ghosh-vaart-2000-conver}, coupled with the
\textit{square-root trick} of
\citet{lijoi2005consistency,walker2007rates,ghosal2007b}. This step is rather
immediate in view of the existing literature and it boils down to bound
$\inf\Set{\He(Q_{\SPF},P_{\L})^2 \given \PF \in \psliceset_I^y,\, \|\SPF\|_{\infty} \leq \delta }$
and the metric entropy (in the Hellinger distance) of the set of densities
$\mathcal{P}_I^y \coloneqq \Set{q_{\PF} \given \PF \in \psliceset_I^{y},\, \|\SPF\|_{\infty} \leq \delta}$.
This gives the following lemma.%
\begin{lemma}
  \label{lem:dr:step2}
  Suppose \cref{ass:1} is satisfied with constants $K_0,K_1,K_2 > 0$
  sufficiently large,  and let everything as in
  \cref{lem:dr:step1}. Then, there are universal constants $c_1,\delta_0 > 0$ such that for all
  $0 < \delta \leq \delta_0$, for all $I \subseteq \NNInts$, for all $y \geq 1$, for all $t > 0$,
  \begin{equation}
    \label{eq:29}
    \EE_{\L}[\Pi(\consistencyset \cap \sliceset_I^y \mid \obs)\Ind_{\Omega_t}]%
    \leq  \Bigg\{ \frac{8\exp(- \frac{y n\natrate_I^2}{256} + 2\delta t + c_12^{J_0}|I|) }{1 -
      e^{-y n\natrate_I^2 / 256}}\frac{\Pi(\psliceset_I^y)}{\Pi(\LKL_I)} \Bigg\}^{1/2}.
  \end{equation}
\end{lemma}

We can obtain a bound on
$\EE_{\L}[\Pi(\consistencyset \cap \sliceset_I \mid \obs)\Ind_{\Omega_t}]$ by summing over
$y \geq 1$ the bound obtained in \cref{lem:dr:step2}. This gives a valid bound, but
it is in not sharp enough in cases where $|I|$ gets too large or $n\natrate_I^2$
is too small. Indeed, in those cases, we can improve the bound to give more
importance to the prior by remarking that taking the expectation both sides of the expression
in \cref{lem:dr:step1} and applying Fubini's theorem gives%
\begin{equation}
  \label{eq:32}
  \EE_{\L}[\Pi(\consistencyset \cap \sliceset_I^y \mid \obs)\Ind_{\Omega_t}]%
  \leq 2e^{2 \delta t + c_0\delta y n \natrate_I^2}\frac{\Pi(\psliceset_I^y)}{\Pi(\LKL_I)}.
\end{equation}
This improvement permits to assume only $\mustar > 0$, otherwise we would have
to assume $\mustar > 1$, which may be undesirable in practice (as it may cause
over-shrinkage). The next lemma leverages that
$\EE_{\L}[\Pi(\consistencyset \cap \sliceset_I^y \mid \obs)\Ind_{\Omega_t}]$ is bounded above
by the minimum between the expression in \cref{lem:dr:step2} and the last
display to get a sharp bound on
$\EE_{\L}[\Pi(\consistencyset \cap \sliceset_I \mid \obs)\Ind_{\Omega_t}]$.

\begin{lemma}
  \label{lem:1}
  Suppose \cref{ass:1} is satisfied with constants $K_0,K_1,K_2 > 0$
  sufficiently large, and let everything as in
  \cref{lem:dr:step1,lem:dr:step2}. Then, there are universal constants $c_2,\delta_0 > 0$ such that for all
  $0 < \delta \leq \delta_0$, for all $I \subseteq \NNInts$, for all $\frac{512\cdot c_0\delta}{1 + 512\cdot c_0\delta} < \alpha \leq 1/2$, for all $t > 0$,
  \begin{align}
    \label{eq:83}
    \EE_{\L}[\Pi(\consistencyset \cap \sliceset_I \mid \obs)\Ind_{\Omega_t}]%
    &\leq \frac{\sqrt{8}e^{2\delta t}\exp\big\{-c_2n\natrate_I^2 + c_12^{J_0}|I|\}^{\alpha}}{(1- e^{-c_2  n\natrate_I^2})^{2\alpha}}%
      \Big\{\frac{\Pi(\psliceset_I)}{\Pi(\LKL_I)} \Big\}^{1 - \alpha}.
  \end{align}
\end{lemma}

Finally, to obtain the bound in the statement of the theorem, we note that,
\begin{align}
  \label{eq:2}
  \EE_{\L}\Pi(\consistencyset \cap \sliceset_I \mid \obs)%
  &\leq%
  \inf_{t>0}\Big\{e^{-t} + \EE_{\L}[\Pi(\consistencyset \cap \sliceset_I \mid \obs)\1_{\Omega_t}] \Big\}.
\end{align}
Plugging the bound obtained in \cref{lem:1} into the previous display and
solving to find the infimum gives the bound of the theorem when choosing $\delta$
small enough.

\subsection{Proofs of
  \texorpdfstring{\cref{lem:dr:step1,lem:dr:step2,lem:1}}{Lemmas
    \ref{lem:dr:step1} to \ref{lem:1}}}
\label{sec:proofs-lem:dr:st-lem}

\begin{proof}[Proof of \texorpdfstring{\cref{lem:dr:step1}}{Lemma \ref{lem:dr:step1}}]
Let define $\Phi(f) \coloneqq f - \EE_{\L}[f] - \log \EE_{\L}[e^{f - \EE_{\L}[f]}]$ and
$\covlog(f,g) \coloneqq \log \EE_{\L}[e^{\Phi(f)}e^{\Phi(g)}]$. We will see that
$-\covlog(\SPF,\SPFC)$ is asymptotically equivalent to the covariance of $\SPF$
and $\SPFC$, and thus we will refer abusively to this term as the covariance
from now on. It is easily seen that the log-likelihood can be rewritten as
\begin{equation}
  \label{eq:24}
  \L_{\bm{F}} - \L%
  = \Phi(\rSPF) + \Phi(\rSPFC)%
  - \covlog(\rSPF,\rSPFC).
\end{equation}
Then, by the Bayes rule,
\begin{align}
  \label{eq:173}
  \Pi(\consistencyset \cap \sliceset_I^y \mid \obs)%
  &=%
      \frac%
  {%
    \int_{\consistencyset\cap \sliceset_I^y}e^{n\empmes_n\Phi(\SPF)}e^{n\empmes_n\Phi(\SPFC)}e^{-n\covlog(\SPF,\SPFC)}
    \Pi(\intd\bm{F}) }%
  {%
  \int e^{n\empmes_n\Phi(\SPF)}e^{n\empmes_n\Phi(\SPFC)}e^{-n\covlog(\SPF,\SPFC)}
    \Pi(\intd\bm{F})}.
\end{align}
Recall that $\PF \coloneqq (F_m)_{m\in I}$ and
$\PFC \coloneqq (F_m)_{m\notin I}$. Also, in addition to $\mathcal{A}_I^y$, we let
\begin{align}
  \label{eq:172}
  \mathcal{N}_I%
  \coloneqq \Set*{\PFC\given \forall m \notin I,\  \|F_m\|_2 \leq H_I(m)}.
\end{align}
It is immediate that if $\bm{F} \in \sliceset_I^y$ then
$\PF \in \mathcal{A}_I^y$ and
$\PFC \in \mathcal{N}_I$. Also, if $\bm{F} \in \consistencyset$, then
\begin{equation}
  \max\{|\SPF(x)|,|\SPFC(x)|\}%
  \leq \max\Big\{ \sum_{m\in I}|F_m(x)|,\, \sum_{m\notin I}|F_m(x)| \Big\}%
  \leq \sum_{m\geq 0}|F_m(x)|%
  \leq \delta.
\end{equation}
So for all $\bm{F} \in \consistencyset$ and for all $I \subseteq \NNInts$, we have
$\|\SPF\|_{\infty} \leq \delta$ and $\|\SPFC\|_{\infty}  \leq \delta$. It follows by \cref{eq:173} that
$\Pi(\consistencyset \cap \sliceset_I^y \mid \obs)$ is bounded from above by
\begin{equation}
  \label{eq:174}
  \frac{%
      \int \1_{\mathcal{A}_I^y}(\PF)\1_{\mathcal{N}_I}(\PFC)\1_{\|\SPF\|_{\infty}\leq\delta}\1_{\|\SPFC\|_{\infty}\leq\delta} e^{n\empmes_n\Phi(\SPF)}e^{n\empmes_n\Phi(\SPFC)}e^{-n\covlog(\SPF,\SPFC)}
    \Pi(\intd\bm{F}) }%
  {%
  \int \1_{\tilde{\mathcal{A}}_I}(\PF)\1_{\mathcal{N}_I}(\PFC)\1_{\|\SPF\|_{\infty}\leq\delta}\1_{\|\SPFC\|_{\infty}\leq\delta}e^{n\empmes_n\Phi(\SPF)}e^{n\empmes_n\Phi(\SPFC)}e^{-n\covlog(\SPF,\SPFC)}
    \Pi(\intd\bm{F})}.
\end{equation}
The main challenge in the proof of the theorem is to control the term
$\covlog(\SPF,\SPFC)$ both in the numerator and denominator, which is deferred
to \cref{sec:contr-vari-covar}. In fact, by
\cref{cor:cov-control-num,cor:cov-control-denom}, if the constants $K_0,K_1,K_2$
in \cref{ass:1} are taken sufficiently large, there is a universal $C > 0$ such
that taking $\delta$ small enough gives
\begin{equation}
  \label{eq:175}
  2e^{C\delta y n\natrate_I^2}
    \frac{%
      \int\1_{\mathcal{A}_I^y}(\PF)\1_{\mathcal{N}_I}(\PFC)\1_{\|\SPF\|_{\infty}\leq\delta}\1_{\|\SPFC\|_{\infty}\leq\delta} e^{n\empmes_n\Phi(\SPF)}e^{n\empmes_n\Phi(\SPFC)}
    \Pi(\intd\bm{F}) }%
  {%
  \int \1_{\tilde{\mathcal{A}}_I}(\PF)\1_{\mathcal{N}_I}(\PFC)\1_{\|\SPF\|_{\infty}\leq\delta}\1_{\|\SPFC\|_{\infty}\leq\delta}e^{n\empmes_n\Phi(\SPF)}e^{n\empmes_n\Phi(\SPFC)}
    \Pi(\intd\bm{F})},
\end{equation}
But $\PF$ is independent of $\PFC$ and $\SPF$ is solely function of $\PF$
(respectively $\SPFC$ and $\PFC$), thus
\begin{align}
  \Pi( \consistencyset \cap \sliceset_I^y \mid \obs)%
  &\leq%
  2e^{C\delta y n\natrate_I^2}
  \frac%
  {\int_{\mathcal{A}_I^y}\1_{\|\SPF\|_{\infty}\leq \delta} e^{n\empmes_n\Phi(\SPF)}
    \Pi(\intd\PF) }%
    {\int_{\tilde{\mathcal{A}}_I}\1_{\|\SPF\|_{\infty}\leq \delta} e^{n\empmes_n\Phi(\SPF)} \Pi(\intd\PF) }\\
    \label{eq:3}
  &=2 e^{C\delta y n\natrate_I^2}
  \frac%
  {\int_{\mathcal{A}_I^y}\1_{\|\SPF\|_{\infty}\leq \delta} e^{n\empmes_n\Phi(\SPF)}
    \Pi(\intd\PF) }%
    {\int_{\tilde{\mathcal{A}}_I} e^{n\empmes_n\Phi(\SPF)} \Pi(\intd\PF) },
\end{align}
where the second line follows because $\PF \in \LKL_I \implies \|\SPF\|_{\infty} \leq \delta$
by construction. It is interesting that $x \mapsto \dens_{\L}(x)e^{\Phi\SPF (x)}$ is
indeed a proper density function, \textit{i.e} it is non-negative and integrates
to $1$. We write $q_{\PF}(x) \coloneqq \dens_{\L}(x)e^{\Phi\SPF (x)}$. We then can
bound the expectation of \cref{eq:3} using the standard approach \textit{à la}
\citeauthor{ghosal-ghosh-vaart-2000-conver}. In particular, we arrive at the
bound of \cref{eq:48} by controlling the $\PP_{\L}$-probability of the event
\begin{equation}
  \label{eq:dr:12}
  \Omega_t \coloneqq
  \Set*{ \obs \given \textstyle\int_{\LKL_I}
    \prod_{i=1}^n \frac{q_{\PF}(X_i)}{p_{\L}(X_i)}%
    \frac{\Pi(\intd\PF)}{\Pi(\LKL_I)}
    \geq e^{-n \delta^2 \natrate_I^2}e^{-\sqrt{2n\delta^2 \natrate_I^2t} - \delta t}}.
\end{equation}

\begin{proposition}
  \label{pro:dr:4}
  Let $\Pi$ be any probability measure supported on the set $\LKL_I$. For all
  $I \subseteq \NNInts$, for all $0 < \delta \leq \log(2)$, for all $t > 0$, and for all
  $n > 0$, $\PP_{\L}^n(\Omega_t) \geq 1 - e^{-t}$.
\end{proposition}

Then, on the event that $\obs \in \Omega_t$, the \cref{eq:3} becomes
\begin{align}
  \Pi( \consistencyset \cap \sliceset_I^y \mid \obs)%
    &\leq%
    2 e^{C\delta ny\natrate_I^2 + n \delta^2\natrate_I^2  + \sqrt{2n \delta^2 \natrate_I^2 t} + \delta t}
      \int_{\mathcal{A}_I^y \cap \Set{\|\SPF\|_{\infty}\leq \eta}}%
    \prod_{i=1}^n \frac{q_{\PF}(X_i)}{p_{\L}(X_i)}
    \frac{\Pi(\intd\PF)}{\Pi(\LKL_I)}.
\end{align}
The conclusion follows because $y \geq 1$, and because if $t \leq 2n\natrate_I^2$ then
$\sqrt{2n \delta^2 \natrate_I^2t} + \delta t \le 2\delta n\natrate_I^2 + \delta t$, while if
$t > 2n\natrate_I^2$ then $\sqrt{2n \delta^2\natrate_I^2 t} + \delta t \leq 2\delta t$. Hence we
can take $\delta_0 = \log(2)$ and $c_0 = C + 2 + \delta_0$.
\end{proof}

\begin{proof}[Proof of \texorpdfstring{\cref{lem:dr:step2}}{Lemma \ref{lem:dr:step2}}]
First we obtain a lower bound on
$\inf\Set{ \He(Q_{\PF},P_{\L})^2 \given \PF \in \psliceset_I^y,\, \|\SPF\|_{\infty} \leq \delta}$.
The following proposition helps.
\begin{proposition}
  \label{pro:lem:2}
  As $\eta \to 0$ it holds
  $\He(Q_{\PF},P_{\L})^2 \geq \frac{1}{8}\EE_{\L}[\SPF^2]e^{O(\eta)}$ for
  all $\SPF$ satisfying $\|\SPF\|_{\infty} \leq \eta$. Then
  $\inf\Set{ \He(Q_{\PF},P_{\L})^2 \given \PF \in
    \psliceset_I^y,\, \|\SPF\|_{\infty}\leq \delta } \geq \frac{y \natrate_{I}^2}{16}$ for
  $\delta$ small enough (but not depending on $I$ nor on $y$).%
\end{proposition}

For $\epsilon > 0$ and any subset $A$ of a metric space equipped with metric $d$, we
let $N(\epsilon, A, d)$ denote the $\epsilon$-covering number of $A$, \textit{i.e.} the
smallest number of balls of radius $\epsilon$ needed to cover $A$. if $d$ is induced by
some norm $\|\cdot\|$, we write $N(\epsilon, A, \|\cdot\|)$. By
\citet[Corollary~1]{ghosal2007b} our \cref{pro:lem:2} implies that for all
$D > 0$, all $y \geq 1$, and all $n \geq 1$ there exists a test $\phi_{n,y}$ such
that%
\begin{equation}
  \EE_{\L}[\phi_{n,y}]%
  \leq \frac{N(\frac{\sqrt{y}\natrate_I }{16}, \mathcal{P}_I^y,\He)}{D} \frac{e^{- \frac{y n \natrate_I^2}{256}}}{1 -
    e^{- \frac{y n\natrate_I^2}{256}} },\quad%
  \sup_{\substack{\PF\in \psliceset_I^y\\\|\SPF\|_{\infty}\leq \delta} }\EE_{\PF}[1 - \phi_{n,y}] \leq D
  e^{- \frac{y n\natrate_I^2}{256}},
\end{equation}
where $\EE_{\PF}$ is understood as the expectation under $Q_{\PF}^{\otimes n}$, and
where
$\mathcal{P}_I^y \coloneqq \Set{q_{\PF} \given \PF \in \mathcal{A}_I^y,\ \|\SPF\|_{\infty} \leq \delta}$.
Using the estimate of \cref{lem:dr:step1} we find that
$\EE_{\L}[\Pi(\consistencyset \cap \sliceset_I^y \mid \obs)\Ind_{\Omega_t}]$ is bounded
by%
\begin{multline}
  \EE_{\L}[\phi_{n,y}\Pi(\consistencyset \cap \sliceset_I^y  \mid \obs)]%
  + \EE_{\L}[(1 - \phi_{n,y})\Pi(\consistencyset \cap \sliceset_I^y  \mid \obs)\Ind_{\Omega_t} ]\\
  \begin{aligned}
    &\leq \EE_{\L}[\phi_{n,y}]%
    + 2e^{2\delta t+ c_0\delta y n\natrate_I^2} \int_{\psliceset_I^y\cap \Set{\|\SPF\|_{\infty} \leq \eta} } \EE_{\PF}[1 -
    \phi_{n,y}]%
    \frac{\Pi(\intd \PF)}{\Pi(\LKL_I)}\\
    &\leq \frac{N(\frac{\sqrt{y}\natrate_I }{16}, \mathcal{P}_I^y,\He)}{D} \frac{e^{- \frac{y n \natrate_I^2}{256}}}{1 -
      e^{- \frac{y n\natrate_I^2}{256}} }%
    + 2D e^{2\delta t +c_0\delta y n \natrate_I^2 - \frac{yn\natrate_I^2}{256}}  \frac{\Pi(\mathcal{A}_I^y)}{\Pi(\LKL_I)}.
  \end{aligned}
\end{multline}
The
previous display is true for any $D > 0$ and thus we can optimize over $D$,
which is also known as the \textit{square-root trick}
\citep{lijoi2005consistency,walker2007rates}. Doing so gives the bound,
\begin{equation}
  \label{eq:16}
  \EE_{\L}[\Pi(\consistencyset \cap \sliceset_I^y \mid \obs)\1_{\Omega_t}]%
  \leq \Big\{ \frac{8e^{2\delta  t + c_0\delta y n\natrate_I^2} N(\frac{\sqrt{y}\natrate_I }{16},\mathcal{P}_I^y,\He)}{1 -
    e^{-y n\natrate_I^2 / 256}}\frac{\Pi(\psliceset_I^y)}{\Pi(\LKL_I)} \Big\}^{1/2}e^{-\frac{yn\natrate_I^2}{256}}.
\end{equation}

To obtain the bound in the statement of the lemma, it is enough to prove
that
$\sup_{y\geq 1}N(\frac{\sqrt{y}\natrate_I}{16},\mathcal{P}_I^y,\He) \leq \exp(c_12^{J_0}|I|)$.
The following lemma helps.
\begin{proposition}
  \label{pro:lem:3}
  There exists $\delta_0 > 0 $ such that for all $\delta \leq \delta_0$, all $I \subseteq \NNInts$, and
  all
  $\PF,\PFalt \in\psliceset_I \cap \Set{\|\SPF\|_{\infty} \leq \delta}$ it holds
  $\He(Q_{\PF},Q_{\PFalt})^2 \leq \frac{1}{2}\EE_{\L}[(\SPF - \SPFalt)^2]$.%
\end{proposition}
Observe that for $\PF,\PFalt \in \psliceset_I$ we have
$\SPF - \SPFalt = \sum_{m\in I}(F_{m} - F_{m}')$, where by construction each
$F_{m} - F_{m}'$ is in $\mathcal{F}_{m}$. Then, by
\cref{pro:coeff:new}-\eqref{item:coeff:4}, and then by
\cref{pro:coeff:new}-\eqref{item:coeff:3},
\begin{align}
  \label{eq:35}
  \EE_{\L}[(\SPF - \SPFalt)^2]%
  &\lesssim \sum_{m\in I}\|F_{m} - F_{m}'\|_2^2
  \lesssim \sum_{m\in I}\sum_{\klm\in B_{m}}\Inner{F_{m} - F_{m}',\basis_{\klm}}^2.
\end{align}
On the other hand, for all $\PF \in \psliceset_I^y$, we have by
\cref{pro:coeff:new} that,
\begin{align}
  \label{eq:150}
  \sum_{m\in I}\sum_{\klm\in B_{m}}\Inner{F_{m},\basis_{\klm}}^{2}%
  \lesssim \sum_{m\in I}\|F_{m}\|_{2}^2%
  \lesssim \EE_{\L}[\SPF^2]%
  \lesssim y \natrate_I^2.
\end{align}
By \cref{eq:35,eq:150} and \cref{pro:lem:3}, we find that
$N(\frac{\sqrt{y}\natrate_I}{16},\mathcal{P}^y_I,\He)$ is no more than the
covering number of a ball of radius $\lesssim \sqrt{y}\natrate_I$ with balls of radius
$\asymp \sqrt{y}\natrate_I$ in $\Reals^{p}$
equipped with the euclidean distance, with
$p = \sum_{m\in I}|B_m| \leq |B_0|\cdot |I|$.
By
\citet[Lemma~4.1]{pollard1990empirical}, this implies that there is a universal
$K > 0$ such that
\begin{equation}
  \label{eq:20}
  N\Big(\frac{\sqrt{y}\natrate_I}{16}, \mathcal{P}_I^y,\He\Big)%
  \leq%
  \max\Big\{1,\,\Big( \frac{3K \sqrt{y}\natrate_I }{\sqrt{y}\natrate_I } \Big)^p\Big\}%
  \leq \max\{1,\,(3K)^{p}\}.
\end{equation}
Finally, by construction it is true that $|B_0| \lesssim 2^{J_0}$.
\end{proof}

\begin{proof}[Proof of \texorpdfstring{\cref{lem:1}}{Lemma \ref{lem:1}}]
We use the fact that for all $u,v \geq 0$ we have
$\min\{u,v\} = \inf_{\beta\in(0,1)}\{u^{1-\beta}v^{\beta}\}$. Then, combining the bounds of
\cref{lem:dr:step2} with the bound of \cref{eq:32}, we get that for any
$\beta \in (0,1)$ and $y \geq 1$%
\begin{align}
  \label{eq:34}
  \frac{\EE_{\L}[\Pi(\consistencyset \cap \sliceset_I^y \mid \obs)\1_{\Omega_t}]}{\sqrt{8}e^{2\delta t}}%
  &\leq \min\Big\{e^{c_0\delta y n\natrate_I^2} \frac{\Pi(\mathcal{A}_I^y)}{\Pi(\LKL_I)},\,%
    \Big(\frac{\exp(-\frac{y n \natrate_I^2}{256} + c_12^{J_0}|I|)}{1 - e^{-yn\natrate_I^2/256}} \frac{\Pi(\mathcal{A}_I^y)}{\Pi(\LKL_I)} \Big)^{1/2}
    \Big\}\\
  &\leq \frac{\exp\big\{-yn\natrate_I^2(\frac{\beta}{512} +c_0\delta\beta - c_0\delta) + \frac{\beta c_12^{J_0} |I|}{2} \big\} }{(1 - e^{-n\natrate_I^2/256})^{\beta/2}}%
    \Big\{\frac{\Pi(\mathcal{A}_I^y)}{\Pi(\LKL_I)} \Big\}^{1-\beta/2}.
\end{align}
Hence, for any $\frac{512\cdot c_0\delta}{1 + 512\cdot c_0\delta} < \beta \leq 1$, writing
$\mu\coloneqq \frac{\beta}{512} +c_0\delta\beta - c_0\delta > 0$ for simplicity, by Hölder's inequality,
\begin{align}
  \label{eq:84}
  \sum_{y \geq 1} e^{-y \mu n \natrate_I^2} \Big\{\frac{\Pi(\mathcal{A}_I^y)}{\Pi(\LKL_I)} \Big\}^{1-\beta/2}
  &\leq \Big\{ \sum_{y\geq 1}e^{-2y\mu n \natrate_I^2/\beta} \Big\}^{\beta/2} \Big\{ \sum_{y\geq 1}\frac{\Pi(\mathcal{A}_I^y)}{\Pi(\LKL_I)}  \Big\}^{1-\beta/2}\\
  &= \frac{e^{-\mu n \natrate_I^2}}{(1 - e^{-2y\mu n \natrate_I^2/\beta} )^{\beta/2}} \Big\{ \frac{\Pi(\mathcal{A}_I)}{\Pi(\LKL_I)}  \Big\}^{1-\beta/2},
\end{align}
where the second line follows since $(\mathcal{A}_I^y)_{y\geq 1}$ is a partition of
$\mathcal{A}_I$. Therefore,\begin{align}
  \label{eq:43}
  \sum_{y\geq 1}\frac{\EE_{\L}[\Pi(\consistencyset \cap \sliceset_I^y \mid \obs)\1_{\Omega_t}]}{\sqrt{8}e^{2\delta t}}%
  &\leq
    \frac{\exp\big\{- \mu n \natrate_I^2 + \frac{\beta c_12^{J_0} |I|}{2} \big\}}{(1 - e^{-n\natrate_I^2/256})^{\beta/2}(1 - e^{-2\mu n\natrate_I^2/\beta})^{\beta/2}}%
    \Big\{\frac{\Pi(\mathcal{A}_I)}{\Pi(\LKL_I)} \Big\}^{1-\beta/2}.
\end{align}
By taking $\alpha = \beta/2$ and $\beta > \frac{1024 \cdot C\delta}{1 + 512\cdot C\delta}$ we have that
$\mu > \frac{1}{2}(\frac{1}{512} + C\delta)\beta$, whence the conclusion.
\end{proof}

\subsection{Control of the covariance terms}
\label{sec:contr-vari-covar}

The major difficulty in establishing the \cref{thm:dr:1} is to prove estimates on
the covariance terms $\covlog(\SPF,\SPFC)$ that are sharp enough. The estimate
used in the proof of \cref{thm:dr:1} are established in the
\cref{cor:cov-control-num,cor:cov-control-denom} below, which are consequences
of the next lemma. The proof of the \cref{lem:4:new} is quite long and is
deferred to \cref{supp-sec:relat-contr-covar}.

\begin{lemma}
  \label{lem:4:new}
  Suppose \cref{ass:1} is
  satisfied with constants $K_0,K_1,K_2 > 0$ sufficiently large. Then, there is
  a constant $C > 0$ such that for all $1 < \delta \leq 1$, for all $I \subseteq \NNInts$, for
  all $\|\SPF\|_{\infty} \leq \delta$, and for all $\PFC \in \mathcal{N}_I$, if $\smooth > 1/2$
  \begin{align}
    \label{eq:65}
    |\covlog(\rSPF,\rSPFC)|%
    &\leq C \delta \EE_{\L}[\SPF^{2}]%
      +\delta \Gamma \sqrt{\frac{\log(n)}{n}}\|F_0\|_2\1_{0\in I}\\%
    &\quad +\delta \Gamma\xi^{-\1_{0\in I}} \Big\{ \sum_{m\in I}\|F_m\|_2^2 \1_{J_{m}\leq j_{n}}\Big\}^{1/2} \sqrt{\frac{\log(n)}{n}}\\ %
    &\quad %
      + \frac{\delta}{\sqrt{n}} \Big\{ \sum_{m\in I} \|F_m\|_2^2 \1_{J_{m}> j_{n}}\Big\}^{1/2},
  \end{align}
  and if $0 < \smooth \leq 1/2$,
  \begin{align}
    \label{eq:127}
    |\covlog(\rSPF,\rSPFC)|%
    &\leq C \delta \EE_{\L}[\SPF^{2}]%
      +\delta \Gamma 2^{-J_0/2}\minimaxrate \|F_0\|_2\1_{0\in I}\\
    &\quad+%
      \delta \Big\{ \sum_{m\in I}\|F_m\|_2^2\1_{J_{m}\leq j_{n}}\Big\}^{1/2}%
      \Big\{\Gamma^2\xi^{-2\1_{0\in I}}\minimaxrate^{2} \sum_{m\in I}2^{-J_{m}}\1_{J_{m}\leq  j_{n}} \Big\}^{1/2}\\
    &\quad%
      + \delta\Big\{\sum_{m\in I}\|F_m\|_2^2\1_{J_{m}>j_{n}}\Big\}^{1/2}%
      \Big\{ \gamma \sum_{m\in I}2^{-J_{m}(2\smooth + 1)}\1_{J_{m}>j_{n}} \Big\}^{1/2}.
  \end{align}
\end{lemma}

\begin{corollary}
  \label{cor:cov-control-num}
  Suppose \cref{ass:1} is satisfied with constants $K_0,K_1,K_2 > 0$
  sufficiently large. Then, there is a constant $C > 0$ such that
  for all $1 < \delta \leq 1$, for all $I \subseteq \NNInts$, for all $y \geq 1$, for all
  $\PF \in \mathcal{A}_I^y$, and for all $\PFC \in \mathcal{N}_I$
  \begin{equation}
    \label{eq:64}
    \covlog(\SPF,\SPFC) \geq - C\delta y\natrate_I^2 - \frac{\delta}{n}.
  \end{equation}
\end{corollary}

\begin{corollary}
  \label{cor:cov-control-denom}
  Suppose \cref{ass:1} is satisfied with constants $K_0,K_1,K_2 > 0$
  sufficiently large. Then, there is a constant $C > 0$ such that for all
  $1 < \delta \leq 1$, for all $I \subseteq \NNInts$,
  \begin{equation}
    \label{eq:11}
    \sup\Set{\covlog(\SPF,\SPFC) \given \PF \in \LKL_{I},\ \PFC \in \mathcal{N}_{I}}%
    \leq C \delta^2 \natrate_{I}^{2}.
  \end{equation}
\end{corollary}

\subsection{Proofs of
  \texorpdfstring{\cref{pro:dr:4,pro:lem:2,pro:lem:3}}{Propositions
    \ref{pro:dr:4} to \ref{pro:lem:3}}}
\label{sec:proof-pro}

\begin{proof}[Proof of \texorpdfstring{\cref{pro:dr:4}}{Proposition \ref{pro:dr:4}}]

The proof is an adaptation of the classical
\citet[Lemma~8.1]{ghosal-ghosh-vaart-2000-conver}. The first step if to remark
that by Jensen's inequality applied to the logarithm%
\begin{align}
  \label{eq:dr:31}
  \log \int_{\LKL_I} e^{n\empmes_n\Phi(\SPF)}\Pi(\intd\PF)%
  &\geq \sum_{i=1}^n\int_{\LKL_I}\Big(\Phi \SPF(X_i) - \EE_{\L}[\Phi
    \SPF] \Big) \Pi(\intd\PF)\\%
  &\quad%
    + n \int_{\LKL_I}\EE_{\L}[\Phi \SPF]\,\Pi(\intd\PF).
\end{align}
For all $\PF \in \LKL_I$, since $\EE_L[\SPF] = 0$, we have whenever
$0 < \delta \leq \log(2)$,
\begin{align}
  \label{eq:15}
  \EE_{\L}[\Phi \SPF]%
  &= - \log \EE_{\L}[e^{\SPF}]\\
  &\geq - \log \EE_{\L}\Big[1 + \SPF%
    + \frac{1}{2}\SPF^2e^{\|\SPF\|_{\infty}}\Big]\\
  &\geq - \log\Big(1%
    + \EE_{\L}[\SPF^2]\Big).
\end{align}
By definition
$\EE_{\L}[\SPF^2] \leq \delta^2 \natrate_I^2$ whenever $\PF \in \LKL_I$. That is
$\EE_{\L}[\Phi \SPF] \geq
-\EE_{\L}[\SPF^2] \geq - \delta^2 \natrate_I^2$ for any
$\PF \in \LKL_I$. Hence,
\begin{equation}
  \label{eq:dr:34}
  \log \int_{\LKL_I} e^{n\empmes_n\Phi(\SPF)}\Pi(\intd\PF)%
  \geq \sum_{i=1}^n\int_{\LKL_I}\Big(\Phi \SPF(X_i) -
  \EE_{\L}[\Phi \SPF] \Big) \Pi(\intd\PF)%
  -  n \delta^2 \natrate_I^2.
\end{equation}
Now we define the random variables
$Z_i \coloneqq \int_{\LKL_I}\big(\Phi \SPF(X_i) - \EE_{\L}[\Phi\SPF] \big)\Pi(\intd\PF)$.
Observe that $\EE_{\L}[Z_i] = 0$, and $\Phi \SPF - \EE_{\L}[\Phi \SPF] = \SPF$, so we
have $|Z_i| \le \delta$ because $\Pi$ is a probability measure. Further, by an
application of Jensen's inequality and Fubini's theorem
\begin{align}
  \label{eq:dr:32}
  \EE_{\L}[Z_i^2]%
  &= \EE_L\Big[\Big( \int_{\LKL_I} \SPF(X_i)\, \Pi(\intd \PF) \Big)^2\Big]\\
  &\leq \EE_{\L}\Big[\int_{\LKL_I}\SPF(X_i)^2\,\Pi(\intd\PF) \Big]\\
  &= \int_{\LKL_I} \EE_{\L}[\SPF^2]\,\Pi(\intd\PF).
\end{align}
Therefore $\EE_{\L}[Z_i^2] \leq \delta^2 \natrate_I^2$, because of the definition
of $\LKL_I$, and because $\Pi$ is a probability measure. By the
\cref{eq:dr:34}, the probability of $\Omega_t^c$ is no more than
the probability of having
$\sum_{i=1}^n Z_i \leq - \delta t - \sqrt{2n \delta^2 \natrate_I^2t}$. The
conclusion of the proposition then follows by Bernstein's inequality
\citep[Theorem~2.10]{boucheron2013concentration}.
\end{proof}

\begin{proof}[Proof of \texorpdfstring{\cref{pro:lem:2}}{Proposition \ref{pro:lem:2}}]

Observe that by definition $q_{\PF}(x) = \dens_{\L}(x)e^{\Phi(\SPF)}$. Also we have
$\EE_{\L}[\SPF] = 0$ and thus $\Phi(\SPF) = \SPF - \log \EE_{\L}[e^{\SPF}]$. We
lower bound $\He(Q_{\PF},P_{\L})$ by obtaining an upper bound on the
Hellinger affinity
$\mathsf{R}(Q_{\PF},P_{\L}) \coloneqq \int_{\domain}
\sqrt{q_{\PF}\dens_{\L}}$ and using that
$\He(Q_{\PF},P_{\L})^2 = 1 -
\mathsf{R}(Q_{\PF},P_{\L})$. Clearly
$\mathsf{R}(Q_{\PF},P_{\L}) =
\EE_{\L}[e^{\frac{1}{2}\Phi(\SPF)}] =
\EE_{\L}[e^{\frac{1}{2}\SPF}]/\EE_{\L}[e^{\SPF}]^{1/2}$. But $\|\SPF\|_{\infty} \leq \eta$, thus
$\EE_{\L}[e^{\SPF}] \geq  1 + \EE_{\L}[\SPF] +
\frac{1}{2}\EE_{\L}[\SPF^2]e^{-\eta} = 1 +
\frac{1}{2}\EE_{\L}[\SPF^2]e^{-\eta}$. Consequently,
$\EE_{\L}[e^{\SPF}]^{1/2} \geq 1 +
\frac{1}{4}\EE_{\L}[\SPF^2]e^{O(\eta)}$. Similarly,
$\EE_{\L}[e^{\frac{1}{2}\SPF}] \leq 1 +
\frac{1}{8}\EE_{\L}[\SPF^2]e^{\eta}$. It follows
$\mathsf{R}(Q_{\PF},P_{\L}) \leq 1 -
\frac{1}{8}\EE_{\L}[\SPF^2]e^{O(\eta)}$.
\end{proof}

\begin{proof}[Proof of \texorpdfstring{\cref{pro:lem:3}}{Proposition \ref{pro:lem:3}}]

Remark that $\EE_{\L}[\SPF] = 0$, so
$\Phi(\SPF) = \SPF - \log\EE_{\L}[e^{\SPF}]$, and similarly for
$\Phi(\SPFalt)$. Since $\|\SPF\|_{\infty} \leq \delta$ then $\|\Phi(\SPF)\|_{\infty} \leq 2\delta$, similarly
for $\SPFalt$. By a
Taylor expansion there is $u \in (\Phi(\SPF),\Phi(\SPFalt))$, and hence
$|u| \leq 2\delta$, such that
$e^{\frac{1}{2}\Phi(\SPF)} = e^{\frac{1}{2}\Phi(\SPFalt)} +
\frac{1}{2}(\Phi(\SPF) - \Phi(\SPFalt)e^u$. That is,
$(e^{\frac{1}{2}\Phi(\SPF)} - e^{\frac{1}{2}\Phi(\SPFalt)})^2 \leq
\frac{1}{4}e^{4\delta}(\Phi(\SPF) - \Phi(\SPFalt))^2$. Then we can bound the
Hellinger distance as follows.
\begin{align}
  \label{eq:37}
  \He(Q_{\PF},Q_{\PFalt})^2%
  &= \frac{1}{2}\int \Big(\sqrt{\dens_{\L} e^{\Phi(\SPF)}} - \sqrt{\dens_{\L}
    e^{\Phi(\SPFalt)}}  \Big)^2\\
  &= \frac{1}{2}\EE_{\L}\big[\big(e^{\frac{1}{2}\Phi(\SPF)} -
    e^{\frac{1}{2}\Phi(\SPFalt)} \big)^2]\\
  &\leq \frac{1}{8}e^{4\delta}\EE_{\L}[(\Phi(\SPF) - \Phi(\SPFalt) )^2 ].
\end{align}
Expanding the square in the last equation and using that
$\EE_{\L}[\SPF] = \EE_{\L}[\SPFalt] = 0$, we find that
\begin{equation}
  \label{eq:52}
  \He(Q_{\PF},Q_{\PFalt})^2%
  \leq \frac{1}{8}e^{4\delta}\EE_{\L}[(\SPF - \SPFalt)^2]%
  +
  \frac{1}{8}e^{4\delta}\log^2\frac{\EE_{\L}[e^{\SPF}]}{\EE_{\L}[e^{\SPFalt}]}.
\end{equation}
Now remark that
$\EE_{\L}[e^{\SPF}] = \EE_{\L}[e^{\SPFalt}e^{\SPF - \SPFalt}] =
\EE_{\L}[e^{\SPFalt}] + \EE_{\L}[e^{\SPFalt}(e^{\SPF - \SPFalt} -1 )]$, and
hence
$|\EE_{\L}[e^{\SPF}]/\EE_{\L}[e^{\SPFalt}] - 1| \leq e^{O(\delta)}\EE_{\L}[|\SPF
- \SPFalt|] \leq e^{O(\delta)}\EE_{\L}[(\SPF - \SPFalt)^2]^{1/2}$. It follows that
the second term of the rhs of \cref{eq:52} is bounded by
$\frac{1}{8}e^{O(\delta)} \EE_{\L}[(\SPF - \SPFalt)^2]$, and hence
$\He(Q_{\PF},Q_{\PFalt}) \leq \frac{1}{4}e^{O(\delta)}\EE_{\L}[(\SPF -
\SPFalt)^2]$.
\end{proof}

\subsection{Proofs of
  \texorpdfstring{\cref{cor:cov-control-num,cor:cov-control-denom}}{Corollaries
    \ref{cor:cov-control-num} and \ref{cor:cov-control-denom}}}

\begin{proof}[Proof of \cref{cor:cov-control-num}: the case where $\smooth > 1/2$]

By construction, if $m\in I$ and $\bm{F} \in \psliceset_{I}$, then
$\|F_m\|_2 \geq H_I(m)$.
Thus, if $I \cap \Set{m \given J_{m} \leq j_{n}} \ne \varnothing$, for all $\bm{F}\in \psliceset_I$,
\begin{equation}
  \label{eq:53}
 (\Gamma \xi^{-\1_{0\in I}})^2 \frac{\log(n)}{n} \leq \sum_{m\in I}H_I(m)^2\1_{J_{m}\leq j_{n}} \leq  \sum_{m\in I}\|F_m\|_2^2\1_{J_{m}\leq j_{n}}.
\end{equation}
Also if $0\in I$, then $\Gamma^2\log(n)/n \leq \|F_0\|_2^2$ . Therefore
\cref{lem:4:new} and Young's inequality imply
\begin{multline}
  \label{eq:66}
  |\covlog(\SPF,\SPFC)|%
  \leq C\delta\EE_{\L}[\SPF^{2}]%
    + \delta \|F_0\|_2^2\1_{0\in I}%
    + \delta \sum_{m\in I}\|F_m\|_2^2\1_{J_{m}\leq j_{n}}\\
    + \frac{\delta}{2} \sum_{m\in I}\|F_m\|_2^2\1_{J_{m}>j_{n}}+ \frac{\delta}{2n}.
\end{multline}
The conclusion follows since $\sum_{m\in I}\|F_m\|_2^2 \asymp \EE_{\L}[\SPF^2] $ by
\cref{pro:coeff:new}-\eqref{item:coeff:4}.
\end{proof}

\begin{proof}[Proof of \cref{cor:cov-control-num}: the case where $0 < \smooth \leq 1/2$]
By construction, if $m\in I$ and $\bm{F} \in \psliceset_{I}$, then
$\|F_m\|_2 \geq H_I(m)$.
Thus, if $I \cap \Set{m \given J_{m} \leq j_{n}} \ne \varnothing$, for all $\bm{F}\in \psliceset_I$,
\begin{align}
  \label{eq:141}
  (\Gamma \xi^{-\1_{0\in I}})^2\minimaxrate^{2}\sum_{m\in I}2^{-J_{m}}\1_{J_m\leq j_n}%
  \leq \sum_{m\in I}\|F_m\|_2^2\1_{J_{m}\leq j_{n}}.
\end{align}
Similarly if $I \cap \Set{m \given J_{m}> j_{n}} \ne \varnothing$, we have by
construction for all $\bm{F}\in \psliceset_{I}$
\begin{align}
  \label{eq:142}
  \gamma^2\sum_{m\in I}2^{-J_{m}(2\smooth + 1)}\1_{J_{m}>j_{n}}%
  \leq \sum_{m\in I}\|F_m\|_2^2\1_{J_{m}> j_{n}}.
\end{align}
Also of $0\in I$, then $\Gamma^2 2^{-J_{0}}\minimaxrate^{2} \leq \|F_0\|_2^2$.
Therefore \cref{lem:4:new} implies
\begin{align}
  \label{eq:143}
  |\covlog(\SPF,\SPFC)|%
  &\leq C\delta\EE_{\L}[\SPF^{2}] + \delta\|F_0\|_2^2\1_{0\in I}%
    + \delta  \sum_{m\in I}\|F_m\|_2^2.
\end{align}
The conclusion follows since $\sum_{m\in I}\|F_m\|_2^2 \asymp \EE_{\L}[\SPF^2] $ by
\cref{pro:coeff:new}-\eqref{item:coeff:4}.
\end{proof}

\begin{proof}[Proof of \cref{cor:cov-control-denom}]
The proof follows immediately from \cref{lem:4:new}, from the fact that
$\sum_{m\in I}\|F_m\|_2^2 \asymp \EE_{\L}[\SPF^2] $ by
\cref{pro:coeff:new}-\eqref{item:coeff:4}, and from the definition of
$\LKL_I$.
\end{proof}

\section{Auxiliary results and remaining proofs}
\label{sec:remaining-proofs}

\subsection{Relations between norms}
\label{sec:dr:auxiliary-results}

In many places we need to relate norm of various functions. In this section we
collect the propositions that serve this purpose.

\begin{proposition}
  \label{pro:coeff:new}
  Let $J_{0}$ be chosen large enough. Then the following are true.%
  \begin{enumerate}
    \item\label{item:coeff:1} For all $m \in \NNInts$ and all
          $F \in \mathcal{F}_{m}$,
          $F = \sum_{(j,k)\in B_{m}}\Inner{F,\basis_{j,k}}(\basis_{j,k} - \EE_{\L}[\basis_{j,k}])$;
    \item\label{item:coeff:2} For all $m \in \NNInts$, all $F\in \mathcal{F}_{m}$,
          and all $(j,k) \notin B_{0} \cup B_{m} \implies \Inner{F,\basis_{j,k}} = 0$.
    \item \label{item:coeff:3} There exist constants $C_{1},C_{2}> 0$ such that
          for all $m \in \NNInts$ and all $F\in \mathcal{F}_{m}$,
          $C_{1}\sum_{\klm\in B_{m}}\Inner{F,\basis_{\klm}}^{2} \leq \|F\|_{2}^2 \leq C_{2}\sum_{\klm\in B_{m}}\Inner{F,\basis_{\klm}}^{2}$.
    \item\label{item:coeff:4} There exist constants $C_1,C_2> 0$ such that for
          all $J \subseteq \NNInts$, for all collections
          $\Set{F_m \in \mathcal{F}_m \given m \in J }$,
          $C_{1}\sum_{m\in J}\|F_{m}\|_2^2 \leq \EE_{\L}[(\sum_{m\in J}F_{m})^2] \leq C_{2}\sum_{m\in J}\|F_{m}\|_{2}^{2}$.
    \item\label{item:norm:2} There exist constants $C_1,C_2 > 0$ such that
          $\sup_x\sum_k|\basis_{j,k}(x)| \leq C_12^{j/2}$ for all $j \geq 0$, and
          $\sum_k|\EE_{\L}[\basis_{j,k}]| \leq C_22^{j/2}$ for all $j\geq 0$. Consequently,
          $\sup_x\sum_k|\basis_{j,k}(x) - \EE_{\L}[\basis_{j,k}]| \leq 2\max\{C_1,C_2\} 2^{j/2}$
          for all $j \geq 0$;
  \end{enumerate}
\end{proposition}

\begin{proposition}
  \label{pro:1}
  Suppose \cref{ass:1} is valid. There exists a universal constant $C>0$ such that for all $I \subseteq \NNInts$,
  \begin{align}
    \label{eq:110}
    \natrate_I^2%
    &\geq C\Big(
       \frac{\Gamma^2\log(n)}{n}\sum_{m\in I }\xi^{-2\1_{0\in I}\1_{m\ne 0}}\1_{J_m\leq j_n}%
      + \gamma^2  \sum_{m\in I}2^{-J_m(2\smooth + 1)}\1_{J_m>j_n} \Big).
  \end{align}
\end{proposition}

\subsection{Proofs of the lemmas used in the guidelines of
  \texorpdfstring{\cref{sec:general-approach}}{Section
    \ref{sec:general-approach}} and proof of the
  \texorpdfstring{\cref{cor:strategybound}}{Corollary \ref{cor:strategybound}}}
\label{sec:proofs-lemma-used}

\begin{proof}[Proof of \texorpdfstring{\cref{lem:distance-bound}}{Lemma \ref{lem:distance-bound}}]
Write
$g = \sum_{m \geq 0}F_m$ for simplicity. Then, remark that
$\EE_{\L}[g] = 0$, and thus
$\EE_L[e^g] \geq \EE_L[1 + g + \frac{1}{2}g^2e^{- \|g\|_{\infty}}] = 1 +
\frac{1}{2}\EE_{\L}[g^2]e^{-\|g\|_{\infty}} \geq 1$, and with the same argument
$1 \leq \EE_{\L}[e^g] \leq 1 + \frac{1}{2}\EE_L[g^2]e^{\|g\|_{\infty}}$. It
follows from \cref{eq:78} that $|\L_{\bm{F}} - \L| \leq |g| + |\log
\EE_{\L}[e^g]| = |g| + \log \EE_L[e^g] \leq |g| +
\frac{1}{2}\EE_L[g^2]e^{\|g\|_{\infty}}$. Since
$\EE_{\L}[g^2] \leq \|g\|_{\infty}^2$,
\begin{align}
  \|\L_{\bm{F}} - \L\|_{\infty}%
  &\leq \Big\|\sum_{m\geq 0}F_m\Big\|_{\infty}\Big(1 + \frac{1}{2}\Big\|\sum_{m\geq 0}F_m\Big\|_{\infty} e^{\|\sum_{m\geq 0}F_m\|_{\infty}} \Big)\\
  \label{eq:112}
  &\lesssim \Big\|\sum_{m\geq 0}F_m\Big\|_{\infty},
\end{align}
because by assumption $\bm{F} \in \consistencyset$. Further, by
\cref{pro:coeff:new},
\begin{align}
  \Big\|\sum_{m\geq 0}F_m\Big\|_{\infty}%
  &\leq \sup_{x\in \domain}\sum_{m\geq 0}\sum_{(j,k)\in B_m}|\Inner{F_m,\basis_{j,k}}||\basis_{j,k}(x) - \EE_{\L}[\basis_{j,k}]|\\
  &\leq \sup_{x \in \domain} \sum_{j\geq J_0} \sup_{m : J_m=j}\sup_{(j,k)\in B_m}|\Inner{F_m,\basis_{j,k}}| \sum_k|\basis_{j,k}(x) - \EE_{\L}[\basis_{j,k}]|\\
  \label{eq:133}
  &\lesssim \sum_{j \geq J_0} \sup_{m:J_m=j}\|F_m\|_2 2^{j/2}.
\end{align}
The conclusion follows by combining \cref{eq:112,eq:133}.
\end{proof}

\begin{proof}[Proof of \texorpdfstring{\cref{lem:partition}}{Lemma \ref{lem:partition}}]
We first establish that $(\sliceset_I)_{I\subseteq \NNInts}$ is a partition of $\mathcal{F}$.
Pick $\bm{F} \in \mathcal{F}$ arbitrary. We want to show
that there exists a unique $I \subseteq \NNInts$ such that $\bm{F} \in \sliceset_I$. We
have the following two possibilities:
\begin{itemize}
  \item If $\|F_0\|_2 \geq \rho_0 \Gamma$, choose
  $I = \Set{0} \cup \Set{m \geq 1 \given \|F_m\|_2 > \rho_m \Gamma \xi^{-1},\, J_m \leq j_n } \cup \Set{m \geq 1 \given \|F_m\|_2 > \gamma 2^{-J_m(\smooth + 1/2)},\, J_m>j_n}$.
  \item If $\|F_0\|_2 < \rho_0 \Gamma$, choose
  $I = \Set{m\geq 1 \given \|F_m\|_2 > \rho_m \Gamma,\, J_m \leq j_n} \cup \Set{m \geq 1 \given \|F_m\|_2 > \gamma2^{-J_m(\smooth +1/2)},\, J_m >j_n}$.
\end{itemize}
The index set $I$ is uniquely defined by $\bm{F}$, and $\bm{F} \in \sliceset_I$.
We now prove the second claim. Let $A \coloneqq \bigcup_{I \ne \varnothing}\sliceset_I$.
We can decompose $A$ as $A_{1}\cup A_{2}$ where
$A_{1} \coloneqq \bigcup_{I \ne \varnothing,0\in I}\sliceset_I = \bigcup_{I\subseteq \NNInts,0\in I}\sliceset_I$,
and $A_{2} \coloneqq \bigcup_{I\ne\varnothing, 0\notin I}\sliceset_I$. Remark that
$A_{1} = \Set{\bm{F}\in \mathcal{F} \given \|F_0\|_2\geq  \rho_0 \Gamma }$, and
$A_2 = A_1^c \cap \Set{\bm{F} \in \mathcal{F} \given \exists m\geq 1,\ \|F_m\|_2> H_I(m)}$.
Note that if $0 \in I$ then $A_1^c$ is empty, so
\begin{equation}
  \label{eq:12}
  A_{2} = A_1^c \bigcap %
  \Set*{\bm{F} \in \mathcal{F} \given \exists m\geq 1,\ \|F_m\|_2\geq \rho_m \Gamma\xi^{-1}\1_{J_m \leq j_n} + \gamma 2^{-J_m(\smooth + 1/2)}\1_{J_m > j_n}
  }.
\end{equation}
The conclusion follows since $A^c = A_1^c \cap A_2^c$ and since $\xi > 1$.
\end{proof}

\begin{proof}[Proof of \cref{cor:strategybound}: the case where $\smooth > 1/2$]
In view of \cref{lem:distance-bound,lem:partition}, it is sufficient to show
that
\begin{equation}
  \label{eq:135}
  \sum_{j=J_0}^{j_n}\Gamma\rho_m 2^{j/2} + \sum_{j>j_n}\gamma 2^{-j\smooth} \lesssim \sasrate.
\end{equation}
But, $\gamma\sum_{j>j_n}2^{-j\smooth} \lesssim \gamma 2^{-j_n\smooth} \lesssim (\Gamma / \gamma)^{\frac{2\smooth}{2\smooth + 1}}\minimaxrate$
by the
definition of $j_n$ in \cref{eq:44}. On the other hand
$\sum_{j=J_0}^{j_n}\Gamma\rho_m 2^{j/2} \lesssim \Gamma \sqrt{\log(n)/n}2^{j_n/2} \lesssim \Gamma (\gamma/ \Gamma)^{\frac{1}{2\smooth + 1}} \minimaxrate$,
still by \cref{eq:44}.
\end{proof}

\begin{proof}[Proof of \cref{cor:strategybound}: the case where $0 < \smooth \leq  1/2$]
  As for the other case, it is enough to show that \cref{eq:135} holds true. In
  this cas, $\gamma \sum_{j> j_n}2^{-j\smooth} \lesssim \gamma 2^{-j_n\smooth} \lesssim \Gamma \minimaxrate^2$
  by \cref{eq:44}. Also,
  $\sum_{j=J_0}^{j_n}\Gamma\rho_m 2^{j/2} \leq \Gamma j_n \minimaxrate \lesssim \log(\Gamma \minimaxrate / \gamma)\Gamma\minimaxrate \lesssim \log(n)\minimaxrate$,
  again by \cref{eq:44}.
\end{proof}



\subsection{Proofs of \texorpdfstring{\cref{pro:coeff:new,pro:1}}{Propositions
    \ref{pro:coeff:new} and \ref{pro:1}}}
\label{sec:proof-pro:c-pro:1}

\begin{proof}[Proof of \cref{pro:coeff:new}, \cref{item:coeff:1}]
  By construction we know that there are numbers $a_{j,k} \in \Reals$ such that
  $F = \sum_{(j,k) \in B_{m}} a_{j,k}(\basis_{j,k} - \EE_{\L}[\basis_{j,k}])$. We
  note that if $m \geq 1$ the coefficients $(a_{j,k})$ are uniquely determined by
  $a_{j,k} = \Inner{F,\basis_{j,k}}$, because $J_{m} > J_{0}$ is large enough
  such that all $\Inner{\basis_{j,k},1} = 0$ for all $(j,k) \in B_{m}$. Thus
  $\Inner{F,\basis_{j,k}} = \sum_{(j',k')\in B_{m}}a_{j',k'}\Inner{\basis_{j',k'},\basis_{j,k}} = a_{j,k}$,
  for any $(j,k) \in B_{m}$. This establishes the proof for $m\geq 1$. For $m=0$, it
  is the case that $F_{0}$ is in the span of
  $\Set{\basis_{j,k} \given (j,k) \in B_0}$ (because the constants are included in
  the span), and thus $F_{0}$ can be uniquely written as
  $F_{0} = \sum_{(j,k)\in B_{0}}\Inner{F_{0},\basis_{j,k}} \basis_{j,k}$. But by
  construction, $\EE_{\L}[F_{0}] = 0$, so in fact
  $F_{0} = \sum_{(j,k) \in B_{0}}\Inner{F_{0},\basis_{j,k}}(\basis_{j,k} - \EE_{\L}[\basis_{j,k}])$.
\end{proof}

\begin{proof}[Proof of \cref{pro:coeff:new}, \cref{item:coeff:2}]
This follows from the \cref{item:coeff:1} and because for $(j,k) \notin B_{0}$ and $J_{0}$
large enough, we have $\Inner{1,\basis_{j,k}} = 0$. Therefore,
it it the case that $\Inner{F,\basis_{j,k}} = \sum_{(j',k')\in B_{m}}\Inner{F,\basis_{j',k'}}\Inner{\basis_{j',k'},\basis_{j,k}}$.
By orthogonality of the wavelet basis, the previous is either $0$ if
$(j,k) \notin B_{m}$, or $\Inner{F,\basis_{j,k}}$ otherwise.
\end{proof}

\begin{proof}[Proof of \cref{pro:coeff:new}, \cref{item:coeff:3}]
  The lower bound is immediate because
$\|F|_2^2 = \sum_{(j,k) \in \kset}\Inner{F,\basis_{j,k}}^2 \geq \sum_{(j,k)\in B_{m}}\Inner{F,\basis_{j,k}}^2$,
so indeed $C_1 = 1$ works. For the upper bound, we note that because
$\|\L\|_{\infty} \lesssim 1$ we have $\|F_{m}\|_2^2 \asymp \EE_{\L}[F_{m}^2]$, and by
\cref{item:coeff:1}
\begin{align}
  \label{eq:9}
  \EE_{\L}[F_m^2]%
  &= \EE_{\L}\Big[\Big(\sum_{(j,k) \in B_{m}}\Inner{F,\basis_{j,k}}\basis_{j,k} - \EE_{\L}\Big[\sum_{(j,k) \in B_{m}}\Inner{F,\basis_{j,k}}\basis_{j,k}\Big] \Big)^{2} \Big]\\
  &\leq \EE_{\L}\Big[\Big(\sum_{(j,k) \in B_{m}}\Inner{F,\basis_{j,k}}\basis_{j,k} \Big)^{2} \Big]\\
  &\lesssim \Big\|\sum_{(j,k) \in B_{m}}\Inner{F,\basis_{j,k}}\basis_{j,k} \Big\|_{2}^2\\
  &= \sum_{(j,k)\in B_{m}}\Inner{F,\basis_{j,k}}^2,
\end{align}
where the last line follows by the orthogonality of the wavelet basis.
\end{proof}




\begin{proof}[Proof of \cref{pro:coeff:new}, \cref{item:coeff:4}]
We start with the upper bound, which follows from similar arguments than those
of the \cref{item:coeff:3}. Indeed, recall that $\EE_{\L}[g^2] \asymp \|g\|_2^2$ for all
$g$ because $\|L\|_{\infty} \lesssim 1$, hence
\begin{align}
  \label{eq:104}
  \EE_{\L}\Big[\Big(\sum_{m\in J}F_{m}\Big)^2\Big]
  &= \EE_{\L}\Big[\Big( \sum_{m\in J}\sum_{(j,k) \in B_{m}}\Inner{F_{m},\basis_{j,k}}\basis_{j,k} - \EE_{\L}\Big[\sum_{m\in J}\sum_{(j,k) \in B_{m}}\Inner{F_{m},\basis_{j,k}}\basis_{j,k} \Big]  \Big)^{2} \Big]\\
  &\leq \EE_{\L}\Big[\Big( \sum_{m\in J}\sum_{(j,k) \in B_{m}}\Inner{F_{m},\basis_{j,k}}\basis_{j,k}\Big)^{2}\Big]\\
  &\lesssim \Big\|\sum_{m\in J}\sum_{(j,k) \in B_{m}}\Inner{F_{m},\basis_{j,k}}\basis_{j,k} \Big\|_{2}^{2}\\
  &= \sum_{m\in J}\sum_{(j,k) \in B_{m}}\Inner{F_{m},\basis_{j,k}}^{2}
\end{align}
Then, by the \cref{item:coeff:2},
\begin{align}
  \label{eq:27}
  \EE_{\L}\Big[\Big(\sum_{m\in J}F_{m}\Big)^2\Big]%
  \lesssim \sum_{m\in J}\|F_{m}\|_2^{2}.
\end{align}
We now proceed with the lower bound. By the \cref{item:coeff:1},
\begin{align}
  \Big\|\sum_{m\in J}F_{m}\Big\|_2^2%
  &= \sum_{(j,k)\in \kset}\Big\langle\sum_{m'\in J}F_{m'},\basis_{j,k}\Big\rangle^{2}\\
  &\geq \sum_{m \geq 1}\sum_{(j,k)\in B_{m}}\Big\langle\sum_{m'\in J}\sum_{(j',k') \in B_{m'}}\Inner{F_{m'},\basis_{j',k'}}(\basis_{j',k'} - \EE_{\L}[\basis_{j',k'}]),\basis_{j,k}\Big\rangle^{2}\\
  &= \sum_{m \geq 1}\sum_{(j,k)\in B_{m}}\Big\langle\sum_{m'\in J}\sum_{(j',k') \in B_{m'}}\Inner{F_{m'},\basis_{j',k'}}\basis_{j',k'},\basis_{j,k}\Big\rangle^{2}\\
  &= \sum_{m\in J}\sum_{(j,k)\in B_{m}}\Inner{F_{m},\basis_{j,k}}^2\1_{m\ne 0},
\end{align}
where the third line follows because $\Inner{1,\basis_{j,k}} = 0$ for all
$(j,k) \notin B_{0}$, and the last line by orthogonality of the wavelet basis.
Therefore by the \cref{item:coeff:3} it must be the case that
\begin{align}
  \label{eq:134}
  \EE_{\L}\Big[\Big(\sum_{m\in J}F_{m}\Big)^2\Big]%
  \gtrsim \sum_{m\in J}\|F_{m}\|_2^2\1_{m\ne 0}.
\end{align}
The last display gives the proof in the case where $0 \notin J$. We now assume that
$0 \in J$, which is a more delicate case. In this situation, we have that
$F_{0}= \sum_{m\in J}F_{m} - \sum_{m\in J}F_{m}\1_{m\ne 0}$, and thus
\begin{align}
  \EE_{\L}[F_{0}^{2}]%
  &\leq 2\EE_{\L}\Big[\Big( \sum_{m\in J}F_m\Big)^{2} \Big] +  2\EE_{\L}\Big[\Big( \sum_{m\in J}F_m\1_{m\ne 0} \Big)^{2} \Big]\\
    \label{eq:103}
  &\lesssim  \EE_{\L}\Big[\Big( \sum_{m\in J}F_m\Big)^{2} \Big]%
    + \sum_{m\in J}\|F_{m}\|_2^2\1_{m\ne 0},
\end{align}
where the second line follows from the upper bound of \cref{eq:27} applied to
the index set $J \backslash \Set{0}$. Combining \cref{eq:134,eq:103},
\begin{align}
  \label{eq:105}
  \|F_0\|_2^2 \lesssim \EE_{\L}[F_0^2]%
  \lesssim \EE_{\L}\Big[\Big( \sum_{m\in J}F_m\Big)^{2} \Big].
\end{align}
Now if we combine the \cref{eq:134,eq:105}, we have indeed
\begin{align}
  \label{eq:128}
 \EE_{\L}\Big[\Big( \sum_{m\in J}F_m\Big)^{2} \Big]%
  &\gtrsim \max\Big\{\sum_{m\in J}\|F_m\|_2^2\1_{m\ne 0},\, \|F_0\|_2^2 \Big\}\\%
  &\geq \frac{1}{2}\sum_{m\in J}\|F_m\|_2^2\1_{m\ne 0} +  \frac{1}{2}\|F_0\|_2^2\\
  &= \frac{1}{2} \sum_{m\in J}\|F_m\|_2^2.
\end{align}
\end{proof}

\begin{proof}[Proof of \cref{pro:coeff:new}, \cref{item:norm:2}]
The first claim is a well-known localization properties of the wavelet
basis. The second fact follows because $\EE_L[\basis_{j,k}] \leq
\|p_L\|_{\infty}\|\basis_{j,k}\|_1 \lesssim \|p_{\L}\|_{\infty}2^{-j/2}$, and
because there are no more than $2^j$ wavelets at each level $j \geq 0$. The
third fact is obvious.
\end{proof}

\begin{proof}[Proof of \texorpdfstring{\cref{pro:1}}{Proposition \ref{pro:1}}]
From the definition of $\natrate_I$ and from
\cref{pro:coeff:new}-\eqref{item:coeff:4}, it is immediate that
$\natrate_I^2 \gtrsim \sum_{m\in I}H_I(m)^2$. If $\smooth > 1/2$ then
the result is immediate. In case $0 < \smooth \leq 1/2$, then we note that by
definition of $j_n$
\begin{equation}
  \label{eq:70}
  \gamma 2^{-j_n(\smooth +1/2)} \geq \Gamma 2^{-j_n /2} \minimaxrate%
  \implies 2^{-j_n} \geq \Big(\frac{\Gamma}{\gamma}\Big)^{\frac{1}{\smooth}} \Big(\frac{\log(n)}{n}
  \Big)^{\frac{1}{2\smooth + 1}}.
\end{equation}
Therefore,
\begin{align}
  \label{eq:33}
  \sum_{m\in I}\rho_m^2\1_{J_m\leq j_n}%
  &=  \sum_{m\in I} 2^{-J_m} \minimaxrate^2\1_{J_m\leq j_n}\\
  &\geq 2^{-j_n}\minimaxrate^2 \sum_{m\in I}\1_{J_m\leq j_n}\\
  &= \Big(\frac{\Gamma}{\gamma }\Big)^{\frac{1}{\smooth}}\cdot \frac{\log(n)}{n}\sum_{m\in I}\1_{J_m\leq j_n}\\
  &\geq \frac{\log(n)}{n}\sum_{m\in I}\1_{J_m\leq j_n},
\end{align}
where the last line is true under \cref{ass:1}.
\end{proof}

\section*{Acknowledgements}

\par This work was supported by U.S. Air Force Office of Scientific Research
grant \#FA9550-15-1-0074. The author also thanks Daniel M. Roy for helpful
discussions and the opportunity to work on this project.

%
  \putbib%
  \stopcontents[main]%
\end{bibunit}

\newpage

\setcounter{page}{1}
\setcounter{section}{0}
\setcounter{table}{0}
\setcounter{figure}{0}
\setcounter{theorem}{0}
\setcounter{proposition}{0}
\setcounter{lemma}{0}
\setcounter{equation}{0}

\renewcommand{\thepage}{S\arabic{page}}
\renewcommand{\thesection}{S\arabic{section}}
\renewcommand{\thetable}{S\arabic{table}}
\renewcommand{\thefigure}{S\arabic{figure}}
\renewcommand{\thetheorem}{S\arabic{section}.\arabic{theorem}}
\renewcommand{\theproposition}{S\arabic{section}.\arabic{proposition}}
\renewcommand{\thelemma}{S\arabic{section}.\arabic{lemma}}
\renewcommand{\theequation}{S\arabic{section}.\arabic{equation}}

\hypersetup{hypertexnames=false,colorlinks,citecolor=blue,urlcolor=blue,linkcolor=cyan,filecolor=blue,breaklinks=true}

\begin{frontmatter}
  \title{Adaptive Bayesian density estimation in sup-norm : Supplementary Material}%
  \runtitle{Adaptive Bayesian density estimation in sup-norm}%

  \begin{aug}
    \author{\fnms{Zacharie} \snm{Naulet}\ead[label=e1]{zacharie.naulet@universite-paris-saclay.fr}}%

    \runauthor{Zacharie Naulet}%

    \affiliation{Universit{\'e} Paris-Saclay\thanksmark{m1}}%

    \address{Universit{\'e} Paris-Saclay\\
      Laboratoire de math{\'e}matiques d'Orsay\\
      91405, Orsay, France\\
      \printead{e1} }
  \end{aug}


\end{frontmatter}

\begin{bibunit}[imsart-nameyear]
  \startcontents[supp]%
  \section{Organization}
\label{sec:introduction}

This document is supplementary material for the article \textit{Adaptive
  Bayesian density estimation in sup-norm}. It contains the missing proofs for
the \textit{spike-and-slab} prior example. We refer to the main document for all
the definitions.
\begin{itemize}
  \item In \cref{sec:sm:notations-1}, we introduce some new notations that were
  not needed in the main document, but which we will need in the supplemental.
  \item In \cref{sec:sm:hell-contr}, we prove the posterior concentration on
  small Hellinger neighborhoods of the true density. This is the first step
  toward concentration in stronger distances. %
  \item In \cref{sec:sm:uniform-consistency}, we prove posterior concentration
  on $\LP{2}$ neighborhoods and uniform posterior consistency, which is needed
  to prove the \cref{main-lem:consistencyset} stated in the main document.
  \item In \cref{sec:sm:proof-lem:consistency}, we give the proofs that are
  missing in the main document. In particular, the \cref{main-lem:consistencyset}
  establishing the posterior concentration on $\consistencyset$ is proved in
  \cref{sec:sm:proof-lem:consistency}, the \cref{main-lem:4:new} controlling the
  covariance terms is proved in \cref{sec:relat-contr-covar}, and finally
  \cref{main-lem:2,main-lem:3} are proved in \cref{sec:proofs-lem:2-lem:3}.

\end{itemize}

Every section, subsection, theorem, etc. of the supplemental has label prefixed
by S and is cited in cyan. References to the main document are cited in blue
with no prefix.

\section{Notations}
\label{sec:sm:notations-1}

We use the same conventions as in the main paper. We furthermore make use of the
following measures of discrepancy between probability distributions. The
Kullback--Leibler divergence is written
$\KL(P,Q) \coloneqq \int_0^1 p \log(p/q)$. We also use the second-order measure
of discrepancy $\VKL(P,Q) \coloneqq \int_0^1 p \log^2(p/q)$.

\smallskip%
For convenience, we also define the following sequence norms on $\Theta$. We
denote the usual $\ell_2$ norm by
$\|\bm\cbasis\|_2^2 \coloneqq \sum_{(j,k) \in \kset}|\cbasis_{j,k}|^2$. In
addition, we define the mixed $\ell_{1,\infty}$ norm such that
$\|\bm\cbasis\|_{1,\infty} \coloneqq \sum_{j\geq
  0}\max_k|\cbasis_{j,k}|2^{j/2}$. It is a well-known fact that if
$f = \sum_{(j,k)\in \kset}\cbasis_{j,k}\basis_{j,k}$, then
$\|f\|_{\infty} \lesssim \|\bm\cbasis\|_{1,\infty}$, and $\|f\|_2 = \|\bm\cbasis\|_2$.

\section{Posterior concentration on Hellinger balls}
\label{sec:sm:hell-contr}

The starting point to the proof of the concentration of the posterior in strong
distances is to first establish the contraction of the posterior for
$\pmes_{\bm\cbasis}$ on small Hellinger neighborhoods of $\pmes_{\L}$. We
will prove that for
$\minimaxrate \coloneqq (\log(n)/n)^{\smooth/(2\smooth + 1)}$, the
spike-and-slab log-density prior satisfies for $M > 0$ large enough,
\begin{equation}
  \label{eq:sm:1}
  \sup_{\L \in \dclass(R,\smooth)}\EE_{\L}\Pi\big(\bm\cbasis\,:\, \He(\pmes_{\bm\cbasis},\pmes_{\L}) > M\minimaxrate  \mid
  \obs \big) = o(1),\qquad n\rightarrow \infty.
\end{equation}
This will prove the \cref{main-thm:hellinger-adapt}. One can
notice that $\minimaxrate$ is not the optimal rate for the Hellinger distance.
This is a well-known consequence of the fact that prior independence of the
wavelets coefficients cannot yield optimal contraction on Hellinger or $\LP{2}$
balls \citep{hoffmann2015adaptive,cai2008information}.

\smallskip%
\par \Cref{eq:sm:1} is obtained classically, as a consequence of
\citet[Theorem~2.1]{ghosal-ghosh-vaart-2000-conver} combined with
\cref{pro:sm:3,pro:sm:4} below.

\begin{proposition}
  \label{pro:sm:4}
  Let $\Pi$ be the spike-and-slab prior described in
  \cref{main-sec:spike-slab-log}. Assume $\L \in \dclass(R,\smooth)$ for some
  $0 < \smooth_0 \leq \smooth \leq S$ and let
  $\minimaxrate \coloneqq (\log(n)/n)^{\smooth/(2\smooth + 1)}$. Then, there
  exists $C > 0$ such that for $n$ large enough,
  \begin{equation}
    \Pi\big(\KL(\pmes_{\L},\pmes_{\bm\cbasis}) \leq \minimaxrate^2,\,
    \VKL(\pmes_{\L},\pmes_{\bm\cbasis}) \leq \minimaxrate^2\big)%
    \geq \exp\{-C n\minimaxrate^2\}.
  \end{equation}
\end{proposition}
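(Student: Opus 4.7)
The plan is to exhibit a set $A\subset\Theta$ with $\Pi(A)\geq e^{-Cn\minimaxrate^2}$ that is contained in the required KL--VKL neighborhood of $\pmes_0$. Fix the critical level $J^{*}$ by $2^{J^{*}(2\beta+1)}\asymp n/\log n$, so that $2^{J^{*}}\log(n)/n\asymp \minimaxrate^2$ and $2^{-J^{*}\beta}\asymp \minimaxrate$, and set $u_n\coloneqq c\sqrt{\log n/n}$ for a small universal $c>0$. Define
\[
A\coloneqq \Set*{\vect\cmother \in \Theta \given
\begin{array}{l}
|\cmother_{j,k}-\cmother^{0}_{j,k}|\leq u_n,\ 0\leq j\leq J^{*},\\
\cmother_{j,k}=0,\ J^{*}<j\leq J_n
\end{array}
}.
\]

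The first step is to show $A$ lies inside the KL--VKL ball of radius $\minimaxrate$. Let $g_{\vect\cmother}\coloneqq\sum_{(j,k)\in\iset}(\cmother_{j,k}-\cmother^{0}_{j,k})\mother_{j,k}$, so that the identity $\dens_{\vect\cmother}=\dens_0 e^{g_{\vect\cmother}}/\pmes_0[e^{g_{\vect\cmother}}]$ yields $\logdens_{\vect\cmother}-\logdens_0=g_{\vect\cmother}-\log\pmes_0[e^{g_{\vect\cmother}}]$ and in particular $\|\logdens_{\vect\cmother}-\logdens_0\|_\infty\leq 2\|g_{\vect\cmother}\|_\infty$. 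Using the localization bound $\|\sum_k|\mother_{j,k}|\|_\infty\lesssim 2^{j/2}$ for the CDV basis together with $|\cmother^{0}_{j,k}|\lesssim 2^{-j(\beta+1/2)}$ (coming from $\logdens_0\in\zygmund\domain$), on $A$ I obtain
\[
\|g_{\vect\cmother}\|_\infty \lesssim u_n\, 2^{J^{*}/2}+\sum_{j>J^{*}}2^{-j\beta}\lesssim \minimaxrate,
\]
while orthogonality gives $\|g_{\vect\cmother}\|_2^2=\sum_{(j,k)}(\cmother_{j,k}-\cmother^{0}_{j,k})^2\lesssim 2^{J^{*}}u_n^2+2^{-2J^{*}\beta}\lesssim \minimaxrate^2$. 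Because $\|\logdens_{\vect\cmother}-\logdens_0\|_\infty=o(1)$ on $A$, both densities are uniformly bounded away from $0$ and $\infty$, so a second-order Taylor expansion combined with the constraint $\int\dens_{\vect\cmother}=\int\dens_0=1$ gives $\KL(\pmes_0,\pmes_{\vect\cmother})\vee\VKL(\pmes_0,\pmes_{\vect\cmother})\lesssim \|\logdens_{\vect\cmother}-\logdens_0\|_2^2\lesssim \minimaxrate^2$, and the hidden constant is absorbed by shrinking $c$.

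The second step is the prior mass bound. By independence, $\log\Pi(A)$ splits into contributions from $j\leq J^{*}$ and from $J^{*}<j\leq J_n$. For $j\leq J^{*}$, the hypothesis $\beta_0\leq\beta$ makes $2^{j(\beta_0+1/2)}(|\cmother^{0}_{j,k}|+u_n)$ uniformly bounded, so \cref{eq:sm:40} together with the lower bound $\omega_j\geq a_1 2^{-j(1+b_1)}$ in \cref{eq:sm:38} yields $\log\Pi(|\cmother_{j,k}-\cmother^{0}_{j,k}|\leq u_n)\geq -C(j+\log(1/u_n))$; summing over the $O(2^{J^{*}})$ coordinates produces $-O(2^{J^{*}}\log n+J^{*}2^{J^{*}})=-O(n\minimaxrate^2)$, since both $2^{J^{*}}\log n$ and $J^{*}2^{J^{*}}$ are of order $n\minimaxrate^2$ by the choice of $J^{*}$. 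For $J^{*}<j\leq J_n$, the upper bound $\omega_j\lesssim 2^{-j(1+\mustar)}$ from \cref{eq:sm:38} gives $\log(1-\omega_j)\geq -2\omega_j$ and thus $\sum_{J^{*}<j\leq J_n}2^j\log(1-\omega_j)\geq -C 2^{-J^{*}\mustar}=o(1)$ contributes negligibly.

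The main obstacle is the bookkeeping around the critical level: $u_n$ must be small enough that $2^{J^{*}}u_n^2\asymp \minimaxrate^2$ (for the $L^2$ control of $g_{\vect\cmother}$), yet large enough that the scaled argument $2^{j(\beta_0+1/2)}(|\cmother^{0}_{j,k}|+u_n)$ remains bounded at $j=J^{*}$ (so that the lower bound on $f$ from \cref{eq:sm:40} applies). Both constraints are met simultaneously precisely because $\beta_0\leq\beta$ forces $2^{J^{*}(2\beta_0+1)}\log(n)/n=2^{-2J^{*}(\beta-\beta_0)}\leq 1$; everything else reduces to routine estimation.
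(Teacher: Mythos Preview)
Your proposal is correct and follows essentially the same route as the paper: pick the optimal truncation level $J^*$, take the set of $\vect\cmother$ that agree with $\vect\cmother^0$ to within a small tolerance below $J^*$ and vanish above it, show this set sits inside the KL--VKL ball via the $\|\cdot\|_{1,\infty}$/$\|\cdot\|_2$ control of $g_{\vect\cmother}$, and then lower-bound its prior mass using independence, the lower bound on $\omega_j$, and \cref{eq:sm:40}. The paper uses tolerance $n^{-1/2}$ and cites \citet[Lemma~3.1]{van2008rates} for the KL--VKL step, whereas you use $c\sqrt{\log n/n}$ and do the Taylor expansion by hand; you are also more explicit than the paper about the high-frequency factor $\prod_{j>J^*}(1-\omega_j)$. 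One small caveat: your remark that ``the hidden constant is absorbed by shrinking $c$'' only affects the $j\le J^*$ contribution to $\|g_{\vect\cmother}\|_2^2$, not the bias term $\sum_{j>J^*}|\cmother^0_{j,k}|^2\asymp 2^{-2J^*\beta}$, so you really obtain $\KL\vee\VKL\le B\minimaxrate^2$ for some fixed $B$---but this is exactly what the paper proves too, and it is all that is needed downstream.
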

\begin{proof}
  Let $\bm\cbasis$ be such that
  $|\cbasis_{j,k} - \cbasis_{j,k}^{\L}| \leq n^{-1/2}$ if $j \leq \tilde{J}$ and
  $\cbasis_{j,k} = 0$ for $g > \tilde{J}$, with $\tilde{J}$ a truncation level
  to be chosen accordingly. Because
  $|\cbasis_{j,k}^{\L}| \leq R2^{-j(\smooth+1/2)}$ by assumption, this implies that
  $\|\bm\cbasis - \bm\cbasis^{\L}\|_{1,\infty} \lesssim \sum_{j \leq
    \tilde{J}} 2^{j/2}n^{-1/2} + \sum_{j > \tilde{J}}2^{-j\smooth} \lesssim
  2^{\tilde{J}/2}n^{-1/2} + 2^{-\tilde{J}\smooth} \lesssim \minimaxrate$ by
  choosing the optimal truncation level. By \citet[Lemma~3.1]{van2008rates},
  this implies that
  $\KL(\pmes_{\L},\pmes_{\bm\cbasis}) \lesssim \minimaxrate^2$ and
  $\VKL(\pmes_{\L},\pmes_{\bm\cbasis}) \lesssim \minimaxrate^2$. Thus, for
  some $B > 0$,
  \begin{multline}
    \Pi\big(\KL(\pmes_{\L},\pmes_{\bm\cbasis}) \leq B\minimaxrate^2,\,
    \VKL(\pmes_{\L},\bm\cbasis) \leq B\minimaxrate^2\big)\\%
    \begin{aligned}
      &\geq \prod_{j \leq \tilde{J}}\prod_k \omega_j
      \Pi_{j,k}\big(|\cbasis_{j,k} - \cbasis_{j,k}^{\L}| \leq n^{-1/2}
      \big)\prod_{j = \tilde{J}}^{J_n}\prod_k(1 - \omega_j)\\
      &\geq \prod_{j \leq \tilde{J}}\prod_k \omega_j%
      2^{j(\smooth_0 + 1/2)}\int_{\cbasis_{j,k}^{\L} -
        n^{-1/2}}^{\cbasis_{j,k}^{\L}+n^{-1/2}}f(2^{j(\smooth_0 + 1/2)}t)\,\intd t%
      \prod_{j = \tilde{J}}^{J_n}\prod_k(1 - \omega_j)\\
      &= \prod_{j \leq \tilde{J}}\prod_k \omega_j%
      \int_{2^{j(\smooth_0+1/2)}(\cbasis_{j,k}^{\L} -
        n^{-1/2})}^{2^{j(\smooth_0+1/2)}(\cbasis_{j,k}^{\L}+n^{-1/2})}f(t)\,\intd t%
      \prod_{j = \tilde{J}}^{J_n}\prod_k(1 - \omega_j).
    \end{aligned}
  \end{multline}
  Since $\smooth \geq \smooth_0$ and $ j \leq \tilde{J}$, there is a constant $g >
  0$ such that $f \geq g$ always on the domain of integration of the previous
  display. Moreover, $\omega_j \gtrsim e^{-j b_1}$ and there are no more
  than a generic constant times $2^j$ wavelets at each level $j$, then for some
  constants $C > 0$,
  \begin{equation}
    \Pi\big(\KL(\pmes_{\L},\pmes_{\bm\cbasis}) \leq B\minimaxrate^2,\,
    \VKL(\pmes_{\L},\pmes_{\bm\cbasis}) \leq B\minimaxrate^2\big)%
    \geq \exp\{- C 2^{\tilde{J}}\log(n) \}.
  \end{equation}
  The conclusion follows since $2^{\tilde{J}}\log(n) \lesssim n \minimaxrate^2$.
\end{proof}

\begin{proposition}
  \label{pro:sm:3}
  Let $\Pi$ be the spike-and-slab prior described in
  \cref{main-sec:spike-slab-log}. Let $n\refrate^2 \rightarrow \infty$ with
  $\log(n\refrate^2) \gtrsim \log(n)$ and let $n$ be large enough. Then for
  every $C > 0$ there exists a sequence of sets $(\Theta_n)_{n\geq 0}$ such that
  $\Pi(\Theta_n) \lesssim \exp\{-Cn\refrate^2)$ and
  $\log N(\refrate,\mathcal{P}_n,\He) \leq n\refrate^2$, where
  $\mathcal{P}_n = \Set{p_{\bm\cbasis} \given \bm\cbasis \in \Theta_n}$.
\end{proposition}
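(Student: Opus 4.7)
The plan is to construct a sieve $\Theta_n\subset\Theta$ that simultaneously controls the frequency support, the total sparsity, and the magnitudes of the wavelet coefficients, and then verify the prior mass and entropy conditions separately. For parameters $J^\star\asymp\log_2(n\refrate^2)$, $S\asymp n\refrate^2/\log(n)$ and $R$ of order $n\refrate^2$, to be tuned at the end, I would set
\begin{equation*}
  \Theta_n \coloneqq \Set*{\vect\cmother\in\Theta \given
    \begin{array}{l}
      \cmother_{j,k}=0\ \text{for all}\ j>J^\star,\\
      \#\Set{(j,k) : \cmother_{j,k}\neq 0}\leq S,\\
      \max_{(j,k)}|\cmother_{j,k}|\leq R
    \end{array}}.
\end{equation*}

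For the prior mass of $\Theta_n^c$, I would decompose into three events. By \cref{eq:sm:38}, the expected number of non-zero coefficients at level $j$ is at most a constant times $2^{-\mustar j}$, so the probability that any coefficient at a level $j>J^\star$ is non-zero is bounded by $\sum_{j>J^\star}2^{-\mustar j}\lesssim 2^{-\mustar J^\star}$; under the hypothesis $\log(n\refrate^2)\gtrsim \log(n)$, this is negligible compared to $e^{-Cn\refrate^2}$. Since the expected total number of non-zero coefficients is $O(1)$, a Poisson-type tail bound (Chernoff applied to the upper-envelope binomial) yields $\Pr(\#\text{non-zero}>S)\lesssim (c/S)^S$, whose logarithm is $\lesssim -S\log(S)\asymp -n\refrate^2$ thanks to the choice of $S$. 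Finally, a union bound using \cref{eq:sm:39} and the scaling $q_j(x)=2^{j(\beta_0+1/2)}f(2^{j(\beta_0+1/2)}x)$ gives $\Pr(\exists (j,k): |\cmother_{j,k}|>R)\lesssim 2^{J^\star+1}e^{-b_2 R}$, which stays below $e^{-Cn\refrate^2}$ for $R$ of order $n\refrate^2$.

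For the entropy, the key observation is that on $\Theta_n$ one has $\|\vect\cmother+\vect{c}(\vect\cmother)\|_{1,\infty}\lesssim R\sqrt{S}\, 2^{J^\star/2}$, so the log-densities $\logdens_{\vect\cmother}$ are uniformly bounded in sup-norm on $\Theta_n$. Consequently the densities $\dens_{\vect\cmother}$ are uniformly bounded from above and away from zero, making the Hellinger distance equivalent to the $L^2$ distance on densities, which in turn is controlled by $\|\logdens_{\vect\cmother}-\logdens_{\vect\cmother'}\|_2=\|\vect\cmother-\vect\cmother'+\vect{c}(\vect\cmother)-\vect{c}(\vect\cmother')\|_2\lesssim \|\vect\cmother-\vect\cmother'\|_2$ by the orthogonality of the CDV basis and the Lipschitz continuity of $\vect{c}$. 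Enumerating the $\binom{D}{S}$ possible sparsity patterns (with $D\lesssim 2^{J^\star}$) and covering each associated Euclidean ball of radius $R\sqrt{S}$ in dimension $S$, I obtain
\begin{equation*}
  \log N(\refrate,\mathcal{P}_n,\He)\lesssim S\log(eD/S)+S\log(R\sqrt{S}/\refrate)\lesssim S\log(n)\lesssim n\refrate^2.
\end{equation*}

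The main obstacle is the joint tuning of $S$, $R$ and $J^\star$: the near-saturation of the prior tail for the total sparsity (the Poisson-like decay $(c/S)^S$ rather than a mere $e^{-cS}$) is what allows $S$ to be taken as small as $n\refrate^2/\log(n)$, which in turn keeps the entropy at order $n\refrate^2$ despite the logarithmic factor arising from $\log(R\sqrt{S}/\refrate)$. The hypothesis $\log(n\refrate^2)\gtrsim\log(n)$ is used precisely to ensure that $J^\star$ stays of order $\log(n)$, so that the number of positions at levels $\leq J^\star$ is polynomial in $n$ and the enumeration of support patterns contributes only $O(S\log n)$ to the entropy.
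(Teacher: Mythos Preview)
Your sieve includes an extra truncation condition, $\cmother_{j,k}=0$ for $j>J^\star$ with $J^\star\asymp\log_2(n\refrate^2)$, and this is where the argument breaks. The prior mass of the event that some coefficient at a level $j>J^\star$ is non-zero is, by your own computation, of order $2^{-\mustar J^\star}\asymp (n\refrate^2)^{-\mustar}$. This decays only polynomially in $n\refrate^2$, not like $e^{-Cn\refrate^2}$, so the claim that it is ``negligible compared to $e^{-Cn\refrate^2}$'' is false. The hypothesis $\log(n\refrate^2)\gtrsim\log(n)$ does not help here: it merely says that $n\refrate^2$ is at least a power of $n$, but a negative power of $n\refrate^2$ still dominates $e^{-Cn\refrate^2}$ as soon as $n\refrate^2\to\infty$. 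The paper's sieve avoids this problem by not imposing any frequency truncation beyond the built-in $J_n$ (with $2^{J_n}\asymp n$); the enumeration of support patterns then uses $|\jset_n|\lesssim n$, and $\log\binom{n}{S}\lesssim S\log n\lesssim n\refrate^2$ still holds for $S\asymp n\refrate^2/\log(n\refrate^2)$.

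Your entropy step also has a gap. You assert that on $\Theta_n$ the log-densities are uniformly bounded in sup-norm, but with $R$ of order $n\refrate^2$ and $J^\star\asymp\log_2(n\refrate^2)$ one gets $\|\vect\cmother\|_{1,\infty}\lesssim R\,2^{J^\star/2}\asymp(n\refrate^2)^{3/2}$, which diverges; the Hellinger--$L^2$ equivalence you invoke therefore fails. The paper circumvents this by bounding the \emph{rescaled} coefficients $Z_{j,k}\coloneqq 2^{j(\beta_0+1/2)}\cmother_{j,k}$ (so that $\|\vect\cmother-\vect\cmother'\|_{1,\infty}\lesssim\|\vect Z-\vect Z'\|_\infty$ thanks to the summable factor $2^{-j\beta_0}$) and by using the inequality $\He(\pmes_{\vect\cmother},\pmes_{\vect\cmother'})\lesssim\|\vect\cmother-\vect\cmother'\|_{1,\infty}\exp\{-c\|\vect\cmother-\vect\cmother'\|_{1,\infty}\}$ from \citet[Lemma~3.1]{van2008rates}, which requires no uniform density bound. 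With the sieve condition $\|\vect Z\|_\infty\leq\log(Kn\refrate^2)$, the tail assumption \eqref{eq:sm:39} gives directly $\Pr(|Z_{j,k}|>\log(Kn\refrate^2))\leq e^{-b_2Kn\refrate^2}$, and the entropy reduces to covering an $\ell^\infty$-ball of radius $\log(Kn\refrate^2)$ in $M_n$ coordinates.
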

\begin{proof}
  We choose, for some constant $K,K' > 0$,
  \begin{equation}
    \Theta_n \coloneqq \Set*{\bm\cbasis \in \Theta \given%
      \begin{array}{c}
        \sup_{j,k}2^{j(\smooth_0+1/2)}|\cbasis_{j,k}| \leq \log(Kn\refrate^2),\\
        |\Set{\cbasis_{j,k} \ne 0}| \leq K'n
        \refrate^2 / \log(n\refrate^2)
      \end{array}
      }.
  \end{equation}
  Then, by construction of the prior since
  $\cbasis_{j,k} = 2^{-j(\smooth_0 +1/2)} Z_{j,k}$ for $Z_{j,k} \sim F$, and
  since all the coefficients are independent,
  \begin{align}
    \Pi(\Theta_n^c)%
    &\leq \sum_{j,k} \omega_{j}\Pi\big( |Z_{j,k}| > \log(Kn\refrate^2)\big)%
      + \Pi\Big( |\Set{\cbasis_{j,k} \ne 0}| >
      \frac{K'n\refrate^2}{\log(n\refrate^2)}\Big)\\
    &\leq e^{-b_2Kn\refrate^2} + \Pi\Big( |\Set{\cbasis_{j,k} \ne 0}| >
      \frac{K'n\refrate^2}{\log(n\refrate^2)}\Big),
  \end{align}
  where the second line follows because $\omega_j \leq 2^{-j(1+\mustar)}$ for
  $\mustar > 0$ and because there are at most a generic constant times $2^j$
  wavelets at each level $j \in \NNInts$, and because of the assumption of
  \cref{main-eq:39}. Furthermore,
  $\EE_{\Pi}[|\Set{\cbasis_{j,k} \ne 0}|] \leq \sum_{j=0}^{J_n}\sum_k
  2^{-j(1+\mustar)} \lesssim 1$, Hence, by Chernoff's bound, for some constant
  $B > 0$ when $K'n\refrate^2$ gets large enough
  \begin{equation}
    \Pi\Big( |\Set{\cbasis_{j,k} \ne 0}| > \frac{K'n
    \refrate^2}{\log(n\refrate^2)} \Big)%
    \leq e^{-BK' n\refrate^2}.
  \end{equation}
  Thus $\Theta_n$ meets the first requirement of the proposition.

  \smallskip%
  \par We now determine an upper bound on $N(\refrate, \mathcal{P}_n, \He)$. We
  assume without loss of generality that
  $M_n \coloneqq K' n \refrate^2/ \log(n\refrate^2)$ is integer. Furthermore
  there is a generic constant $c > 0$ such that, for any
  $\bm\cbasis,\bm\cbasis' \in \Theta_n$
  \citep[see][Lemma~3.1]{van2008rates},
  \begin{align}
    \He(P_{\bm\cbasis},P_{\bm\cbasis'})%
    &\lesssim \|\bm\cbasis - \bm\cbasis'\|_{1,\infty}e^{-c \|\bm\cbasis
    - \bm\cbasis'\|_{1,\infty}}.
  \end{align}
  But, for any $\bm\cbasis,\bm\cbasis' \in \Theta_n$, writing
  $Z_{j,k} = 2^{j(\smooth_0+1/2)}\cbasis_{j,k}$ and
  $Z_{j,k}' = 2^{j(\smooth_0+1/2)}\cbasis_{j,k}'$, it is clear that
  $\|\bm\cbasis - \bm\cbasis'\|_{1,\infty} \lesssim \|\bm{Z} -
  \bm{Z}'\|_{\infty}$. Since $\bm\cbasis$ has no more than $M_n$ non-zero
  entries, $N(\refrate, \mathcal{P}_n, \He)$ is no more than  the sum over all
  possible subsets of  indices
  $I \subseteq A_{n} \coloneqq \Set{(j,k)\given j \leq \frac{\log(n)}{\log(2)}}$ such that
  $|I| \leq M_{n}$ of the covering numbers of
  \begin{equation}
    \label{eq:28}
    E_{I} \coloneqq%
    \Set*{\bm{z} \in \Reals^I \given \|\bm{z}\|_{\infty} \leq \log(Kn\refrate^{2})}
  \end{equation}
  with balls of radius $B\refrate$, for a universal $B > 0$. A $B\varepsilon_{n}$-net over
  $E_{I}$ has cardinality no more than
  \begin{equation}
    \label{eq:29}
    \Big(\frac{\log(Kn\refrate^{2})}{B\refrate}  \Big)^{|I|}
    \leq \Big(\frac{\log(Kn\refrate^{2})}{B\refrate}  \Big)^{M_{n}} \lesssim e^{C_{1}K'n\refrate^{2}},
  \end{equation}
  for some universal $C_{1} > 0$. The number of possible subsets is
  $\sum_{m=0}^{M_{n}}\binom{|A_{n}|}{m}$. Using the well-known inequality
  $\binom{n}{k} \leq \frac{n^{k}}{k!}$, we deduce that,
  \begin{equation}
    \label{eq:30}
    \sum_{m=0}^{M_{n}}\binom{|A_{n}|}{m} \leq \sum_{m=0}^{M_{n}}\frac{|A_{n}|^{m}}{m!}%
    = \frac{e^{|A_{n}|}}{M_{n}!}\int_{|A_{n}|}^{\infty}t^{M_{n}}e^{-t}\intd t.
  \end{equation}
  By Stirling's formula, as $n \to \infty$, because $|A_{n}| \gg M_{n}$,
  \begin{align}
    \sum_{m=0}^{M_{n}}\binom{|A_{n}|}{m}%
    &\leq \frac{(1+o(1))e^{|A_{n}| + M_{n} - M_{n}\log M_{n}}}{\sqrt{2\pi M_{n}}}%
      \int_{|A_{n}|}^{\infty}t^{M_{n}}e^{-t} \intd t\\
    &= \frac{(1+o(1))|A_{n}|e^{|A_{n}| + M_{n} - M_{n}\log M_{n} + M_{n}\log|A_{n}|}}{\sqrt{2\pi M_{n}}}%
      \int_{1}^{\infty}e^{-u|A_{n}| + M_{n}\log(u)} \intd u\\
    &\leq \frac{(1+o(1))|A_{n}|e^{|A_{n}| + M_{n} - M_{n}\log M_{n} + M_{n}\log|A_{n}|}}{\sqrt{2\pi M_{n}}}%
      \int_{1}^{\infty}e^{-u|A_{n}|(1 - \frac{M_{n}}{e|A_{n}|})}\intd u\\
    &= \frac{(1+o(1))e^{M_{n}(1+e^{-1}) - M_{n}\log M_{n} + M_{n}\log|A_{n}|}}{\sqrt{2\pi M_{n}}(1 - \frac{M_{n}}{e|A_{n}|})}.
  \end{align}
  Since $M_{n} \asymp \frac{K'n\refrate^{2}}{\log(n\refrate^{2})}$ and $|A_{n}| \lesssim n$,
  it follows for some universal constant $C_{2} > 0$ that
  $\sum_{m=0}^{M_{n}}\binom{|A_{n}|}{m} \lesssim e^{C_{2}K'n\refrate^{2}}$. The
  conclusion of the proposition follows by picking $K'$ small enough.
\end{proof}

\section{Posterior concentration on $L^2$ balls and uniform consistency}
\label{sec:sm:uniform-consistency}

\par Here we strengthen a little bit the result of \cref{sec:sm:hell-contr} and we
show that the posterior indeed concentrates on $\|\cdot\|_2$ and $\|\cdot\|_{1,\infty}$
neighborhoods of $\bm\cbasis^{\L}$. Indeed, since the coefficients
$\bm\cbasis^{\L}$ is only identifiable up to suitable translation, we don't
expect to concentrates on balls of the form
$\Set{\bm\cbasis \given \|\bm\cbasis - \bm\cbasis^{\L}\| \leq R}$, but instead on
balls of the form
$\Set{\bm\cbasis \given \|\bm\cbasis + \bm\Xi^{\bm\cbasis} - \bm\cbasis^{\L}\| \leq R}$,
where $\bm\Xi^{\bm\cbasis}$ is the vector with entries
$\Xi^{\bm\cbasis}_{j,k} \coloneqq \Inner{\xi^{\bm\cbasis},\basis_{j,k}}$, where
$\xi^{\bm\cbasis}$ is the log-normalizing constant defined in
\cref{main-sec:prior-definition}. We remark that by the properties of the
wavelet basis, there is a $J_0$ such that $\Xi_{j,k} = 0$ for all $j \geq J_0$ and
all $k$. We also prove that concentration on $\|\cdot\|_{1,\infty}$ balls imply the
posterior concentration on $\consistencyset$ as required in the main paper.%

\smallskip%
\par The proof follows a minor adaptation of
\citet[Lemma~4]{castillo2014bayesian}, itself inspired from ideas of
\citet{rivoirard2012}. The argument, however, requires to be adapted to handle
the fact that there is a non zero prior mass of having coefficients
$\cbasis_{j,k} \ne 0$ with $j > \tilde{J}$, with $\tilde{J}$ being the optimal
truncation level.

\begin{proposition}
  \label{pro:sm:1}
  Let $\Pi$ be the spike-and-slab prior described in
  \cref{main-sec:spike-slab-log}. Then, there is a generic constant $C > 0$ such
  that,
  \begin{equation}
    \sup_{J \geq 0}\EE_{\L}\Pi\Big(
    \textstyle\sum_{j > J}\max_k|\cbasis_{j,k}|2^{j/2}
    > C2^{-J\smooth_0}\log(n) \mid \obs\Big)= o(1).
  \end{equation}
\end{proposition}
\begin{proof}
  By construction, $\cbasis_{j,k} = 2^{-j(\smooth_0 + 1/2)}Z_{j,k}$, where
  $Z_{j,k} \iid F$. Then, we see that
  $\sum_{j > J}\max_k|\cbasis_{j,k}|2^{j/2} = \sum_{j>
    J}2^{-j\smooth_0}\max_k|Z_{j,k}| \lesssim 2^{-J
    \smooth_0}\sup_{\klm}|Z_{\klm}|$. Hence, the probability in the statement of
  the proposition is bounded by
  $\EE_{\L}\Pi(\sup_{\klm}|Z_{\klm}| \gtrsim \log(n) \mid \obs)$. But, by the
  assumptions on $F$, we have that for any $K > 0$%
  \begin{align}
    \Pi( \textstyle\sup_{\klm}|Z_{\klm}| > \log(Kn\minimaxrate^2) )%
    &\leq \sum_{j\geq 0}\sum_k\Pi( |Z_{j,k}| > \log(Kn\minimaxrate^2) )\\
    &= \sum_{j\geq 0}\sum_k \omega_j\Pi( |Z_{j,k}| > \log(Kn\minimaxrate^2) \mid
      \cbasis_{j,k} \ne 0)\\
    &\lesssim \exp\{-  b_2 Kn\minimaxrate^2\},
  \end{align}%
  where the last line follows because $\omega_j \leq 2^{-j(1+\mustar)}$ for
  $\mustar > 0$ by assumption, and there are at most a generic constant times
  $2^j$ wavelets at each level $j$. By \citet[see the proof of
  Theorem~2.1]{ghosal-ghosh-vaart-2000-conver} the last display together with
  \cref{pro:sm:4} implies that the proposition is true, because
  $\log(n\minimaxrate^2) \asymp \log(n)$ by definition of $\minimaxrate$.
\end{proof}

We are now in position to adapt \citet[Lemma~4]{castillo2014bayesian} to our
setting. The idea is to leverage the property that the high frequency
coefficients are always small enough (\cref{pro:sm:1}) to obtain that the Hellinger
contraction implies the desired result.
\begin{proposition}
  \label{pro:sm:2}
  Let $\Pi$ be the spike-and-slab prior described in
  \cref{main-sec:spike-slab-log} and assume $0 < \smooth_0 \leq \smooth \leq
  S$. Then, under the assumption of the paper, for all $R > 0$ the following
  holds,
  \begin{gather}
    \exists K > 0,\quad%
    \sup_{\L \in \dclass(R,\smooth)}\EE_{\L}\Pi(\|\bm\cbasis +
    \bm\Xi^{\bm\cbasis} - \bm\cbasis^{\L}\|_2
    > K\minimaxrate \mid \obs) =o(1),\\
    \forall \eta > 0,\quad%
    \sup_{\L \in \dclass(R,\smooth)}\EE_{\L}\Pi(\|\bm\cbasis +
    \bm\Xi^{\bm\cbasis} - \bm\cbasis^{\L}\|_{1,\infty} > \eta \mid \obs) =
    o(1).
  \end{gather}
\end{proposition}
\begin{proof}
  From \citet[Lemma~8]{ghosal2007b}, because $\dens_{\L}$ is bounded from below (as
  $\|\L\|_{\infty} < \infty$ by assumption), we have that
  \begin{align}
    \notag
    \|\L_{\bm\cbasis} - \L\|_2^2%
    &= \int_{\domain}\log^2(\dens_{\bm\cbasis}/\dens_{\L})\\
    \notag
    &\lesssim \int_{\domain}\dens_{\L}\log^2(\dens_{\bm\cbasis}/\dens_{\L})\\
    \label{eq:sm:10}
    &\lesssim \He(\pmes_{\bm\cbasis}, \pmes_{\L})^2\big(1 +
    \|\L_{\bm\cbasis} - \L\|_{\infty}^2 \big).
  \end{align}
  But, for any $J \in \Nats$ large enough so that $\Xi_{j,k} = 0$ for all $j > J$
  and all $k$, we have
  \begin{align}
    \label{eq:2}
    \|\L_{\bm\cbasis} - \L\|_{\infty}%
    &\lesssim%
      \sum_{j=0}^J \max_k|\cbasis_{j,k} +
      \Xi^{\bm\cbasis}_{\klm} - \cbasis^{\L}_{j,k}|2^{j/2}\\
    &\quad%
      +\sum_{j > J}\max_k|\cbasis_{j,k}|2^{j/2}%
      + \sum_{j > J}\max_k|\cbasis_{j,k}^{\L}|2^{j/2}
  \end{align}
  By assumption
  $\sum_{j > J}\max_k|\cbasis_{j,k}^{\L}|2^{j/2} \lesssim 2^{-J\smooth}$, and by
  \cref{pro:sm:1} there is a set $\Theta_n$ of posterior mass $1 + o_p(1)$ such
  that for any $\bm\cbasis \in \Theta_n$ we have
  $\sum_{j > J}\max_k|\cbasis_{j,k}|2^{j/2} \lesssim
  2^{-J\smooth_0}\log(n)$. Hence, whenever $\smooth \geq \smooth_0$, for
  $\bm\cbasis \in \Theta_n$,
  \begin{align}
    \|\L_{\bm\cbasis} - \L\|_{\infty}%
    &\lesssim%
      \sum_{j=0}^J \max_k|\cbasis_{j,k} +
      \Xi^{\bm\cbasis}_{\klm} - \cbasis^{\L}_{j,k}|2^{j/2}%
      + 2^{-J\smooth_0}\log(n) + 2^{-J\smooth}\\
    &\lesssim 2^{J/2}\|\bm\cbasis + \bm\Xi^{\bm\cbasis} -
      \bm\cbasis^{\L}\|_2%
      + 2^{-J\smooth_0}\log(n)\\
    &=2^{J/2}\|\L_{\bm\cbasis} - \L\|_2%
      + 2^{-J\smooth_0}\log(n),
  \end{align}
  where the last line follows by the orthogonality of the wavelet
  basis. Combining the last display with \cref{eq:sm:10} we find that
  \begin{equation}
    \|\L_{\bm\cbasis} - \L\|_2^2\big(1 -
    2^J\He(\pmes_{\bm\cbasis},\pmes_{\L})^2 \big)%
    \lesssim \He(\pmes_{\bm\cbasis},\pmes_{\L})^2\big(1 +
    2^{-2J\smooth_0}\log^2(n) \big).
  \end{equation}
  By \cref{eq:sm:1}, we can furthermore restrict ourselves to the event such
  that
  $\Set{\bm\cbasis \given \He(\pmes_{\bm\cbasis},\pmes_{\L}) \lesssim
    \minimaxrate}$. Then, it is always possible to choose $J$ sufficiently large
  so that $2^{-J\smooth_0}\log(n) = o(1)$, but small enough so that
  $2^J\minimaxrate^2 = o(1)$. Thus
  $\|\L_{\bm\cbasis} - \L\|_2^2 \lesssim \minimaxrate^2$ on
  $\Set{\bm\cbasis \given \He(\pmes_{\bm\cbasis},\pmes_{\L}) \lesssim
    \minimaxrate}$, and by orthogonality of the wavelet basis
  $\|\bm\cbasis + \bm\Xi^{\bm\cbasis} - \bm\cbasis^{\L}\|_2 =
  \|\L_{\bm\cbasis} - \L\|_2 \lesssim \minimaxrate$. Moreover,
  we have proven along the way that on the same event
  $\|\bm\cbasis + \bm\Xi^{\bm\cbasis} - \bm\cbasis^{\L}\|_{1,\infty} \lesssim
  2^{J/2}\|\L_{\bm\cbasis} - \L\|_2 + 2^{-J\smooth_0}\log(n) =
  o(1)$.%
\end{proof}

\section{Missing proofs of the main document}
\label{sec:missing-proofs-main}

\subsection{Concentration of the posterior on $\consistencyset$}
\label{sec:sm:proof-lem:consistency}

\begin{proof}[Proof of \cref{main-lem:consistencyset}]
We prove the lemma by showing that
$\|F^{\bm\cbasis}_0\|_{\infty} + \|\sum_{m\geq 1}F^{\bm\cbasis}_m\|_{\infty} \lesssim \|\bm\cbasis + \bm\Xi^{\bm\cbasis} - \bm\cbasis^{\L}\|_{1,\infty}$.
Then, the conclusion of the lemma will follow from \cref{pro:sm:2}. We note that
it is enough to show the inequality for
$ \|\bm\cbasis + \bm\Xi^{\bm\cbasis} - \bm\cbasis^{\L}\|_{1,\infty} \ll 1$ because of
\cref{pro:sm:2}. For $m \geq 1$, we have
$F_m^{\bm\cbasis} = (\cbasis_{\psi(m)} - \cbasis^{\L}_{\psi(m)})(\basis_{\psi(m)} - \EE_{\L}[\basis_{\psi(m)}]) = (\cbasis_{\psi(m)} + \Xi^{\bm\cbasis}_{\psi(m)} - \cbasis_{\psi(m)}^{\L})(\basis_{\psi(m)} - \EE_{\L}[\basis_{\psi(m)}])$,
by choosing $J_0$ sufficiently large so that $\Xi^{\bm\cbasis}_{\psi(m)} = 0$
whenever $m \ne 0$. It follows by
\cref{main-pro:coeff:new}-\eqref{main-item:norm:2},%
\begin{align}
  \label{eq:4}
  \Big\|\sum_{m \geq 1}F_m^{\bm\cbasis}\Big\|_{\infty}%
  &\leq \sup_x\sum_{m\geq 1}|\cbasis_{\psi(m)}^{\bm\cbasis} +
    \Xi^{\bm\cbasis}_{\psi(m)} - \cbasis^{\L}_{\psi(m)}||\basis_{\psi(m)}(x) -
    \EE_{\L}[\basis_{\psi(m)}]|\\
  &\leq \sum_{j > J_0} \max_k|\cbasis_{j,k}^{\bm\cbasis} +
    \Xi^{\bm\cbasis}_{j,k} - \cbasis^{\L}_{j,k}|
    \sup_x\sum_k|\basis_{j,k}(x)- \EE_{\L}[\basis_{j,k}]|\\
  &\lesssim \sum_{j > J_0} \max_k|\cbasis_{j,k}^{\bm\cbasis} +
    \Xi^{\bm\cbasis}_{j,k} - \cbasis^{\L}_{j,k}|2^{j/2}\\
  &\leq \|\bm\cbasis + \bm\Xi^{\bm\cbasis} - \bm\cbasis^{\L}\|_{1,\infty}.
\end{align}
So it remains to show that $\|F^{\bm\cbasis}_0\|_{\infty}$ is also bounded by the
same quantity. Remark that $\EE_{\L}[\exp\{ \L_{\bm\cbasis} - \L\}] = 1$, thus
\begin{align}
  \label{eq:1}
  1%
  &= \EE_{\L}[e^{\L_{\bm\cbasis} -\L }]
  = 1 + \EE_{\L}[\L_{\bm\cbasis} - \L]%
  + \frac{1}{2}\EE_{\L}[(\L_{\bm\cbasis} - \L)^2]\cdot O\big(e^{\|\L_{\bm\cbasis} - \L\|_{\infty}}\big).
\end{align}
But
$\EE_{\L}[\L_{\bm\cbasis} - \L] = \xi^{\bm\cbasis} + \sum_{(j,k) \in \kset}(\cbasis_{j,k} - \cbasis^{\L}_{j,k})\EE_{\L}[\basis_{j,k}]$
and
$\|\L_{\bm\cbasis} - \L\|_{\infty} \lesssim \|\bm\cbasis + \bm\Xi^{\bm\cbasis} -\bm\cbasis^{\L}\|_{1,\infty} \ll 1$
by assumption. Therefore, we have shown
\begin{align}
  \label{eq:8}
  \xi^{\bm\cbasis}%
  &= - \sum_{(j,k) \in \kset}(\cbasis_{j,k} -
    \cbasis^{\L}_{j,k})\EE_{\L}[\basis_{j,k}]%
    + O\big(\|\L_{\bm\cbasis} - \L\|_2^2\big).
\end{align}
Since for $J_0$ taken large enough we have
$\xi^{\bm\cbasis} = \sum_{(j,k)\in B_0}\Xi_{j,k}\basis_{j,k}$, we deduce that
\begin{align}
  \label{eq:11}
  F_0%
  &= \sum_{(j,k)\in B_0}(\cbasis_{j,k} -
    \cbasis^{\L}_{j,k})(\basis_{j,k} - \EE_{\L}[\basis_{j,k}])\\
  &= \xi^{\bm\cbasis} + \sum_{(j,k) \in B_0}(\cbasis_{j,k} -
    \cbasis^{\L}_{j,k})\basis_{j,k}%
    + \sum_{(j,k) \notin B_0}(\cbasis_{j,k} -
    \cbasis^{\L}_{j,k})\EE_{\L}[\basis_{j,k}] +O\big(\|\L_{\bm\cbasis} - \L\|_2^2 \big)\\
  &= \sum_{(j,k) \in B_0}(\cbasis_{j,k} + \Xi_{j,k}^{\bm\cbasis} -
    \cbasis^{\L}_{j,k})\basis_{j,k}%
    + \sum_{(j,k) \notin B_0}(\cbasis_{j,k} + \Xi^{\bm\cbasis}_{j,k} -
    \cbasis^{\L}_{j,k})\EE_{\L}[\basis_{j,k}] + O(\|\L_{\bm\cbasis} -
    \L\|_2^2).
\end{align}
Now we remark that all the terms involved in the last display are bounded in
absolute value by $\|\bm\cbasis + \bm\Xi^{\bm\cbasis} - \bm\cbasis^{\L}\|_{1,\infty}$,
and so is $\|F_0\|_{\infty}$.
\end{proof}

\subsection{Proofs related to the control of the covariance terms}
\label{sec:relat-contr-covar}

\begin{proof}[Proof of \texorpdfstring{\cref{main-lem:4:new}}{Lemma \ref{main-lem:4:new}}]

  We first establish that $\|\SPFC\|_{\infty}$ can be made arbitrary small when
$\PFC \in \mathcal{N}_I$ by taking $K_1$ large enough in \cref{main-ass:1}.

\begin{proposition}
  \label{pro:covctrl:2}
  Suppose \cref{main-ass:1} is satisfied with $K_1 > 0$ sufficiently large. Then
  there is a universal constant $C > 0$ such that
  $\|\SPFC\|_{\infty} \leq C \log(\Gamma\minimaxrate/\gamma)\Gamma \minimaxrate$ for all $\PFC \in \mathcal{N}_I$ and all
  $I \subseteq \NNInts$.
\end{proposition}

In view of the last proposition, we now assume that $K_1$ is taken large enough
so that $\|\SPFC\|_{\infty} \ll \delta$. Since
$\EE_{\L}[\SPF] = \EE_{\L}[\SPFC] = 0$, we have
\begin{equation}
  \covlog(\SPF,\SPFC) = \log \EE_{\L}[e^{\SPF - \log \EE_{\L}[\exp(\SPF)]}e^{\SPFC - \log \EE_{\L}[\exp(\SPFC)]} ].
\end{equation}
We remark that
$\EE_{\L}[e^{\SPF}] = \EE_{\L}[1 + \SPF +
\frac{1}{2}\SPF^2e^{O(\|\SPF\|_{\infty})} ] = 1 +
\frac{1}{2}\EE_{\L}[\SPF^2]e^{O(\delta)}$.  This implies that
$-\log \EE_{\L}[e^{\SPF}] = - \frac{1}{2}\EE_{\L}[\SPF^2]e^{O(\delta)}$ as well,
and
$e^{\SPF - \log \EE_{\L}[\exp(\SPF)]} = 1+ \SPF -
\frac{1}{2}\EE_{\L}[\SPF^2]e^{O(\delta)} + \frac{1}{2}\SPF^2e^{O(\delta)}$. Also
$\EE_{\L}[e^{\SPFC - \log\EE_{\L}[\exp(\SPFC)]}]=1$, and thus
\begin{align}
  \label{eq:covctrl:dr:13}
  e^{\covlog(\SPF,\SPFC)}%
  &= \EE_{\L}\Big[\Big(1 + \SPF + \frac{1}{2}\SPF^2e^{O(\delta)} -
    \frac{1}{2}\EE_{\L}[\SPF^2]e^{O(\delta)}\Big)e^{\SPFC - \log \EE_{\L}[\exp(\SPFC)]} \Big]\\
  &=1 + \EE_{\L}[\SPF e^{\SPFC - \log \EE_{\L}[\exp(\SPFC)]}]%
    + \frac{1}{2}\EE_{\L}[\SPF^2]e^{O(\delta)}%
    - \frac{1}{2}\EE_{\L}[\SPF^2]e^{O(\delta)}\\
  &= 1 + \EE_{\L}[\SPF e^{\SPFC - \log \EE_{\L}[\exp(\SPFC)]}]
    + O( \delta \EE_{\L}[\SPF^2]).
\end{align}
Since $\|\SPFC\|_{\infty}\ll \delta$,
\begin{align}
  \label{eq:covctrl:146}
  \EE_{\L}[\SPF e^{\SPFC - \log \EE_{\L}[\exp(\SPFC)]}]^2%
  &\leq \|\SPF\|_{\infty}e^{O(\delta)}\Big|\EE_{\L}[\SPF e^{\SPFC - \log \EE_{\L}[\exp(\SPFC)]}] \Big|.
\end{align}
Therefore,
\begin{align}
  \label{eq:covctrl:18}
  \covlog(\SPF,\SPFC)
  &=(1 + O(\delta))\EE_{\L}[\SPF e^{\SPFC - \log \EE_{\L}[\exp(\SPFC)]}]
    + O(\delta \EE_{\L}[\SPF^2])\\
  &=(1 + O(\delta))\frac{\EE_{\L}[\SPF (e^{\SPFC} - 1)]}{\EE_L[e^{\SPFC}]}
    + O(\delta \EE_{\L}[\SPF^2])\\
  &= O\big(\EE_{\L}[\SPF (e^{\SPFC} - 1)]\big) +  O(\delta \EE_{\L}[\SPF^2]).
\end{align}
To prove the lemma, it is enough to obtain sharp estimates on
$\EE_L[\SPF(e^{\SPFC}-1)]$. To this end, we introduce the function
$A : \Reals \to \Reals$ such that $A(x) = (e^x - 1)/x$, and we define the
following ``covariance'' matrix
\begin{equation}
  \label{eq:covctrl:17}
  \Sigma_{j,k,j',k'}^{\PFC} \coloneqq%
  \EE_{\L}[(\basis_{j,k} - \EE_{\L}[\basis_{j,k}])(\basis_{j',k'} - \EE_{\L}[\basis_{j',k'}])A(\SPFC)].
\end{equation}
The following proposition provides the necessary estimates on
$\Sigma^{\PFC}$ that we require to control $\covlog(\SPF,\SPFC)$.

\begin{proposition}
  \label{pro:covctrl:7}
  Let $\|\SPFC\|_{\infty} \leq 1$ and let $\minsmooth = \smooth$ if $\smooth < 1$ and
  $\minsmooth$ arbitrary in $(1/2,1)$ if $\smooth \geq 1$. Then, the following are
  true for $n$ large enough (but not depending on $I$)
  \begin{enumerate}
    \item\label{item:cov2} There exists a
    constant $C > 0$ depending only on $(R,\smooth)$  and the wavelet
    basis, such that for all $(j',k') \ne (j,k)$,
    \begin{equation}
      \label{eq:covctrl:94}
      |\Sigma_{j,k,j',k'}^{\PFC}| \leq%
      C\big(1 + \|\SPFC\|_{\infty,\infty,\minsmooth}
      \big)\Big(%
      2^{-(j\vee    j')\minsmooth}
      \int|\basis_{j,k}\basis_{j',k'}|%
      + 2^{-j(\minsmooth +1/2)}2^{-j'(\minsmooth + 1/2)}\Big).
    \end{equation}
    \item\label{item:cov3} There exists a constant $C > 0$ depending only on $(R,\smooth)$
    and the wavelet basis, such that for any $(j',k') \in \kset$ and for any
    $j \leq j'$,
    $\sum_k|\Sigma_{j,k,j',k'}^{\PFC}| \leq C2^{j/2}(1 +
    \|\SPFC\|_{\infty,\infty,\minsmooth}) 2^{-j'(\minsmooth +
      1/2)}$.

    \item\label{item:cov4} Suppose \cref{main-ass:1} is satisfied for $K_0$ and $K_1$ large
    enough. Then $\|\SPFC\|_{\infty,\infty,\minsmooth} \leq \gamma$ for all $I \subseteq \NNInts$.%
  \end{enumerate}
\end{proposition}

It follows from the previous definitions that
$\EE_{\L}[\SPF(e^{\SPFC} - 1)] = \EE_{\L}[\SPF \SPFC A(\SPFC)]$. To ease the
notations, we define $f^m_{j,k} \coloneqq \Inner{F_m,\basis_{j,k}}$, so that
$F_m = \sum_{(j,k)\in B_m}f_{j,k}^m(\basis_{j,k} - \EE_{\L}[\basis_{j,k}])$, by
\cref{main-pro:coeff:new}-\eqref{main-item:coeff:1}. Hence, we can rewrite
\begin{align}
  \label{eq:covctrl:30}
  \EE_{\L}[\SPF(e^{\SPFC}-1)]%
  &= \sum_{m\in I^c}\sum_{(j,k)\in B_0}\sum_{(j',k')\in B_m}f_{j,k}^0f_{j',k'}^m\Sigma_{j,k,j',k'}^{\PFC}\1_{0\in I}\\%
  &\quad%
    +\sum_{m\in I}\sum_{(j,k)\in B_m}\sum_{(j',k')\in B_0}f_{j',k'}^0f_{j,k}^m\Sigma_{j,k,j',k'}^{\PFC}\1_{0\notin I}\\
  &\quad+
    \sum_{m\in I}\sum_{m'\in I^c}\sum_{(j,k)\in B_m}\sum_{(j',k')\in B_{m'}} f_{j,k}^mf_{j',k'}^{m'} \Sigma_{j,k,j',k'}^{\PFC}\1_{m\ne 0}\1_{m'\ne 0}.
\end{align}
We finish the proof by giving a bound on the terms
\begin{align}
  \label{eq:covctrl:61}
  R_1%
  &\coloneqq  \sum_{m\in I^c}\sum_{(j,k)\in B_0}\sum_{(j',k')\in B_m}|f_{j,k}^0||f_{j',k'}^m||\Sigma_{j,k,j',k'}^{\PFC}|\1_{0\in I},\\
  R_2%
  &\coloneqq\sum_{m\in I}\sum_{(j,k)\in B_m}\sum_{(j',k')\in B_0}|f_{j',k'}^0||f_{j,k}^m||\Sigma_{j,k,j',k'}^{\PFC}|\1_{0\notin I},\\
  R_3%
  &\coloneqq \sum_{m\in I}\sum_{m'\in I^c}\sum_{(j,k)\in B_m}\sum_{(j',k')\in B_{m'}} |f_{j,k}^m||f_{j',k'}^{m'}|| \Sigma_{j,k,j',k'}^{\PFC}|\1_{m\ne 0}\1_{m'\ne 0}.
\end{align}
We bound each of the terms above in the next paragraphs. For each term, the
bound depends on whether we are in the scenario $\smooth > 1/2$ or not. An
important quantity that shows up everywhere is
$f^{m,*} \coloneqq \sup_{(j,k)\in B_m}|\Inner{F_m,\basis_{j,k}}|$. We remark that
by construction when $m\notin I$%
\begin{align}
    \label{eq:covctrl:169}
  f^{m,*}
  &\coloneqq \sup_{(j,k)\in B_m}|\Inner{F_m,\basis_{\klm}}|%
    \leq  \|F_m\|_2%
    \leq H_I(m).
\end{align}

\paragraph*{Bound on $R_1$ : case $\smooth > 1/2$.}

If $0 \notin I$ this term is obviously
equal to zero. Thus we now assume that $0 \in I$. Then,
\begin{align}
  \label{eq:covctrl:7}
  R_1%
  &\leq f^{0,*} \sum_{m\in I^{c}} f^{m,*} \sum_{(j',k') \in B_{m}} \sum_{j=0}^{J_{0}} \sum_{k}|\Sigma_{j,k,j',k'}^{\PFC}|\\
  &\lesssim f^{0,*} (1\vee  \gamma )2^{J_{0}/2}\sum_{m\in I^{c}} f^{m,*} \sum_{(j',k')\in B_{m}} 2^{-j'(\minsmooth+1/2)}\\
  &\lesssim f^{0,*}(1\vee \gamma )2^{J_{0}/2}\sum_{m\in I^{c}} f^{m,*} 2^{-J_{m}(\minsmooth + 1/2)},
\end{align}
where the second line follows by \cref{pro:covctrl:7}, and the last
because $0\in I$ so for any $m \in I^c$ we have $|B_m| = 1$ and
$(j,k) \in B_{m} \implies j = J_m$. So by \cref{eq:covctrl:169},
\begin{align}
  \label{eq:covctrl:51}
  \sum_{m\in I^{c}} f^{m,*} 2^{-J_{m}(\minsmooth + 1/2)}%
  &\leq  \sum_{m\in I^c} H_I(m) 2^{-J_m(\minsmooth + 1/2)}\\
  &\leq \Gamma\xi^{-1} \sum_{j= J_0+1}^{j_n}\sup_{m\,:\, J_m=j}\{\rho_m\} |\Set{m\in I^c \given J_m = j}|  2^{-j(\minsmooth+1/2)}\\%
  &\qquad+ \sum_{j > j_n} \gamma 2^{-j(\smooth + \minsmooth +1)}|\Set{m\in I^c \given J_m = j}| \\
  &\lesssim \Gamma\xi^{-1} \sum_{j> J_{0}} \sup_{m\,:\, J_m =j}\{\rho_m\} 2^{-j(\minsmooth - 1/2)}%
    +\gamma 2^{-j_n(\smooth + \minsmooth)},
\end{align}
where the last line follows because $|\Set{m\in I^c \given J_m = j}| \lesssim 2^j$ for
$j > J_0$ as there are no more than $\lesssim 2^j$ at each level. We deduce that,
\begin{align}
  \label{eq:covctrl:14}
  R_1%
  &\lesssim f^{0,*}\Gamma\xi^{-1}(1\vee \gamma) 2^{J_{0}/2}\sum_{j > J_0}2^{-j(\minsmooth -1/2)}\sup_{m\,:\, J_{m}=j}\{\rho_{m}\} + f^{0,*}(1\vee \gamma)2^{J_0/2}\gamma 2^{-j_n(\smooth + \minsmooth)}\\
  &\lesssim \|F_0\|_2\sqrt{\frac{\log(n)}{n}} \cdot \frac{\Gamma(1\vee \gamma )}{\xi} 2^{J_{0}(1-\minsmooth)}%
    + \|F_0\|_2 \sqrt{\frac{\log(n)}{n}}\cdot 2^{J_0/2}\Gamma(1\vee \gamma) 2^{-j_n(\minsmooth -1/2)}
\end{align}
because $\rho_{m} = \sqrt{\log(n)/n}$, $\minsmooth > 1/2$, because
$f^{0,*} \leq \|F_0\|_2$, and because of \cref{main-eq:44}. By choosing
$K_1,K_2>0$ in \cref{main-ass:1} sufficiently large we obtain that
$R_1 \ll \delta \Gamma \sqrt{\log(n)/n}\|F_0\|_2\1_{0\in I}$.

\paragraph*{Bound on $R_1$ : case $0 < \smooth \leq 1/2$.}

The \cref{eq:covctrl:14} remains true in this case, but this time we have,
\begin{align}
  \label{eq:covctrl:100}
  \sum_{j>J_{0}}2^{-j(\minsmooth -1/2)}\sup_{m\,:\, J_{m}=j}\{\rho_{m}\}%
  &\leq \sum_{j> J_0} 2^{-j(\smooth-1/2)} \sup_{m\,:\, J_{m}=j}\big\{ 2^{-J_{m}/2} \minimaxrate \big\}\\
  &= \minimaxrate \sum_{j> J_0} 2^{-j\smooth}\\
  &\lesssim  \minimaxrate 2^{-J_0\smooth},
\end{align}
and, by \cref{main-eq:44},
\begin{align}
  \label{eq:covctrl:91}
  \gamma 2^{-j_n(\smooth + \minsmooth)}%
  &=\gamma 2^{-2j_n\smooth}%
  \lesssim \frac{\Gamma^2\minimaxrate^2}{\gamma}
\end{align}
Therefore,
\begin{align}
  \label{eq:covctrl:101}
  R_1%
  &\lesssim \|F_0\|_2 \Gamma 2^{-J_{0}/2}\minimaxrate\Big(\frac{1\vee \gamma}{\xi} 2^{J_{0}(1 - \smooth)}%
    + 2^{J_0}\frac{\Gamma(1\vee\gamma)}{\gamma} \minimaxrate\Big).
\end{align}
So $R_1 \ll \delta \Gamma 2^{-J_0/2}\minimaxrate\|F_0\|_2\1_{0\in I}$ by choosing $K_1,K_2>0$ in
\cref{main-ass:1} sufficiently large.

\paragraph*{Bound on $R_2$ : case $\smooth > 1/2$.}

$R_2$ is obviously equal to
zero of $0\in I$, hence we assume now that $0\notin I$. Then with the same
arguments as for the first term%
\begin{align}
  \label{eq:covctrl:49}
  R_2%
  &\leq f^{0,*} \sum_{m\in I} \sum_{(j,k) \in B_{m}}|f_{j,k}^m| \sum_{j'=0}^{J_{0}} \sum_{k'} |\Sigma_{j,k,j',k'}^{\PFC}|\\
    &\lesssim (1\vee \gamma )2^{J_{0}/2} f^{0,*} \sum_{m\in I}\sum_{(j,k)\in B_m} |f_{j,k}^m| 2^{-J_m(\minsmooth + 1/2)}\\
  \label{eq:covctrl:102}%
  &\lesssim (1 \vee \gamma )2^{J_{0}/2} H_I(0) \sum_{m\in I}\|F_m\|_2 2^{-J_m(\minsmooth + 1/2)},
\end{align}
where the second line follows by \cref{pro:covctrl:7}-\eqref{item:cov3} and because
$(j,k) \in B_m\implies j = J_{m}$ when $m\ne 0$ (recall $0\notin I$), the third
last line follows because $0 \notin I$ so $B_m$ is a singleton and also because
$f^{0,*} \leq H_I(0)$ when $0\notin I$. We decompose the sum in the rhs
\cref{eq:covctrl:102} into the sum of the following two terms
\begin{align}
  \label{eq:covctrl:106a}
  S_1&\coloneqq \sum_{m\in I} \|F_m\|_22^{-J_{m}(\minsmooth+1/2)}\1_{J_{m}\le j_{n}}\\
  \label{eq:covctrl:106b}
  S_2&\coloneqq \sum_{m\in I} \|F_m\|_2 2^{-J_{m}(\minsmooth+1/2)} \1_{J_{m}> j_{n}}.
\end{align}
Then, by Cauchy-Schwarz',
\begin{align}
  \label{eq:covctrl:117}
  S_1
  &\leq \Big\{ \sum_{m\in I} \|F_m\|_2^2\1_{J_{m}\leq j_{n}}\Big\}^{1/2}\Big\{\sum_{m\in I} 2^{-J_{m}(2\minsmooth+1)}\1_{J_{m}\le j_{n}} \Big\}^{1/2}.
\end{align}
But, since $0 \notin I$ we have $m \in I \Rightarrow J_{m} > J_{0}$, and since there are
$\lesssim 2^{j}$ wavelets at each level $j$, we deduce that
\begin{align}
  \label{eq:covctrl:85}
  \sum_{m\in I} 2^{-J_{m}(2\minsmooth+1)}%
  &\leq \sum _{j>J_0}|\Set{m\given J_m = j}| 2^{-j(2\minsmooth + 1)}%
    \lesssim \sum_{j>J_0}2^{-2j\minsmooth}%
    \lesssim 2^{-2J_{0}\minsmooth}.
\end{align}
Hence,
\begin{align}
  \label{eq:covctrl:118}
  S_1
  &\lesssim 2^{-J_0\minsmooth}\Big\{ \sum_{m\in I} \|F_{m}\|_2^2\1_{J_{m}\leq j_{n}}\Big\}^{1/2}.%
\end{align}
On the other hand,
\begin{align}
  \label{eq:covctrl:119}
  S_2
  &\leq \Big\{ \sum_{m\in I} \|F_m\|_2^2\1_{J_{m}> j_{n}}\Big\}^{1/2}\Big\{\sum_{m\in I} 2^{-J_{m}(2\minsmooth+1)}\1_{J_{m}> j_{n}} \Big\}^{1/2}\\
  &\lesssim \Big\{ \sum_{m\in I} \|F_m\|_2^2\1_{J_{m}> j_{n}}\Big\}^{1/2}\Big\{\sum_{j> j_{n}}2^{-2j\minsmooth} \Big\}^{1/2}\\
  &\lesssim 2^{-j_n\minsmooth}  \Big\{ \sum_{m\in I} \|F_m\|_2^2\1_{J_{m}> j_{n}}\Big\}^{1/2}.
\end{align}
Since $H_I(0) = \Gamma \sqrt{\log(n)/n}$ in this case,
\begin{multline}
  \label{eq:covctrl:161}
  R_2%
  \lesssim
    \Gamma(1 \vee \gamma)2^{J_{0}(1/2 - \minsmooth)} \sqrt{\frac{\log(n)}{n}}%
    \Big\{ \sum_{m\in I} \|F_{m}\|_2^2\1_{J_{m}\leq j_{n}}\Big\}^{1/2}\\%
    + \Gamma(1 \vee \gamma)2^{J_0/2}\cdot 2^{-j_n\minsmooth} \sqrt{\frac{\log(n)}{n}} \Big\{ \sum_{m\in I} \|F_m\|_2^2\1_{J_{m}> j_{n}}\Big\}^{1/2}.
\end{multline}
Note that in this case $\minsmooth > 1/2$, so by choosing the constants $K_0$
and $K_1$ sufficiently large, we obtain that (note that $\xi^{-\1_{0\in I}} = 1$ here)
\begin{equation}
  \label{eq:covctrl:136}
  R_2%
  \ll \delta \Gamma \xi^{-\1_{0\in I}}\sqrt{\frac{\log(n)}{n}} \Big\{ \sum_{m\in I} \|F_{m}\|_2^2\1_{J_{m}\leq j_{n}}\Big\}^{1/2}%
  + \frac{\delta}{\sqrt{n}}\Big\{ \sum_{m\in I} \|F_m\|_2^2\1_{J_{m}> j_{n}}\Big\}^{1/2}.
\end{equation}

\paragraph*{Bound on $R_2$ : case $0 < \smooth \leq 1/2$.}

We note that the equation \cref{eq:covctrl:102} and the decomposition of
\cref{eq:covctrl:106a,eq:covctrl:106b} remain true in this case. We then have,
\begin{align}
  \label{eq:covctrl:120}
  S_1
  &\leq \Big\{ \sum_{m\in I}\|F_m\|_2^2\1_{J_{m}\leq j_{n}} \Big\}^{1/2}%
    \Big\{ \sum_{m\in I}2^{-J_{m}(2\smooth + 1)}\1_{J_{m}\leq j_{n}} \Big\}^{1/2}
\end{align}
Since $0 \notin I$ we have that $m \in I \implies J_{m} \geq J_{1} > J_{0}$, so that the previous
is in fact bounded by
\begin{align}
  \label{eq:covctrl:121}
  S_1
  &\leq 2^{-J_{0}\smooth}\Big\{ \sum_{m\in I}\|F_m\|_2^2\1_{J_{m}\leq j_{n}} \Big\}^{1/2}%
    \Big\{ \sum_{m\in I}2^{-J_{m}}\1_{J_{m}\leq j_{n}} \Big\}^{1/2}.
\end{align}
On the other hand,
\begin{align}
  \label{eq:covctrl:122}
  S_2
  &\leq \Big\{\sum_{m\in I}\|F_m\|_2^2\1_{J_{m}> j_{n}}\Big\}^{1/2}%
    \Big\{\sum_{m\in I}2^{-J_{m}(2\smooth +1)}\1_{J_{m}>j_{n}}\Big\}^{1/2}.
\end{align}
Since $H_I(0) = \Gamma 2^{-J_0/2}\minimaxrate$ in this case,
\begin{multline}
  \label{eq:covctrl:171}
  R_2%
  \lesssim \Gamma (1 \vee \gamma)\cdot 2^{-J_0\smooth}\Big\{ \sum_{m\in I}\|F_m\|_2^2\1_{J_{m}\leq j_{n}}
  \Big\}^{1/2}%
  \Big\{\minimaxrate^2 \sum_{m\in I}2^{-J_{m}}\1_{J_{m}\leq j_{n}}
  \Big\}^{1/2}\\%
  + \Gamma (1 \vee \gamma) \minimaxrate\cdot \Big\{\sum_{m\in I}\|F_m\|_2^2\1_{J_{m}>
    j_{n}}\Big\}^{1/2}%
  \Big\{ \sum_{m\in I}2^{-J_{m}(2\smooth + 1)}\1_{J_{m} > j_{n}} \Big\}^{1/2}.
\end{multline}
So by choosing $K_0$ and $K_1$ large enough,
\begin{multline}
  \label{eq:covctrl:137}
  R_2 \ll%
  \delta \Gamma\xi^{-\1_{0\in I}} \Big\{ \sum_{m\in I}\|F_m\|_2^2\1_{J_{m}\leq j_{n}}
  \Big\}^{1/2}\Big\{\minimaxrate^2 \sum_{m\in I}2^{-J_{m}}\1_{J_{m}\leq j_{n}}
  \Big\}^{1/2}\\
  + \delta \gamma \Big\{\sum_{m\in I}\|F_m\|_2^2\1_{J_{m}>
    j_{n}}\Big\}^{1/2}%
  \Big\{ \sum_{m\in I}2^{-J_{m}(2\smooth + 1)}\1_{J_{m} > j_{n}} \Big\}^{1/2}.
\end{multline}

\paragraph*{Bound on $R_3$ : case $\smooth > 1/2$.}


We start with the following estimate (using \cref{eq:covctrl:169})
\begin{multline}
  \label{eq:covctrl:126}
  \sum_{m'\in I^{c}}\sum_{(j',k')\in B_{m'}} |f_{j',k'}^{m'}| |\Sigma_{j,k,j',k'}^{\PFC}|\1_{m'\ne 0}\\%
  \begin{aligned}
      &\lesssim \sum_{j'' > J_0} \sum_{m'\in I^{c}}\1_{J_{m'}=j''} \sum_{(j',k') \in B_{m'}} |f_{j',k'}^{m'}|\cdot|\Sigma_{j,k,j',k'}^{\PFC}|\1_{m'\ne 0}\\
      &\leq \sum_{j''>J_0} \sup_{m'\,:\, J_{m'} = j''}\{ H_I(m') \}\sum_{m' \in I^{c}}\1_{J_{m'}=j''}\sum_{(j',k')\in B_{m'}}|\Sigma_{j,k,j',k'}^{\PFC}| \1_{m'\ne 0}\\
      &\leq \sum_{j'>J_0} \sup_{m'\,:\, J_{m'} = j'}\{ H_I(m') \} \sum_{k'}|\Sigma_{j,k,j',k'}^{\PFC}|
  \end{aligned}
\end{multline}
where the last line follows because for $m' \ne 0$ we have
$(j',k') \in B_{m} \implies j' = J_{m'}$ and $B_{m'}$ is a singleton. By
\cref{pro:covctrl:7}-(\ref{item:cov2},\ref{item:cov4}), because there are no more than
$\lesssim 2^{j}$ wavelets at each level $j \geq 0$, and because
$\sup_x\sum_k|\basis_{j,k}(x)| \lesssim 2^{j/2}$ (see
\cref{main-pro:coeff:new}-\eqref{main-item:norm:2}), we obtain
\begin{align}
  \label{eq:covctrl:8}
  \sum_{k'} |\Sigma_{j,k,j',k'}^{\PFC}|%
  &\lesssim (1\vee\gamma)
    \Big(2^{-(j \vee j')\minsmooth}\int|\basis_{j,k}|\sum_{k'}|\basis_{j',k'}|%
    + 2^{-j(\minsmooth+1/2)}2^{-j'(\minsmooth + 1/2)}2^{j'}%
    \Big)\\
  &\lesssim (1\vee\gamma)%
    2^{-j/2}2^{j'/2}\Big(2^{-(j\vee j')\minsmooth} + 2^{-j\minsmooth}2^{-j'\minsmooth} \Big)\\
  &\lesssim%
     (1\vee\gamma)%
    2^{-j/2}2^{j'/2} 2^{-(j\vee j')\minsmooth}.
\end{align}
Therefore,
\begin{align}
  R_3%
  &\lesssim (1\vee\gamma) \sum_{\substack{m\in I\\m\ne 0}}\sum_{(j,k)\in B_{m}}|f_{j,k}^m|\sum_{j'> J_0}\sup_{m'\,:\, J_{m'}=j'}\{H_I(m')\} 2^{-J_{m}/2}2^{j'/2}2^{-(J_{m} \vee  j')\minsmooth}\\
  \label{eq:covctrl:58}
  &\lesssim (1\vee\gamma) \sum_{\substack{m\in I\\m\ne 0}}\|F_m\|_2 \sum_{j> J_0}\sup_{m'\,:\, J_{m'}=j}\{H_I(m')\} 2^{-J_{m}/2}2^{j/2}2^{-(J_{m} \vee  j)\minsmooth}.
\end{align}
We decompose
summation in the previous display into the sum of the following terms:
\begin{align}
  \label{eq:covctrl:105a}
  T_1&\coloneqq%
       \sum_{\substack{m\in I\\m\ne 0}} 2^{-J_{m}/2}\|F_m\|_2\1_{J_{m}\leq j_{n}} \sum_{j= J_{1}}^{J_{m}}\sup_{m'\,:\, J_{m'}=j}\{H_I(m')\}2^{j/2}2^{-J_{m}\minsmooth},\\
  \label{eq:covctrl:105b}
  T_2&\coloneqq%
       \sum_{\substack{m\in I\\m\ne 0}}  2^{-J_{m}/2}\|F_m\|_2\1_{J_{m}\leq j_{n}}  \sum_{j=J_{m}+1}^{j_{n}}\sup_{m'\,:\, J_{m'}=j}\{H_I(m')\} 2^{j/2}2^{-j\minsmooth},\\
  \label{eq:covctrl:105c}
  T_3&\coloneqq%
       \sum_{\substack{m\in I\\m\ne 0}}  2^{-J_{m}/2}\|F_m\|_2\1_{J_{m}\leq j_{n}}  \sum_{j>j_{n}}\sup_{m'\,:\, J_{m'}=j}\{ H_I(m') \}2^{j/2}2^{- j\minsmooth},\\
  \label{eq:covctrl:105d}
  T_4&\coloneqq%
       \sum_{\substack{m\in I\\m\ne 0}}  2^{-J_{m}/2}\|F_m\|_2\1_{J_{m}> j_{n}} \sum_{j= J_{1}}^{j_{n}}\sup_{m'\,:\, J_{m'}=j}\{ H_I(m') \} 2^{j/2}2^{-J_{m}\minsmooth},\\
  \label{eq:covctrl:105e}
  T_5&\coloneqq%
       \sum_{\substack{m\in I\\m\ne 0}} 2^{-J_{m}/2}\|F_m\|_2\1_{J_{m}> j_{n}}\sum_{j=j_{n}+1}^{J_{m}}\sup_{m'\,:\, J_{m'}=j}\{ H_I(m') \} 2^{j/2}2^{-J_{m}\minsmooth},\\
  \label{eq:covctrl:105f}
  T_6&\coloneqq%
       \sum_{\substack{m\in I\\m\ne 0}} 2^{-J_{m}/2}\|F_m\|_2\1_{J_{m}> j_{n}} \sum_{j> J_{m}}\sup_{m'\,:\, J_{m'}=j}\{ H_I(m') \} 2^{j/2}2^{-j\minsmooth}.
\end{align}
We now bound each terms using the definition of $j_{n}$ and of $H_I(m)$.
Regarding $T_1$ when $J_1 \leq J_{m'} \leq J_{m} \leq j_{n}$ we have
$H_I(m') \leq \Gamma\xi^{-\1_{0\in I}} \sqrt{\log(n)/n}$, thus
\begin{align}
  \label{eq:covctrl:106}
  T_1%
  &\lesssim \Gamma\xi^{-\1_{0\in I}} \sqrt{\frac{\log(n)}{n}} \sum_{m\in I}  2^{-J_{m}\minsmooth} \|F_m\|_2 \1_{J_{m}\leq j_{n}} \1_{m\ne 0}\\
  &\leq%
    \Gamma\xi^{-\1_{0\in I}} \sqrt{\frac{\log(n)}{n}}\Big\{\sum_{m\in I}\|F_m\|_2^2\1_{J_{m}\leq j_{n}} \Big\}^{1/2}%
    \Big\{ \sum_{m\in I}2^{-2J_{m}\minsmooth}\1_{J_{m}\leq j_{n}}\1_{m\ne 0}\Big\}^{1/2}.
\end{align}
But, for $m\ne 0$ we have $J_m > J_0$, and there are no more than
$\lesssim 2^{j}$ blocks of wavelets at each level $j$ so in fact
\begin{align}
  \label{eq:covctrl:113}
  \sum_{m\in I}2^{-2J_{m}\minsmooth}\1_{J_{m} \leq j_{n}}\1_{m\ne 0}%
  \leq \sum_{j= J_0+1}^{j_{n}} \sum_k 2^{-2j\minsmooth}
  \lesssim \sum_{j= J_0+1}^{j_{n}} 2^{j(2\minsmooth - 1)}
  \lesssim 2^{-J_{0}(2\minsmooth - 1)}.
\end{align}
Therefore,
\begin{align}
  \label{eq:covctrl:114}
  T_1
  &\lesssim%
    2^{-J_{0}(\minsmooth -1/2)}\Gamma\xi^{-\1_{0\in I}} \sqrt{\frac{\log(n)}{n}}\Big\{\sum_{m\in I}\|F_m\|_2^2\1_{J_{m}\leq j_{n}} \Big\}^{1/2}.%
\end{align}
Since $\minsmooth > 1/2$ in this case, $2^{-J_0(\minsmooth - 1/2)}$ can be made
$\ll \delta$ by taking $K_0$ in \cref{main-ass:1} sufficiently large. The same goes for $T_2$ because
$\sum_{j=J_{m+1}}^{j_{n}}2^{j/2}2^{-j\minsmooth} \lesssim 2^{-J_{m}(\minsmooth -1/2)}$,
and thus
\begin{align}
  \label{eq:covctrl:107}
  T_2
  &\lesssim%
     2^{-J_0(\minsmooth-1/2)} \Gamma\xi^{-\1_{0\in I}} \sqrt{\frac{\log(n)}{n}}\Big\{\sum_{m\in I}\|F_m\|_2^2\1_{J_{m}\leq j_{n}} \Big\}^{1/2}.%
\end{align}
Regarding $T_3$, when $J_{m'} > j_{n}$ we have
$H_I(m') \leq \gamma 2^{-J_{m'}(\smooth+1/2)}$, and so
\begin{align}
  \label{eq:covctrl:108}
  T_3%
  &\lesssim%
    \gamma \sum_{m\in I} 2^{-J_{m}/2}\|F_m\|_2\1_{J_{m}\leq j_{n}}\sum_{j>j_{n}}2^{- j(\smooth + \minsmooth)} \1_{m\ne 0},\\
  &\lesssim %
    \gamma 2^{-j_{n}(\smooth + 1/2)}2^{-j_{n}(\minsmooth - 1/2)} \sum_{m\in I} 2^{-J_{m}/2} \|F_m\|_2\1_{J_{m} \leq j_{n}} \1_{m\ne 0}\\
  &\leq \gamma 2^{-j_{n}(\minsmooth - 1/2)} \cdot \sqrt{\frac{\log(n)}{n}} \cdot \Big\{ \sum_{m\in I}\|F_m\|_2^2 \1_{J_{m}\leq j_{n}}\Big\}^{1/2}\Big\{\sum_{m\in I}2^{-J_{m}}\1_{J_{m} \leq j_{n}}\Big\}^{1/2}.
\end{align}
But, there are no more than $ \lesssim 2^{j}$ wavelets at each level $j$ so in fact
\begin{align}
  \label{eq:covctrl:115}
  \sum_{m\in I} 2^{-J_{m}}\1_{J_{m} \leq j_{n}}%
  &\leq \sum_{j\geq 0}^{j_{n}} \sum_k 2^{-j}
  \lesssim j_{n}.
\end{align}
Therefore,
\begin{align}
  \label{eq:covctrl:116}
  T_3%
  &\lesssim \gamma \sqrt{\frac{\log(n)}{n}} \cdot  \sqrt{j_{n}}2^{-j_{n}(\minsmooth -1/2)}\Big\{ \sum_{m\in I}\|F_m\|_2^2 \1_{J_{m}\leq j_{n}}\Big\}^{1/2}.
\end{align}
Regarding $T_4$,
\begin{align}
  \label{eq:covctrl:111}
  T_4%
  &\leq%
   \Gamma\xi^{-\1_{0\in I}} \sqrt{\frac{\log(n)}{n}} \sum_{m\in I} 2^{-J_{m}/2}\|F_m\|_2\1_{J_{m}> j_{n}}\sum_{j= J_{1}}^{j_{n}} 2^{j/2}2^{-J_{m}\minsmooth} \1_{m\ne 0},\\
  &\lesssim \Gamma\xi^{-\1_{0\in I}} \sqrt{\frac{\log(n)}{n}}2^{j_{n}/2}%
    \Big\{ \sum_{m\in I}\|F_m\|_2^21_{J_{m}> j_{n}} \Big\}^{1/2}%
    \Big\{ \sum_{m\in I} 2^{-J_{m}(2\minsmooth+1)}\1_{J_{m}>j_{n}} \Big\}^{1/2}.
\end{align}
But there are no more than
$\lesssim 2^{j}$ wavelets at each level $j$ so in fact
\begin{align}
  \label{eq:covctrl:47}
  \sum_{m\in I} 2^{-J_{m}(2\minsmooth+1)}\1_{J_{m}>j_{n}}%
  \leq \sum_{j> j_{n}}\sum_k 2^{-j(2\minsmooth + 1)}%
  \lesssim \sum_{j> j_{n}}2^{-2j\minsmooth}%
  \lesssim 2^{-2j_{n}\minsmooth}.
\end{align}
Therefore,
\begin{align}
  \label{eq:covctrl:60}
  T_4%
  &\lesssim
    \Gamma\xi^{-\1_{0\in I}} \sqrt{\frac{\log(n)}{n}}2^{-j_{n}(\minsmooth - 1/2)} \Big\{ \sum_{m\in I}\|F_m\|_2^2\1_{J_{m}> j_{n}} \Big\}^{1/2}
\end{align}
Regarding $T_5$, for $J_{m'} > j_{n}$ we have
$H_I(m') \leq  \gamma 2^{-J_{m'}(\smooth + 1/2)}$, thus
\begin{align}
  \label{eq:covctrl:63}
  T_5%
  &=\gamma \sum_{m\in I}  2^{-J_{m}(\minsmooth + 1/2)} \|F_m\|_2 \1_{J_{m}> j_{n}}\sum_{j=j_{n}+1}^{J_{m}} 2^{-j\smooth}\1_{m\ne 0} \\
  &\lesssim%
    \gamma 2^{-j_{n}\smooth} \sum_{m\in I}\|F_m\|_2 2^{-J_{m}(\minsmooth + 1/2)}\1_{J_{m}>j_{n}}\\
  &\leq \gamma 2^{-j_{n}\smooth} \Big\{\sum_{m\in I}\|F_m\|_2^2 \1_{J_{m}>j_{n}} \Big\}^{1/2}%
    \Big\{\sum_{m\in I}2^{-J_{m}(2\minsmooth + 1)}\1_{J_{m}>j_{n}} \Big\}^{1/2}\\
  &\lesssim \gamma 2^{-j_{n}(\smooth + \minsmooth)}\Big\{\sum_{m\in I}\|F_m\|_2^2\1_{J_{m}>j_{n}} \Big\}^{1/2},
\end{align}
where the last line follows by \cref{eq:covctrl:47}. But by \cref{main-eq:44} we have
$\gamma 2^{-j_{n}(\smooth+\minsmooth)} = \gamma 2^{-j_{n}(\smooth + 1/2)}2^{-j_{n}(\minsmooth -1/2)} \sim \Gamma \sqrt{\log(n)/n} 2^{-j_{n}(\minsmooth - 1/2)}$.
Therefore,
\begin{align}
  \label{eq:covctrl:73}
  T_5%
  &\lesssim  2^{-j_n(\minsmooth - 1/2)} \Gamma \sqrt{\frac{\log(n)}{n}}%
    \Big\{\sum_{m\in I}\|F_m\|_2^2\1_{J_{m}>j_{n}} \Big\}^{1/2}.
\end{align}
It remains $T_6$ But for $J_{m'} > J_{m} > j_{n}$ we again have that
$H_I(m') \leq  \gamma 2^{-j(\smooth + 1/2)}$, and thus
\begin{align}
  \label{eq:covctrl:76}
  T_6
  &= \gamma \sum_{m\in I}  2^{-J_{m}/2} \|F_m\|_2 \1_{J_{m}> j_{n}}\sum_{j> J_{m}}2^{-j(\smooth + \minsmooth)}\\
  &\lesssim \gamma \sum_{m\in I}\|F_m\|_2 2^{-J_{m}(s + 1/2 + \minsmooth)}\1_{J_{m}>j_{n}}\\
  &\leq \gamma \Big\{ \sum_{m\in I}\|F_{m}\|_2^2\1_{J_{m}> j_{n}} \Big\}^{1/2}%
    \Big\{\sum_{m\in I} 2^{-J_{m}(2\smooth + 1 + 2\minsmooth)}\1_{J_{m}> j_{n}} \Big\}^{1/2}\\
  &\lesssim \gamma 2^{-j_{n}(\smooth + \minsmooth)} \Big\{ \sum_{m\in I}\|F_m\|_2^2 \1_{J_{m}> j_{n}} \Big\}^{1/2},
\end{align}
where the last line follows by the same arguments that led to \cref{eq:covctrl:47}. As
before,
\begin{align}
  \label{eq:covctrl:82}
  T_6%
  &\lesssim 2^{-j_n(\minsmooth - 1/2)}\Gamma  \sqrt{\frac{\log(n)}{n}}%
    \cdot \Big\{ \sum_{m\in I}\|F_m\|_2^2 \1_{J_{m}> j_{n}} \Big\}^{1/2}.
\end{align}
The bound for $R_3$ is obtained by \cref{eq:covctrl:58} and all the estimates on
$T_1,\dots,T_6$, and by taking the constants in \cref{main-ass:1} sufficiently large.

\paragraph*{Bound on $R_3$ : case $0 < \smooth \leq 1/2$.}

We note that the \cref{eq:covctrl:58} remains valid in this case, as well as the
decomposition of \cref{eq:covctrl:105a,eq:covctrl:105b,eq:covctrl:105c,eq:covctrl:105d,eq:covctrl:105e,eq:covctrl:105f}. We
again bound each of these terms. Regarding $T_1$ when $J_1 \leq J_{m'} \leq J_{m} \leq j_{n}$,
we have $H_I(m') \leq \Gamma\xi^{-\1_{0\in I}}  2^{-J_{m'}/2}\minimaxrate$, thus%
\begin{align}
  \label{eq:covctrl:93}
  T_1%
  &\leq%
    \Gamma\xi^{-\1_{0\in I}} \sum_{m\in I} 2^{-J_{m}/2} \|F_m\|_2\1_{J_{m}\leq j_{n}}%
    \sum_{j=J_{1}}^{J_{m}} \minimaxrate 2^{-J_{m}\smooth}\1_{m\ne 0}\\
  &\lesssim \Gamma \xi^{-\1_{0\in I}} \minimaxrate \sum_{m\in I} J_{m} 2^{-J_{m}(\smooth + 1/2)}\|F_m\|_2\1_{J_{m}\leq j_{n}}\1_{m\ne 0}\\
  &\leq \Gamma \xi^{-\1_{0\in I}} \minimaxrate%
    \Big\{ \sum_{m\in I}\|F_m\|_2^2 \1_{J_{m}\leq j_{n}}\1_{m\ne 0} \Big\}^{1/2}%
    \Big\{ \sum_{m\in I} J_{m}2^{-J_{m}(2\smooth + 1)}\1_{J_{m}\leq j_{n}}\1_{m\ne 0} \Big\}^{1/2}\\
  &\leq  \Gamma \xi^{-\1_{0\in I}} J_02^{-J_{0}\smooth} \minimaxrate%
     \Big\{ \sum_{m\in I} \|F_m\|_2^2 \1_{J_{m}\leq j_{n}}\1_{m\ne 0} \Big\}^{1/2}%
    \Big\{ \sum_{m\in I}2^{-J_{m}}\1_{J_{m}\leq j_{n}}\1_{m\ne 0} \Big\}^{1/2},
\end{align}
where the last line follows for $J_{0}$ taken sufficiently large, because
$m \ne 0 \Rightarrow J_{m}> J_{0}$, and thus
$J_m2^{-2J_{m}\smooth} \leq J_02^{-2J_{0}\smooth}$. Regarding $T_2$, we
have using the same arguments
\begin{align}
  T_2%
  &\leq  \Gamma \xi^{-\1_{0\in I}} \minimaxrate \sum_{m\in I} 2^{-J_{m}/2} \|F_m\|_2 \1_{J_{m}\leq j_{n}} \sum_{j=J_{m}+1}^{j_{n}}
     2^{-j\smooth} \1_{m\ne 0}\\
  &\lesssim \Gamma  \xi^{-\1_{0\in I}} \minimaxrate \sum_{m\in I}  2^{-J_{m}(\smooth + 1/2)}\|F_m\|_2\1_{J_{m}\leq j_{n}}\1_{m\ne 0}\\
  & \leq \Gamma \xi^{-\1_{0\in I}} \minimaxrate \Big\{\sum_{m\in I}\|F_m\|_2^2\1_{J_{m}\leq j_{n}}\1_{m\ne 0} \Big\}^{1/2}%
    \Big\{\sum_{m\in I} 2^{-J_{m}(2\smooth + 1)}\1_{J_{m} \leq j_{n}}\1_{m\ne 0} \Big\}^{1/2}\\
  &\leq \Gamma \xi^{-\1_{0\in I}} 2^{-J_{0}\smooth} \minimaxrate \Big\{\sum_{m\in I}\|F_m\|_2^2 \1_{J_{m}\leq j_{n}}\1_{m\ne 0} \Big\}^{1/2}%
    \Big\{\sum_{m\in I}2^{-J_{m}}\1_{J_{m} \leq j_{n}}\1_{m\ne 0} \Big\}^{1/2}.
\end{align}
Now for $T_3$,
\begin{align}
  T_3%
  &\leq  \gamma \sum_{m\in I} 2^{-J_{m}/2} \|F_m\|_2 \1_{J_{m}\leq j_{n}} \sum_{j>j_{n}}2^{-2j\smooth}\1_{m\ne 0}\\
  &\lesssim \gamma  2^{-2j_{n}\smooth}\sum_{m\in I} 2^{-J_{m}/2} \|F_m\|_2 \1_{J_{m}\leq j_{n}}\1_{m\ne 0}\\
  &\leq \gamma 2^{-2j_{n}\smooth}\Big\{ \sum_{m\in I}\|F_m\|_2^21_{J_{m}\leq j_{n}}\1_{m\ne 0} \Big\}^{1/2}%
    \Big\{ \sum_{m\in I}2^{-J_{m}}\1_{J_{m}\leq j_{n}}\1_{m\ne 0} \Big\}^{1/2}.
\end{align}
But we note that $\gamma 2^{-2j_{n}\smooth} \lesssim \Gamma^2\minimaxrate^2/\gamma$ by \cref{eq:covctrl:91},
and hence we have,%
\begin{align}
  \label{eq:covctrl:99}
  T_3%
  &\lesssim \frac{\Gamma \minimaxrate}{\gamma}  \cdot\Gamma \minimaxrate \Big\{ \sum_{m\in I}\|F_m\|_2^2\1_{J_{m}\leq j_{n}}\1_{m\ne 0} \Big\}^{1/2}%
    \Big\{ \sum_{m\in I} 2^{-J_{m}}\1_{J_{m}\leq j_{n}}\1_{m\ne 0} \Big\}^{1/2}.
\end{align}
Now for $T_4$, we have
\begin{align}
  T_4%
  &\leq \Gamma \xi^{-\1_{0\in I}} \minimaxrate \sum_{m\in I}  2^{-J_{m}(\smooth +1/2)}\|F_m\|_2\1_{J_{m}> j_{n}} (j_{n} -J_{1})\1_{m\ne 0}\\
  &\leq j_{n}\Gamma \xi^{-\1_{0\in I}} \minimaxrate \Big\{\sum_{m\in I}\|F_m\|_2^2\1_{J_{m} > j_{n}} \Big\}^{1/2}%
    \Big\{ \sum_{m\in I} 2^{-J_{m}(2\smooth + 1)}\1_{J_{m}> j_{n}} \Big\}^{1/2}.
\end{align}
Similarly,
\begin{align}
  T_5%
  &\leq \gamma \sum_{m\in I} 2^{-J_{m}(\smooth + 1/2)}\|F_m\|_2\1_{J_{m}> j_{n}}\sum_{j=j_{n}+1}^{J_{m}} 2^{-j\smooth} \1_{m\ne 0},\\
  &\lesssim \gamma 2^{-j_{n}\smooth}%
    \sum_{m\in I} 2^{-J_{m}(\smooth + 1/2)}\|F_m\|_2 \1_{J_{m}>j_{n}}\\
  &\lesssim \gamma 2^{-j_n\smooth}%
    \Big\{\sum_{m\in I}\|F_m\|_2^2\1_{J_{m}>j_{n}} \Big\}^{1/2}%
    \Big\{\sum_{m\in I} 2^{-J_{m}(2\smooth + 1)}\1_{J_{m}>j_{n}}\Big\}^{1/2}.
\end{align}
Finally,
\begin{align}
  T_6%
  &\leq \gamma \sum_{m\in I} 2^{-J_{m}/2} \|F_m\|_2\1_{J_{m}> j_{n}}\sum_{j> J_{m}}2^{-2j\smooth} \\
  &\lesssim \gamma  \sum_{m\in I}  2^{-J_{m}(2\smooth + 1/2)}\|F_m\|_2\1_{J_{m}> j_{n}}\\
  &\lesssim \gamma \Big\{\sum_{m\in I}\|F_m\|_2^2\1_{J_{m}>j_{n}} \Big\}^{1/2}%
    \Big\{\sum_{m\in I}2^{-J_{m}(4\smooth + 1)}\1_{J_{m}>j_{n}} \Big\}^{1/2}\\
  &\leq \gamma 2^{-j_{n}\smooth}%
    \Big\{\sum_{m\in I}\|F_m\|_2^2\1_{J_{m}>j_{n}} \Big\}^{1/2}%
    \Big\{\sum_{m\in I}2^{-J_{m}(2\smooth + 1)}\1_{J_{m}>j_{n}} \Big\}^{1/2}.
\end{align}
The bound for $R_3$ is obtained by \cref{eq:covctrl:58} and all the estimates on
$T_1,\dots,T_6$ and by taking the constants in \cref{main-ass:1} sufficiently large.
\end{proof}

\begin{proof}[Proof of \cref{pro:covctrl:2}]
We use that
$F_{m} = \sum_{(j,k) \in B_{m}}\Inner{F_{m},\basis_{j,k}}(\basis_{j,k} - \EE_{\L}[\basis_{j,k}])$
by \cref{main-pro:coeff:new}-\eqref{main-item:coeff:1}. Then, by
\cref{main-pro:coeff:new}-\eqref{main-item:norm:2} we deduce
that,
\begin{align}
  \label{eq:6}
  |\rSPFC(x)|%
  & \leq \sum_{m\in I^{c}}\sum_{(j,k) \in B_{m}}|\Inner{F_{m},\basis_{j,k}}||\basis_{j,k}(x) - \EE_{\L}[\basis_{j,k}]|\\
  &\leq \sum_{j\geq 0} \sup_{m \in I^{c},\, J_{m}=j}\Big\{ \|F_{m}\|_{2}\sum_{k}|\basis_{j,k}(x) - \EE_{\L}[\basis_{j,k}]| \Big\}\\
  &\lesssim \sum_{j\geq 0} \sup_{m\,:\; J_{m}=j}\{H_I(m)2^{J_{m}/2}\}
\end{align}
By the same argument as in the proof of \cref{main-cor:strategybound}, the previous
is seen to be
$\lesssim \max\{(\Gamma/\gamma)^{\frac{2\smooth}{2\smooth + 1}},\, \Gamma(\gamma/\Gamma)^{\frac{1}{2\smooth+1}}\}\minimaxrate$
when $\smooth > 1/2$, and $\lesssim \log(\Gamma \minimaxrate/\gamma)\Gamma\minimaxrate$ otherwise.
\end{proof}

\begin{proof}[Proof of \cref{pro:covctrl:7}, \cref{item:cov2}]
We define the function $h(x) \coloneqq \dens_{\L}(x)A(\SPFC(x))$. With this
definition, we can rewrite
\begin{align}
  \label{eq:covctrl:67}
  \Sigma^{\PFC}_{j,k,j',k'}%
  &= \int h\cdot\big( \basis_{j,k} - \EE_{\L}[\basis_{j,k}] \big)%
    \big( \basis_{j',k'} - \EE_{\L}[\basis_{j',k'}]\big)\\
  &= \int h\cdot \basis_{j,k}\basis_{j',k'}%
    - \EE_{\L}[\basis_{j,k}]\int h \cdot\basis_{j',k'}\\%
  &\quad%
    - \EE_{\L}[\basis_{j',k'}]\int h\cdot \basis_{j,k}%
    + \EE_{\L}[\basis_{j,k}]\EE_L[\basis_{j',k'}]\int h.
\end{align}
By assumption $\|\SPFC\|_{\infty} \leq 1$, thus
$\|A(\SPFC(\cdot))\|_{\infty} \lesssim 1 $ too. Further, for any $g \in \zygmund$ we have
$|\int g\basis_{j,k}| \leq
C\|g\|_{\infty,\infty,\smooth}2^{-j(\smooth + 1/2)}$ for a universal
constant depending eventually on $\smooth$; see for instance
\citet{gine2016mathematical}. Thus, we deduce that for $C > 0$ eventually
depending on $\minsmooth$,
\begin{equation}
  \label{eq:covctrl:69}
  |\Sigma_{j,k,j',k'}^{\PFC}|%
  \leq \Big| \int h \cdot \basis_{j,k}\basis_{j',k'}\Big|%
  + C\max\big\{\|p\|_{\infty,\infty,\minsmooth},\,
  \|h\|_{\infty,\infty,\minsmooth} \big\}%
  2^{-j(\minsmooth + 1/2)}2^{-j'(\minsmooth + 1/2)}.
\end{equation}
We remark that $p = \exp\{\L\}$, and thus $\|p\|_{\infty,\infty,\minsmooth}$ is
in turn bounded by a constant depending only on $(R,\smooth)$. Further $(j,k)
\ne (j',k')$, thus $\int \basis_{j,k}\basis_{j',k'} = 0$, hence for any
$y \in [0,1] $,
\begin{equation}
  \label{eq:covctrl:36}
  \int h\cdot \basis_{j,k}\basis_{j',k'}%
  = \int_{\domain} \big(h(x) - h(y)\big)\basis_{j,k}(x)\basis_{j',k'}(x)\intd x.
\end{equation}
If $S_{j,k,j',k'} \coloneqq \supp \basis_{j,k} \cap \supp \basis_{j',k'} =
\varnothing$, then $\int h\cdot \basis_{j,k}\basis_{j',k'} = 0$ and the result is
immediate. We now consider that $S_{j,k,j',k'} \ne \varnothing$. In this
case, pick $y \in S_{j,k,j',k'}$ arbitrary and remark that for all
$x \in S_{j,k,j',k'}$ we have
$|x - y| \leq C2^{-j \vee j'}$ for a constant $C > 0$
depending only on the wavelet basis. Then by \cref{eq:covctrl:36},
\begin{align}
  \Big| \int h\cdot \basis_{j,k}\basis_{j',k'} \Big|%
  &\leq \sup_{x\ne y} \frac{|h(x) -
    h(y)|}{|x - y|^{\minsmooth}} \int_{\domain}|x -
  y|^{\minsmooth}|\basis_{j,k}(x)\basis_{j',k'}(x)|\,\intd x\\
  \label{eq:covctrl:95}
  &\leq \sup_{x\ne y} \frac{|h(x) - h(y)|}{|x - y|^{\minsmooth}}%
    \cdot 2^{-(j \vee j')\minsmooth}\int |\basis_{j,k}\basis_{j',k'}|.
\end{align}
Then, we obtain the result of the proposition by bounding the
$\minsmooth$-Hölder norm of $h$. For all $x,y \in \domain$,
\begin{align}
  \label{eq:covctrl:54}
  |h(x) - h(y)|
  &\leq |\dens_{\L}(x) - \dens_{\L}(y)|\cdot |A(\SPFC(x))|%
    + \dens_{\L}(y)|A(\SPFC(x)) - A(\SPFC(y))|\\
  &= |e^{\L(x)} - e^{\L(y)}|\cdot |A(\SPFC(x))|%
    + e^{\L(y)}|A(\SPFC(x)) - A(\SPFC(y))|.
\end{align}
Whenever $\L \in \dclass(R,\smooth)$, there is a constant $C > 0$
depending only on $R$ such that
$|e^{\L(x)} - e^{\L(y)}| \leq C|\L(x) - \L(y)|$
and $\sup_x e^{\L(x)} \leq C$. Also $\|\SPFC\|_{\infty} \leq 1$ thus
$\sup_x |A(\SPFC(x))| \lesssim 1$ and $\sup_x|A'(\SPFC(x))| \lesssim  1$. We deduce,
\begin{align}
  \label{eq:covctrl:56}
  |h(x) - h(y)|%
  &\lesssim  C|\L(x) - \L(y)|%
    + C|\SPFC(x) - \SPFC(y)|.
\end{align}
By construction $0 < \minsmooth < 1$, which implies that the $\minsmooth$-Hölder
norm is equivalent to the $\|\cdot\|_{\infty,\infty,\minsmooth}$ norm
\citep[Equations 4.149 and 4.152]{gine2016mathematical}. Then, since
$\L \in \dclass(R,\smooth) \subseteq \dclass(R,\minsmooth)$,
\begin{equation}
  \label{eq:covctrl:59}
  \sup_{x\ne y} \frac{|h(x) - h(y)|}{|x - y|^{\minsmooth}} \lesssim  RC +
  C\|\SPFC\|_{\infty,\infty,\minsmooth}.
\end{equation}
By equivalence of norms, $\|h\|_{\infty,\infty,\minsmooth}$ is also bounded by a
constant (eventually depending on $\minsmooth$) times the last display. Hence,
the conclusion follows by combining \cref{eq:covctrl:69,eq:covctrl:95} with the
last display.
\end{proof}

\begin{proof}[Proof of \cref{pro:covctrl:7}, \cref{item:cov3}]
By the \cref{item:cov2},
\begin{align}
  \label{eq:covctrl:131}
  \sum_k|\Sigma^{\PFC}_{j,k,j',k'}|%
  &\lesssim%
    (1 + \|\SPFC\|_{\infty,\infty,\minsmooth})\Big\{ 2^{-j'\minsmooth} \int|\basis_{j',k'}| \sum_k|\basis_{j,k}|%
    + 2^{-j'(\minsmooth + 1/2)} \sum_k2^{-j(\minsmooth + 1/2)}\Big\}\\
  &\lesssim (1 + \|\SPFC\|_{\infty,\infty,\minsmooth})\Big\{2^{-j'\minsmooth}2^{j/2} \int|\basis_{j',k'}|%
    + 2^{-j'(\minsmooth + 1/2)} 2^{-j \minsmooth}2^{j/2}\Big\},
\end{align}
where the last line follows by \cref{main-pro:coeff:new}-\eqref{main-item:norm:2} and
also because there are no more than $\lesssim 2^j$ wavelets at each level $j$. The
conclusion follows because $\|\basis_{j',k'}\|_1 \lesssim 2^{-j'}$ for all
$(j',k') \in \kset$.
\end{proof}

\begin{proof}[Proof of \cref{pro:covctrl:7}, \cref{item:cov4}]
Note that if $(j,k) \in B_0$ then
\begin{align}
  2^{j (\minsmooth + 1/2)}|\Inner{\rSPFC,\basis_{j,k}}|%
  &\leq 2^{j(\minsmooth + 1/2)}\|\rSPFC\|_{\infty}\|\basis_{j,k}\|_1\\
  &\leq 2^{J_0\minsmooth}\|\rSPFC\|_{\infty}\\
  \label{eq:covctrl:89}
  &\lesssim 2^{J_0\minsmooth} \log(\Gamma\minimaxrate/\gamma)\Gamma \minimaxrate,
\end{align}
where the last line follows by \cref{pro:covctrl:2}. In the case where
$(j,k)\in B_0^c \cap \Set{(j,k) \given j \leq j_n}$, then by
\cref{main-pro:coeff:new}-\eqref{main-item:coeff:2} (since under \cref{main-ass:1} we can
assume wlog that $J_0$ is arbitrarily large),%
\begin{align}
  \label{eq:covctrl:92}
  2^{j (\minsmooth + 1/2)}|\Inner{\rSPFC,\basis_{j,k}}|%
  &\leq  2^{j(\minsmooth + 1/2)}\sup_{m\in I^c,m\ne 0}|\Inner{F_m,\basis_{j,k}}|\\
  &\leq 2^{j(\minsmooth + 1/2)} \Gamma \xi^{-\1_{0\in I}} \sup_{m:J_m=j}\{\rho_m\}\\
  &\leq \xi^{-\1_{0\in I}}%
    \begin{cases}
      2^{j_n(\smooth + 1/2)} \Gamma \sqrt{\log(n)/n} &\mathrm{if}\ \smooth > 1/2,\\
      \Gamma 2^{j_n\smooth} \minimaxrate &\mathrm{if}\ \smooth > 1/2,
    \end{cases}
\end{align}
where the last line follows from the definition of $\rho_m$ and because $j \le j_n$.
Then by definition of $j_n$ in \cref{main-eq:44}, for all
$(j,k)\in B_0^c \cap \Set{(j,k) \given j \leq j_n}$
\begin{align}
  \label{eq:covctrl:97}
  2^{j (\minsmooth + 1/2)}|\Inner{\rSPFC,\basis_{j,k}}|%
  \leq \xi^{-\1_{0\in I}}\gamma.
\end{align}
Finally, in the case where $(j,k)\in B_0^c \cap \Set{(j,k) \given j > j_n}$, then by
\cref{main-pro:coeff:new}-\eqref{main-item:coeff:2}
\begin{align}
  \label{eq:covctrl:109}
  2^{j (\minsmooth + 1/2)}|\Inner{\rSPFC,\basis_{j,k}}|%
  &\leq 2^{j(\minsmooth + 1/2)}\sup_{m\in I^c,m\ne 0}|\Inner{F_m,\basis_{j,k}}|
  \leq \gamma.
\end{align}
Combining \cref{eq:covctrl:89,eq:covctrl:97,eq:covctrl:109}, there is a universal $C > 0$ such that,
\begin{align}
  \label{eq:covctrl:1}
  \|\rSPFC\|_{\infty,\infty,\minsmooth}%
  \leq  \max\{2^{J_0\minsmooth}\log(\Gamma\minimaxrate/\gamma)\Gamma \minimaxrate,\,\gamma\}.
\end{align}
So if $K_1$ in \cref{main-ass:1} is
large enough we have that $ \|\rSPFC\|_{\infty,\infty,\minsmooth} \leq \gamma$.%
\end{proof}

\subsection{Proofs of \texorpdfstring{\cref{main-lem:2,main-lem:3}}{Lemmas
    \ref{main-lem:2} and \ref{main-lem:3}}}
\label{sec:proofs-lem:2-lem:3}

\begin{proof}[Proof of \cref{main-lem:2}]
We prove the lemma by computing an upper bound on $\Pi(\mathcal{A}_I)$ and a
lower bound on $\Pi(\LKL_I)$. We start with the upper bound.
Recall that by construction we have
$F_m^{\bm\cbasis} = \sum_{(j,k)\in B_m}(\cbasis_{j,k} - \cbasis^{\L}_{j,k})(\basis_{j,k} - \EE_{\L}[\basis_{j,k}])$.
Take $m \in I  \cap \Set{m\given J_m > j_n}$, assuming without loss of
generality that this set is not empty. Since $j_n \gg 1$ we have by
\cref{main-pro:coeff:new}%
\begin{align}
  \label{eq:sasthm:19}
  \sup_{(j,k)\in B_m}| \cbasis_{j,k} - \cbasis_{j,k}^{\L}|%
  &= \sup_{(j,k)\in B_m}|\Inner{F_m^{\bm\cbasis},\basis_{j,k}}|%
  = \|F_m^{\bm\cbasis}\|_2.
\end{align}
Hence, if $\PF^{\bm\cbasis} \in \mathcal{A}_I$,  then
\begin{align}
  \label{eq:sasthm:22}
  \sup_{(j,k)\in B_m}| \cbasis_{j,k} - \cbasis_{j,k}^{\L}|%
  \geq \gamma 2^{-J_m(\smooth + 1/2)}.
\end{align}
But by assumption $|\cbasis_{j,k}^{\L}| \leq R 2^{-j(\smooth + 1/2)}$. This
implies that if $K_3 > R$ in \cref{main-ass:1} then $\gamma > R$ and it must the
case that $\PF^{\bm\cbasis} \in \mathcal{A}_I$ implies that $\cbasis_{\psi(m)} \ne 0$
for  $m \in I \cap \Set{m \given J_m > j_n}$. We deduce
that%
\begin{align}
  \label{eq:sasthm:25}
  \Pi(\mathcal{A}_I)%
  \leq \prod_{\substack{m\in I\\J_m>j_n}}\omega_{J_m}%
  \leq \exp\Big\{(1 + \mustar)\log(2)\sum_{m\in I}J_m\1_{J_m>j_n} \Big\}.
\end{align}
The previous bound is true for any $I \subseteq \NNInts$. We remark, however, that
$\omega_{J_m} = 0$ when $J_m > \log(n)/\log(2)$, whence the claim that
$\Pi(\mathcal{A}_I) / \Pi(\LKL_I) = 0$ if
$I \cap \Set{m \given J_m > \log(n)/\log(2)} \ne \varnothing$ (it is always the case
that $\Pi(\LKL_I) > 0$, see below). We now compute a lower bound on $\Pi(\LKL_I)$.
Consider the set
\begin{equation}
  \label{eq:sasthm:34b}
  E \coloneqq%
  \Set*{%
    \PF^{\bm\cbasis}\given%
    \begin{array}{c}
      m \in I \cap \Set{m \given J_m \leq j_n} \implies \sup_{(j,k)\in B_m}\sqrt{|B_m|}|\cbasis_{j,k} - \cbasis_{j,k}^{\L}| \leq \eta_n,\\
      m \in I \cap \Set{m \given J_m > j_n} \implies \cbasis_{\psi(m)} = 0.
    \end{array}
  }.
\end{equation}
We will show that $E \subseteq \LKL_I$ for suitable choice of $\eta_n$, and then we will
bound $\Pi(\LKL_I) \geq \Pi(E)$. Pick $\PF^{\bm\cbasis} \in E$. By
\cref{main-pro:coeff:new}, we deduce that%
\begin{align}
  \label{eq:sasthm:28}
  \EE_{\L}[\SPF^2]%
  &\lesssim \sum_{m\in I}\|F_m\|_2^2\\
  &\lesssim \eta_n^2 \sum_{m\in I} \1_{J_m \leq j_n}%
    + \sum_{m\in I} \sup_{(j,k)\in B_m}|\cbasis_{j,k}^{\L}|^2\1_{J_m > j_{n}}\\
  &\leq \eta_n^2 \sum_{m\in I}\1_{J_m \leq j_n}%
    +  R^2 \sum_{m\in I}2^{-J_m(2\smooth + 1)}\1_{J_m > j_n}.
\end{align}
We deduce from \cref{main-pro:1} that if $K_3$ in the \cref{main-ass:1} is sufficiently
large and if $\eta_n = a \delta \sqrt{\log(n)/n}$ for small enough constant $a > 0$,
then $\EE_{\L}[\SPF^2] \leq \delta^2\natrate_I^2$ for all $\PF^{\bm\cbasis}\in E$.
Similarly, by \cref{main-pro:coeff:new}-\eqref{main-item:norm:2},
\begin{align}
  \label{eq:sasthm:34b}
  \|\SPF\|_{\infty}%
  &= \sup_{x\in \domain}\Big| \sum_{m\in I}\sum_{(j,k)\in B_m}(\cbasis_{j,k} - \cbasis_{j,k}^{\L})(\basis_{j,k}(x) - \EE_{\L}[\basis_{j,k}]) \Big|\\
  &\leq \sum_{j\geq J_0} \sup_{\substack{m\in I\\ J_m = j}}\sup_{(j',k')\in B_m}|\cbasis_{j',k'} - \cbasis_{j',k'}^{\L}| \sup_{x\in \domain} \sum_k|\basis_{j,k}(x) - \EE_{\L}[\basis_{j,k}]| \\
  &\lesssim \eta_n \sum_{j = 0}^{j_n}2^{j/2}  + \sum_{j> j_n}2^{j/2}\cdot R2^{-j(\smooth + 1/2)}\\
  &\lesssim \eta_n 2^{j_n/2}+ R 2^{-j_n\smooth}.
\end{align}
Thus, if $\eta_n$ is taken as above and $K_1$ in \cref{main-ass:1} is large enough, it
is the case that $\|\SPF\|_{\infty} \leq \delta$ for all $\PF^{\bm\cbasis}\in E$. We thus have
proven that $E \subseteq \LKL_I$. Further,
\begin{align}
  \label{eq:sasthm:41}
  \Pi(E)%
  &\geq \prod_{\substack{m\in I\\J_m \leq j_n}}\prod_{(j,k) \in B_m}\Big(\omega_jQ_j\big( \sqrt{|B_m|}|\cbasis_{j,k} - \cbasis_{j,k}^{\L}| \leq \eta_n  \big) \Big) \prod_{\substack{m\in I\\J_m > j_n}}\prod_{(j,k) \in B_m}(1 - \omega_j).
\end{align}
First we note that
\begin{align}
  \prod_{\substack{m\in I\\J_m > j_n}}\prod_{(j,k) \in B_m}(1 - \omega_j)%
  &\geq \exp\Big\{-2 \sum_{m\in I}\sum_{(j,k)\in B_m}2^{-j(1+\mustar)}\1_{J_m\leq j_n} \Big\}\\
  &\geq \exp\Big\{-2 \sum_{j = 0}^{j_n} \sum_k2^{-j(1+\mustar)} \Big\}\\
  \label{eq:sasthm:46}
  &\geq C_1,
\end{align}
for a universal constant $C_1 > 0$, where the third line follows because there
are no more than $\lesssim 2^j$ wavelets at each level $j$, and hence
$\sum_{j = 0}^{j_n} \sum_k2^{-j(1+\mustar)} \lesssim 1$ as $\mustar > 0$. Similarly, since
$\omega_j \geq a_12^{-j(1+b_1)}$,
\begin{align}
  \label{eq:sasthm:68}
  \prod_{\substack{m\in I\\J_m \leq j_n}}\prod_{(j,k) \in B_m}\omega_j%
  &\geq \exp\Big\{-\log \frac{1}{a_1}\sum_{m\in I}|B_m|\1_{J_m \leq j_n} -b_1 \sum_{m\in I}\sum_{(j,k)\in B_m}j\1_{J_m> j_n} \Big\}\\
  &\geq \exp\Big\{-\log \frac{1}{a_1}\sum_{m\in I}|B_m|\1_{J_m \leq j_n} -b_1 \sum_{m\in I}|B_m|J_m\1_{J_m> j_n} \Big\},
\end{align}
because by construction $(j,k)\in B_m \implies j \leq J_m$ (with equality
whenever $m \geq 1$). We remark that $|B_0|J_0 \lesssim J_02^{J_0} \leq \log(n)$ (by choosing
$K_1$ large enough in \cref{main-ass:1}), and $|B_m|J_m = J_m \leq j_n \lesssim \log(n)$ for
all $1 \leq m \leq j_n$. Hence, there is a universal constant $C_2 > 0$ such that
\begin{align}
  \label{eq:sasthm:71}
  \prod_{\substack{m\in I\\J_m \leq j_n}}\prod_{(j,k)\in B_m}\omega_j%
  \geq \exp\Big\{- C_2 \log(n)\sum_{m\in I}\1_{J_m \leq j_n} \Big\}.
\end{align}
Finally, for a universal constant $C_3 > 0$,%
\begin{align}
  \label{eq:sasthm:74}
  Q_j\big(  \sqrt{|B_m|}|\cbasis_{j,k} - \cbasis_{j,k}^{\L}| \leq \eta_n  \big)
  &= F\Big(|X - 2^{j(\smooth_0+1/2)}\cbasis_{j,k}^{\L}| \leq 2^{j(\smooth_0 + 1/2)}\sqrt{\frac{\eta_n}{|B_m|}} \Big)\\
  &\geq C_3 2^{j(\smooth_0+1/2)} \sqrt{\frac{\eta_n}{|B_m|}}\\
  &\geq n^{-1/2}.
\end{align}
The second inequality is true because by
assumption
$2^{j(\smooth_0 +1/2)}|\cbasis_{j,k}^{\L}|\leq R 2^{j(\smooth_0 -\smooth)} \leq R$
since $\smooth \geq \smooth_0$; henceforth the distribution $F$ has density bounded
from below by $C_3/2$ in a neighborhood of
$2^{j(\smooth_0 +1/2)}|\cbasis_{j,k}^{\L}|$, with $C_3$ not depending on
the choice of the wavelet coefficient. The last inequality is true whenever
$K_1$ in \cref{main-ass:1} is taken large enough. Then,
\begin{align}
  \label{eq:sasthm:75}
  \prod_{\substack{m\in I\\J_m \leq j_n}}\prod_{(j,k) \in B_m}Q_j\big(\sqrt{|B_m|} |\cbasis_{j,k} - \cbasis_{j,k}^{\L}| \leq \eta_n  \big)
  &\geq%
    \exp\Big\{- \frac{\log(n)}{2}\sum_{m\in I}|B_m|\1_{J_m \leq j_n} \Big\}.
\end{align}
The conclusion of the lemma follows by combining \cref{eq:sasthm:41,eq:sasthm:46,eq:sasthm:71,eq:sasthm:75}.
\end{proof}

\begin{proof}[Proof of \cref{main-lem:3}]
We use the bound of \cref{main-lem:2} in conjunction with \cref{main-thm:dr:1} to obtain a
clean bound on $\EE_{\L}\Pi(\consistencyset \cap \sliceset_I \mid \obs)$. We remark that
\cref{main-lem:2,main-thm:dr:1} imply that
$\EE_L\Pi(\consistencyset \cap \sliceset_I \mid \obs) = 0$ whenever
$I \cap \Set{m \given J_m > \log(n)/\log(2)} \ne \varnothing$. Hence, the bound in
the lemma holds trivially when
$I \cap \Set{m \given J_m > \log(n)/\log(2)} \ne \varnothing$, and we will now assume
without loss of generality that
$I \cap \Set{m \given J_m > \log(n)/\log(2)} = \varnothing$. We will distinguish
between two cases, according to whether
$\frac{(1+\mustar)\log(2)}{2}\sum_{m\in I}J_m\1_{J_m> j_n} \leq \frac{c_2n\natrate_I^2}{4}$
or not.

\paragraph*{Case
  $\frac{(1+\mustar)\log(2)}{2}\sum_{m\in I}J_m\1_{J_m> j_n} \leq \frac{c_2n\natrate_I^2}{4}$.}

We claim that $n\natrate_I^2 \gtrsim \log(n)$ in this scenario. Indeed, if
$\sum_{m\in I}\1_{J_m> j_n} \geq 1$ then
$n\natrate_I^2 \gtrsim \sum_{m\in I}J_m\1_{J_m > j_n} \geq j_n \gtrsim \log(n)$ by definition of
$j_n$. But if $\sum_{m\in I}\1_{J_m > j_n} =  0$, then it must be the case that
$\sum_{m\in I}\1_{J_m\leq j_n} \geq 1$ since by assumption $I \ne \varnothing$. Therefore,
$n\natrate_I^2 \gtrsim \log(n)$ by the estimate of \cref{main-pro:1}. Then, by picking
$\alpha =1/2$ in \cref{main-thm:dr:1},
\begin{align}
  \label{eq:sasthm:79a}
  \EE_{\L}\Pi(\consistencyset \cap \sliceset_I \mid \obs)^{1+2\delta}%
  &\lesssim \exp\Big\{- \frac{c_2n\natrate_I^2}{2} + \frac{c_12^{J_0}|I|}{2}  \Big\} \frac{\Pi(\mathcal{A}_I)^{1/2}}{\Pi(\LKL_I)^{1/2}}.
\end{align}
Using the estimate of \cref{main-lem:2} and the \cref{main-pro:1}, we find that
\begin{align}
  \frac{1}{2}\log \frac{\Pi(\mathcal{A}_I)}{\Pi(\LKL_I)}%
  &\leq \frac{c_4 \log(n)}{2}\sum_{m\in I}2^{J_0\1_{m=0}}\1_{J_m \leq j_n}%
    - \frac{(1+\mustar)\log(2)}{2} \sum_{m\in I}J_m\1_{J_m > j_n}\\
  &\leq \frac{c_4}{2C\Gamma^2}\Big(\xi^{2\1_{0\in I}} + 2^{J_0}\1_{0\in I}\Big)n\natrate_I^2%
    - \frac{(1+\mustar)\log(2)}{2} \sum_{m\in I}J_m\1_{J_m > j_n}\\
  \label{eq:sasthm:57}
  &\leq \Big\{ \frac{c_4(\xi^{2\1_{0\in I}}+2^{J_0}\1_{0\in I})}{2C\Gamma^2} + \frac{c_2}{4} \Big\}n\natrate_I^2%
    - (1+\mustar)\log(2)\sum_{m\in I}J_m\1_{J_m > j_n}.
\end{align}
Also, by \cref{main-pro:1} again,
\begin{align}
  c_12^{J_0}|I|%
  &= c_12^{J_0}\sum_{m\in I}\1_{J_m \leq j_n}%
    + c_12^{J_0}\sum_{m\in I}\1_{J_m > j_n}\\
  \label{eq:sasthm:62}
  &\leq \frac{c_12^{J_0}}{C\Gamma^2\xi^{-2\1_{0\in I}}\log(n)}n\natrate_I^2%
    + \frac{c_12^{J_0}}{j_n} \sum_{m\in I}J_m\1_{J_m> j_n}
\end{align}
So if $\delta > 0$ is taken small enough and the constants $K_1$ and $K_4$ in
\cref{main-ass:1} are taken large enough, we obtain the bound in the statement of the
lemma by combining \cref{eq:sasthm:79a,eq:sasthm:57,eq:sasthm:62}, and because $j_n \gtrsim \log(n)$ can
be made as small as we want in front of $2^{J_0}$ under \cref{main-ass:1}.%

\paragraph*{Case
  $ \frac{(1+\mustar)\log(2)}{2} \sum_{m\in I}J_m\1_{J_m> j_n}> \frac{c_2n\natrate_I^2}{4}$.}

In this case, by taking $\alpha = \frac{1024\cdot c_0 \delta}{1 + 512\cdot c_0\delta}$ in
\cref{main-thm:dr:1}, we obtain that
\begin{align}
  \EE_{\L}\Pi(\consistencyset \cap \sliceset_I \mid \obs)^{1+2\delta}%
  &\leq \frac{3\exp\big(- \alpha c_2  n\natrate_I^2 +   \alpha c_12^{J_0}|I|\big)}{(1 - e^{-c_2 n \natrate_I^2})^{2\alpha}}  \Big\{\frac{\Pi(\mathcal{A}_I)}{\Pi(\LKL_I)} \Big\}^{1-\alpha}\\
  \label{eq:sasthm:79}
  &\leq \frac{3\exp\big( \alpha c_2 n\natrate_I^2 +   \alpha c_12^{J_0}|I|\big)}{(c_2n\natrate_I^2 )^{2\alpha}}  \Big\{\frac{\Pi(\mathcal{A}_I)}{\Pi(\LKL_I)} \Big\}^{1-\alpha},
\end{align}
where the second line follows using that $1 - e^{-x} \geq xe^{-x}$ for $x \geq 0$.
Using the estimate of \cref{main-lem:2} and the \cref{main-pro:1}
\begin{align}
  \label{eq:sasthm:87}
  \log \frac{\Pi(\mathcal{A}_I)}{\Pi(\LKL_I)}%
  &\leq c_4\log(n) \sum_{m\in I}2^{J_0\1_{m=0}}\1_{J_m\leq j_n}%
    - (1+\mustar)\log(2) \sum_{m\in I}J_m\1_{J_m> j_n}\\
  &\leq \frac{c_4(\xi^{2\1_{0\in I}} + 2^{J_0}\1_{0\in I} ) }{C\Gamma^2}n\natrate_I^2%
    - (1+\mustar)\log(2) \sum_{m\in I}J_m\1_{J_m> j_n}.
\end{align}
Hence, for any $t \in (0,1)$, we have,
\begin{multline}
  \label{eq:sasthm:130}
  \log \frac{\Pi(\mathcal{A}_I)}{\Pi(\LKL_I)}%
    \leq \Big\{- \frac{(1-t)c_2}{2} + \frac{c_4(\xi^{2\1_{0\in I}} + 2^{J_0}\1_{0\in I} ) }{C\Gamma^2} \Big\}n\natrate_I^2\\%
      - t (1+\mustar)\log(2) \sum_{m\in I}J_m\1_{J_m> j_n}.
\end{multline}
We also note that for this case to happen, it must be that
$\Set{m \given J_m > j_n} \ne \varnothing$. Since we have assumed that
$I \cap \Set{m \given J_m > \log(n)/\log(2)} = \varnothing$, by \cref{main-pro:1}
\begin{align}
  \label{eq:sasthm:125}
  n \natrate_I^2
  &\geq Cn \gamma^2\sum_{m\in I}2^{-J_m(2\smooth + 1)}\1_{J_m > j_n}
  \geq C \gamma^2 n^{-2\smooth}.
\end{align}
We deduce from the previous that,
\begin{align}
  \frac{1}{(c_2n\natrate_I^2)^{2\alpha}}%
  &\leq%
    \frac{1}{(C c_2\gamma^2)^{2\alpha}}\exp\big\{ 4\alpha \smooth \log(n) \Big\}\\
  \label{eq:sasthm:144}
  &\leq \frac{1}{(C c_2\gamma^2)^{2\alpha}}\exp\Big\{ 4\alpha \smooth \frac{\log(n)}{j_n}\sum_{m\in I}J_m\1_{J_m>j_n} \Big\}.
\end{align}
But $j_n \gtrsim \log(\gamma/\Gamma) + \log(n)$ by \cref{main-eq:44}, which implies that
$j_n \gtrsim \log(n)$ when $K_1$ in \cref{main-ass:1} is taken sufficiently large. So
taking in addition $K_3$ sufficiently large, we find that there is a universal
constant $C' > 0$ such that
\begin{align}
  \label{eq:sasthm:145}
  \frac{1}{(c_2n\natrate_I^2)^{2\alpha}}%
  &\leq \exp\Big\{ C'\delta \log(2) \sum_{m\in I}J_m\1_{J_m>j_n} \Big\}.
\end{align}
So if $\delta > 0$ is taken small enough, and the constants $K_1$, $K_3$ and $K_4$ in
\cref{main-ass:1} are taken large enough, we obtain the bound in the statement of the
lemma by combining \cref{eq:sasthm:62,eq:sasthm:79,eq:sasthm:130,eq:sasthm:145} and by taking
$t = \frac{(1+2\delta)(1+\mustar/2 + C'\delta)}{1+\mustar}$.
\end{proof}

\section*{Acknowledgements}

\par This work was supported by U.S. Air Force Office of Scientific Research
grant \#FA9550-15-1-0074. The author also thanks Daniel M. Roy for helpful
discussions and the opportunity to work on this project.


  \putbib%
  \stopcontents[supp]%
\end{bibunit}


\end{document}